\newtheorem{theorem}{Theorem}
\newtheorem{proposition}{Proposition}
\newtheorem{lemma}{Lemma}
\theoremstyle{definition}
\newtheorem{remark}{Remark}
\newcommand{\bdm}{\begin{displaymath}}
\newcommand{\edm}{\end{displaymath}}
\newcommand{\bq}{\begin{equation}}
\newcommand{\eq}{\end{equation}}
\newcommand{\bqn}{\begin{equation*}}
\newcommand{\eqn}{\end{equation*}}
\newcommand{\rn}{\mathbb{R}^n}
\newcommand{\eps}{\varepsilon}
\newcommand{\phw}{\tilde \psi^{wk}}
\newcommand{\Sing}{\mathrm{Sing}\,}
\newcommand{\Reg}{\mathrm{Reg}\,}
\newcommand{\norm}[1]{\left\| #1 \right\|}
\newcommand{\mklm}[1]{\left\{ #1 \right\}}
\newcommand{\eklm}[1]{\left\langle #1 \right\rangle}
\renewcommand{\d}{\,d}
\newcommand{\N}{{\mathbb N}}
\newcommand{\R}{{\mathbb R}}
\newcommand{\T}{{\rm T}}
\renewcommand{\epsilon}{\varepsilon}
\renewcommand{\phi}{\varphi}
\renewcommand{\rho}{\varrho}
\newcommand{\Cinft}{{\rm C^{\infty}}}
\newcommand{\CT}{{\rm C^{\infty}_c}}
\newcommand{\g}{{\bf \mathfrak g}}
\newcommand{\id}{\mathrm{id}\,}
\renewcommand{\det}{\mathrm{det}\,}
\newcommand{\vol}{\text{vol}\,}
\newcommand{\Crit}{\mathrm{Crit}}
\DeclareMathOperator{\supp}{supp}
\DeclareMathOperator{\tr}{tr}
\DeclareMathOperator{\gd}{\partial}
\begin{document}

\author{Pablo Ramacher}
\title[Singular equivariant asymptotics and the moment map I]{Singular equivariant asymptotics and the moment map I  } 
\address{Pablo Ramacher, Georg-August-Universit\"at G\"ottingen, Institut f\"ur Mathematik, Bunsenstr. 3-5, 37073 G\"ottingen, Germany}
\email{ramacher@uni-math.gwdg.de}
\thanks{This research was financed by the grant RA 1370/2-1 of the German Research Foundation (DFG)}

\maketitle

\setcounter{tocdepth}{1}
\tableofcontents

\section{Introduction}

This is the first of a series of papers dealing with the asymptotic behavior of certain integrals occuring in the description of the spectrum of an invariant elliptic operator on a compact Riemannian manifold $M$ carrying the action of a compact, connected Lie group of  isometries $G$, and in the study of its equivariant cohomology via the moment map  $\mathbb{J}:T^\ast M \rightarrow \g^\ast$, where $T^\ast M$ and $\g$ denote the cotangent bundle of $M$ and the Lie algebra of $G$, respectively. In the latter context \cite{duistermaat-heckman, atiyah-bott84, witten92, berline-getzler-vergne},  the mentioned integrals are of the form
\bqn 
I(\mu)=\int_{T^\ast M\times \g} e^{i\mathbb{J}(m)(X)/\mu} a(m,X) \d m \d X,  \qquad \mu \to 0^+,
\eqn
where $dm$ is a density on $T^\ast M$, $dX$ is the Lebesgue measure in $\g$, and $a \in \CT(T^\ast M \times \g)$ is an amplitude. While asymptotics for $I(\mu)$ have been obtained for free group actions, one meets with serious difficulties when singular orbits are present. The reason is that, when trying to examine these integrals via the generalized stationary phase theorem in the case of general effective actions, the critical set of the phase function $\mathbb{J}(m)(X)$ is no longer a smooth manifold, so that, a priori, the principle of the stationary phase can not be applied in this case. Nevertheless, in what follows, we shall show how to circumvent this obstacle by partially resolving the singularities of the critical set of $\mathbb{J}(m)(X)$, and in this way obtain asymptotics for $I(\mu)$   with  remainder estimates  in the case of singular group actions. We shall restrict ourselves first to  orthogonal group actions in $\rn$, while the global theory, together with the applications, shall be treated in a second paper.

\section{Compact group actions and the moment map}

From now on let $G$ be  a compact, connected Lie group  with Lie algebra $\g$ acting orthogonally on Euclidean space $\rn$. Note that any finite-dimensional, separable metric $G'$-space with only finitely many orbit types, where $G'$ is a compact Lie group, may be embedded equivariantly in an orthogonal action of $G'$ on some Euclidean space, see Bredon  \cite{bredon}, Section II.10. The considered type of group actions is therefore already quite general. Consider now the cotangent space $T^\ast \rn\simeq \R^{2n}$, endowed  with the canonical coordinates $(x_1,\dots, x_n, \xi_1,\dots \xi_n)$. It constitutes a symplectic manifold whose symplectic form is given by 
\bqn 
\omega= d\theta =\sum _{i=1}^n \d \xi_i  \wedge \, \d x_i,
\eqn
where $\theta=\sum \xi_i \d x_i$ is the Liouville form. The group $G$ acts on $\T^\ast \rn$ by $g (x,\xi)=(g \, x, g\, \xi)$ in a Hamiltonian way, and if we denote by $\tilde X$ the fundamental vector field generated by an element $X$ of $\g$, the corresponding moment map is given by 
\bqn
\mathbb{J}:T^\ast\rn\simeq {\rn}\times \rn\to \g^\ast,  \quad \mathbb{J}(x,\xi)(X)=\theta(\tilde X)(x,\xi)=\eklm{ X x, \xi} ,
\eqn
where $\eklm{\cdot,\cdot}$ is the Euclidean inner product in $\rn$. We are interested in the asymptotic behavior of integrals of the form 
\bq
\label{int}
I(\mu)=   \int _{T^\ast \rn}  \int_{\g} e^{i \psi( x,\xi,X)/\mu }   a(x,\xi,X)  \d X \d\xi \d x , \qquad \mu \to 0^+,   
\eq 
where $dxd\xi$, and $dX$ are Lebesgue measures in $T^\ast \rn$,  and $\g$, respectively, $a \in \CT( T^\ast \rn \times \g)$, and
\bqn 
\psi(x,\xi,X) = \mathbb{J}(x,\xi)(X)= \eklm{ X x,\xi}.
\eqn 
We would like to  study the integrals $I(\mu)$ by means of the generalized stationary phase theorem, and for this we have to consider the critical set of the phase function $\psi(x,\xi,X)$ given by
 \begin{align*}
 \Crit(\psi)&=\mklm{ (x,\xi,X) \in {T^\ast \rn} \times \g: \psi_\ast  (x,\xi,X) =0}=\mklm{(x,\xi,X)\in \Omega \times \g: X \in \g_{(x,\xi)}},
\end{align*}
    where
\bqn
  \Omega=\mathbb{J}^{-1}(0)= \mklm{(x,\xi) \in {T^\ast \rn}: \eklm {Ax,\xi}=0 \text{ for all } A \in \g}
\eqn
represents the zero level of the moment map, and 
\bqn 
\g_{(x,\xi)} = \mklm{ X\in \g: Xx=0, \quad X\xi=0}
\eqn
denotes the Lie algebra of the isotropy group $G_{(x,\xi)}$ statilizing the point $(x,\xi)$. Now, the major difficulty in applying the generalized stationary phase theorem in our setting  stems from the fact that, due to the singular orbit structure of the underlying group action,  the zero level $\Omega$ of the moment map, and, consequently, the considered critical set $\Crit(\psi)$, are in general singular varieties. In fact,  if the $G$-action on $T^\ast\rn$ is not free, the considered moment map is no longer a submersion, so that $\Omega$ and the symplectic quotient $\Omega/G$ are not smooth anymore. Nevertheless, it can be shown that these spaces have Whitney stratifications into smooth submanifolds, see \cite{lerman-sjamaar}, and 
Ortega-Ratiu \cite{ortega-ratiu}, Theorems 8.3.1 and 8.3.2, which correspond to the stratifications of $T^\ast\rn$, and $\rn$ by orbit types, see Duistermaat-Kolk \cite{duistermaat-kolk}. In particular, $\Omega$ has a principal stratum given by
\bqn
\mathrm{Reg} \, \Omega =\mklm{ (x,\xi) \in \Omega: G_{(x,\xi)} \text{ is of principal type}},
\eqn
which is an open and dense subset of $\Omega$, see Cassanas-Ramacher \cite{cassanas-ramacher}, Proposition 2. In addition, $\mathrm{Reg} \,\Omega$ is a smooth submanifold in $\R^{2n}$ of codimension equal to the dimension $\kappa$ of a principal orbit. It is then clear that the smooth part of $\Crit(\psi)$ is given by 
\bqn
\mathrm{Reg}\,  \Crit(\psi)=\mklm{(x,\xi,X)\in \mathrm{Reg}\, \Omega \times \g: X \in \g_{(x,\xi)}},
\eqn
and constitutes a submanifold of codimension $2\kappa$.
To obtain an asymptotic description of $I(\mu)$, we shall partially resolve the singularities of $\Crit(\psi)$, for which we will need a suitable $G$-invariant covering of $\rn$. More generally, and following Kawakubo \cite{kawakubo}, Theorem 4.20,  we shall construct such a covering for an arbitrary compact Riemannian $K$-manifold $M$, where $K$ is a compact, connected Lie group of isometries. Thus, let $(H_1), \dots (H_L)$ denote the isotropy types of $M$, and  arrange them in such a way that 
\bqn
H_j \text{ is conjugated to a subgroup of }H_i  \quad \Rightarrow \quad i \leq j.
\eqn
Let $H\subset K$ be a closed subgroup, and $M(H)$ the union of all orbits of type $K/H$. Then $M$ has a stratification into orbit types according to  
\bqn
M=M(H_1) \cup \dots \cup M(H_L).
\eqn
By the principal orbit theorem, the set $M(H_L)$ is open and dense in $M$, while $M(H_1)$ is a closed, $K$-invariant submanifold. Denote by $\nu_1$ the normal $K$-vector bundle of $M(H_1)$, and by $f_1: \nu_1 \rightarrow M$ a $K$-invariant tubular neighbourhood of $M(H_1)$ in $M$. Take a $K$-invariant metric on $\nu_1$, and put
\bqn
{D}_t(\nu_1)=\mklm {v \in \nu_1: \norm{v} \leq t }, \qquad t >0.
\eqn
We then define the compact, $K$-invariant submanifold with boundary
\bqn
M_2=M - f_1(\stackrel{\circ}{D}_{1/2}(\nu_1)), 
\eqn
on which the isotropy type $(H_1)$ no longer occurs, and endow it with a $K$-invariant Riemannian metric with product form in a $K$-invariant collar neighborhood of $\gd M_2$ in $M_2$. Consider now the union $M_2(H_2)$ of orbits in $M_2$ of type $K/H_2$, a compact $K$-invariant submanifold of $M_2$ with boundary, and let $f_2:\nu_2 \rightarrow M_2$ be a $K$-invariant tubular neighbourhood  of $M_2(H_2)$ in $M_2$, which exists due to the particular form of the metric on $M_2$. Taking a $K$-invariant metric on $\nu_2$, we define
\bqn
M_3=M_2 - f_2(\stackrel{\circ}{D}_{1/2}(\nu_2)), 
\eqn
which constitutes a compact $K$-invariant submanifold with corners and isotropy types $(H_3), \dots (H_L)$. Continuing this way, one finally obtains for $M$ the decomposition 
\bqn  
M= f_1({D}_{1/2}(\nu_1)) \cup \dots  \cup f_L({D}_{1/2}(\nu_L)),
\eqn
where we identified  $f_L({D}_{1/2}(\nu_L))$ with $M_L$, which leads to the covering 
\bqn
M= f_1(\stackrel{\circ}{D}_{1}(\nu_1)) \cup \dots \cup f_L(\stackrel{\circ}{D}_{1}(\nu_L)),\qquad  f_L(\stackrel{\circ}{D}_{1}(\nu_L))=\stackrel{\circ} M_L.
\eqn
 In exactly the same way, one shows the existence of a covering 
\bqn
\label{covrn}
\rn= f_1(\stackrel{\circ}{D}_{1}(\nu_1)) \cup \dots \cup  f_L(\stackrel{\circ}{D}_{1}(\nu_L))
\eqn
of $\rn$ by $G$-invariant tubular neighbourhoods, where  $ f_L(\stackrel{\circ}{D}_{1}(\nu_L))\equiv \stackrel{\circ} \rn_L$, the notation being as before. 

\section{The desingularization process}

Let us now start resolving the singularities of the critical set $\Crit(\psi)$.  For this, we will have to set up an iterative desingularization process along the strata of the underlying $G$-action, where each step in our iteration will consist of a decomposition, a monoidal transformation, and a reduction. For simplicity, we shall assume that at each iteration step the set of maximally singular orbits is connected. Otherwise each of the connected components, which might even have different dimensions,  has to be treated separately. 

\subsection*{First decomposition} As before, let $f_k:\nu_k\rightarrow M_k$ be an invariant tubular neighborhood of $M_k(H_k)$ in 
\bdm
M_k=\rn-\bigcup_{i=1}^{k-1} f_i(\stackrel{\circ}{D}_{1/2}(\nu_i)),
\edm 
a manifold with corners on which $G$ acts with the isotropy types $(H_k), (H_{k+1}), \dots, (H_L)$, and put  $W_k=f_k(\stackrel{\circ}{D_1}(\nu_k))$. Introduce a partion of unity $\mklm{\chi_k}_{k=1,\dots,L}$ subordinated to the covering $\mklm{W_k}$, and define 
\bq
\label{intj}
I_k(\mu)=   \int _{T^\ast \rn}  \int_{\g} e^{i \psi( x,\xi,X)/\mu }   a(x,\xi,X)   \chi_k(x) \d X \d\xi \d x ,   
\eq
so that $I(\mu)=I_1(\mu)+\dots +I_L(\mu)$. Now, if $(x,\xi) \in \Omega$, and either $x$ or $\xi$ belong to $\rn(H_L)$,  by Cassanas-Ramacher \cite{cassanas-ramacher}, Proposition 2, it follows already that $(x,\xi) \in \mathrm{Reg}\, \Omega$. The critical set of $\psi$ is therefore a smooth manifold in a neighborhood of $ \supp \chi_L a$, since $ f_L(\stackrel{\circ}{D}_{1}(\nu_L)) \subset \rn(H_L)$. Furthermore, it is clear that the transversal Hessian of $\psi$ is non-degenerate on $\mathrm{Reg}\,  \Crit(\psi)$. For this reason, the stationary phase theorem can directly be applied to compute the integral $I_L(\mu)$.  Let us therefore turn to the case that $k \in \mklm{1, \dots, L-1}$. The sets 
\begin{align*}
  \Omega_k&=\mklm{(x,\xi) \in {W_k}\times\rn: \eklm {Ax,\xi}=0 \text{ for all } A \in \g}, \\ \Crit_k(\psi) &=\mklm{(x,\xi,X)\in \Omega_k \times \g: X\in \g_{(x,\xi)} }
\end{align*}
are then no longer smooth manifolds, and since $\supp \chi_k \subset W_k$, the stationary phase theorem can not be applied directly in this situation. Instead, we shall resolve the singularities of $\Crit_k(\psi)$, and after this  apply the principle of the stationary phase in a suitable resolution space. For this, introduce for each $p^{(k)}\in M_k(H_k)$ the decomposition
\bqn
\g=\g_{p^{(k)}}\oplus \g_{p^{(k)}}^\perp, 
\eqn
where $\g_{p^{(k)}}$ denotes the Lie algebra of stabilizer $G_{p^{(k)}}$ of $p^{(k)}$, and $\g_{p^{(k)}}^\perp$ its orthogonal complement with respect to the scalar product $\tr (^tAB)$ in $\g$. Let further $A_1(p^{(k)}), \dots, A_{d^{(k)}}(p^{(k)})$ be an orthonormal basis of $\g_{p^{(k)}}^\perp$, and $B_1(p^{(k)}),\dots, B_{e^{(k)}}(p^{(k)})$ an orthonormal basis of $\g_{p^{(k)}}$. Consider  the isotropy algebra bundle over $M_k(H_k)$
\bqn
\mathfrak{iso} \,M_k(H_k) \rightarrow M_k(H_k),
\eqn
as well as the canonical projection
\bqn 
\pi_k: W_k \rightarrow M_k(H_k), \qquad f_k(p^{(k)},v^{(k)}) \mapsto p^{(k)}, \qquad p^{(k)} \in M_k(H_k), \, v^{(k)} \in (\nu_k)_{p^{(k)}},
\eqn
where $f_k(p^{(k)},v^{(k)})=(\exp_{p^{(k)}} \circ \gamma ^{(k)})( v^{(k)})$, and $\gamma^{(k)}$ is some scaling function, see \cite{bredon}, page 306-307. We then consider the induced bundle
\bqn
\pi_k^\ast \frak{iso}\,  M_k(H_k)=\mklm {(f_k(p^{(k)},v^{(k)}),X)\in W_k \times \g: X \in \g_{p^{(k)}}},
\eqn
and denote by  
$$\Pi_k: W_k \times \g \rightarrow  \pi_k^\ast \frak{iso} \, M_k(H_k)$$
the canonical projection which is obtained by considering geodesic normal coordinates around $\pi_k^\ast \, \frak{iso} M_k(H_k)$, and  identifying  $W_k\times \g$ with a neighborhood of the zero section in  the normal bundle $N\, \pi_k^\ast \,\frak{iso} \, M_k(H_k)$. Note also that 
 the fiber of the normal bundle to $\pi^\ast \frak{iso}\,  M_k(H_k)$ at a point $(f_k(p^{(k)},v^{(k)}),X)$ can be identified with $\g_{p^{(k)}}^\perp$. Integrating along the fibers of the normal bundle to  $\pi_k^\ast \, \frak{iso} M_k(H_k)$ we therefore obtain  for $I_k(\mu)$ the expression
\begin{align*}
 I_k(\mu)&=\int_{\pi_k^\ast \, \frak{iso} M_k(H_k)} \left [\int_{\Pi_k^{-1}(p^{(k)},v^{(k)},B^{(k)})\times \rn
} e^{i\psi/\mu}\chi_k a \, \Phi_k \, \d \xi \, dA^{(k)}   \right ]  dB^{(k)} \,  dv^{(k)}  dp^{(k)} \\
 &=\int_{ M_k(H_k) }\left [\int_{ \g \times \pi_k^{-1}(p^{(k)})\times\rn
} e^{i\psi/\mu}\chi_k a \, \Phi_k \, \d \xi \, dA^{(k)} \, dB^{(k)} \, dv^{(k)}  \right ]      dp^{(k)},
\end{align*}
where 
\bqn
\gamma^{(k)} \big (\stackrel \circ D_1(\nu_k)_{p^{(k)}}\big )  \times  \g_{p^{(k)}}^\perp \times   \g_{p^{(k)}} \ni (v^{(k)},  A^{(k)},B^{(k)})\mapsto (\exp_{p^{(k)}} v^{(k)},A^{(k)}+B^{(k)})=(x,X)
\eqn
are coordinates on $\g \times \pi_k^{-1}(p^{(k)})$, while $dp^{(k)}$, and $dA^{(k)}, dB^{(k)} ,  dv^{(k)} $ are suitable measures in   $M_k(H_k)$, and  $\g_{p^{(k)}}^\perp$, $\g_{p^{(k)}}$, $\stackrel \circ D_1(\nu_k)_{p^{(k)}}$, respectively, such that $ \d X \d x\equiv\Phi_k \,  dA^{(k)} \, dB^{(k)} \,  dv^{(k)} \, dp^{(k)}$. 

\subsection*{First monoidal transformation}  Let now $k \in \mklm{1, \dots, L-1} $ be fixed. For the further analysis of the integral $I_k(\mu)$, we shall sucessively resolve the singularities of $\Crit_k(\psi)$, until we are in position to apply the principle of the stationary phase in a suitable resolution space. To begin with, we perform a monoidal transformation 
\bqn 
\zeta_k: B_{Z_k}( W_k \times \g) \longrightarrow W_k \times \g
\eqn
 in $W_k \times \g$ with center $Z_k= \frak{iso} \, M_k(H_k)$. For this, let us write  $ A^{(k)}(p^{(k)},\alpha^{(k)})=\sum  \alpha_i^{(k)} A_i^{(k)}(p^{(k)})$,  $ B^{(k)}(p^{(k)},\beta^{(k)})=\sum  \beta_i^{(k)} B_i^{(k)}(p^{(k)})$, and 
\bqn
 v^{(k)}(p^{(k)},\theta^{(k)})= \sum _{i=1}^{c^{(k)}} \theta_i^{(k)} v_i^{(k)}(p^{(k)}),
\eqn
where $\{v_1^{(k)}(p^{(k)}),\dots  ,v_{c^{(k)}}^{(k)}(p^{(k)})  \}$ denotes an orthonormal frame in $\nu_k$. With respect to these coordinates we have $Z_k=\mklm{\alpha^{(k)}=0, \, \theta^{(k)}=0}$, so that 
\begin{gather*}
B_{Z_k}( W_k \times \g)=\mklm{ (x,X,[t]) \in W_k \times \g \times \mathbb{RP}^{c^{(k)} + d^{(k)}-1}:  \theta^{(k)}_i t_j = \theta^{(k)}_j t_i, \, \alpha^{(k)}_i t_{c^{(k)}+j}=\alpha^{(k)}_j t_{c^{(k)}+i}  },\\
\zeta_k: (x,X,[t]) \longmapsto (x,X).
\end{gather*}
If we now cover $B_{Z_k}( W_k \times \g)$ with the charts $\mklm{(\phi_\rho, U_\rho)}$,  $U_\rho=B_{Z_k}( W_k \times \g)\cap (W_k \times \g \times V_\rho)$, where $V_\rho=\mklm{[t] \in  \mathbb{RP}^{c^{(k)} + d^{(k)}-1}: t_\rho\not=0}$, we obtain for $\zeta_k$ in each of the $\theta^{(k)}$-charts $\mklm{U_\rho}_{1\leq \rho\leq c^{(k)}}$ the expressions
\bqn
\, ^\rho \zeta_k=\zeta_k \circ \phi_\rho: ( p^{(k)},\tau_k,\, ^\rho \tilde v^{(k)},  A^{(k)}, B^{(k)}) \mapsto (\exp_{p^{(k)}}   \tau_k  \,^\rho\tilde v^{(k)}, \tau_k A^{(k)}+ B^{(k)})\equiv(x,X),
\eqn
where 
\bqn
 \,^\rho \tilde v^{(k)}(p^{(k)},\theta^{(k)})= \Big (v_\rho^{(k)}(p^{(k)})+ \sum _{i\not=\rho}^{c^{(k)}} \theta_i^{(k)} v_i^{(k)}(p^{(k)})\Big ) \Big / \sqrt{1 + \sum_{i\not=\rho} (\theta_i^{(k)} )^2} \in (\,^\rho S_k^+)_{p^{(k)}},
\eqn
and 
$$\,^\rho S_k^+=\mklm{v \in \nu_k: v = \sum s_i v_i, s_\rho>0,  \norm{v}=1}.$$
  Note that for each $1 \leq \rho\leq c^{(k)}$, $$W_k \simeq f_k(\,^\rho S_k^+ \times (-1,1))$$ up to a set of measure zero, and since 
  $f_k(p^{(k)}, v^{(k)})=(\exp_{p^{(k)}} \circ \gamma^{(k)})(v^{(k)})$, we have $\tau_k \in (-T,T)$ for some $1>T>0$.  As a consequence, we obtain for the phase function the factorization
\begin{align*}
\psi(x,\xi,X)&=\eklm{\big (\tau_k A^{(k)}+ B^{(k)}\big )\exp_{p^{(k)}} \tau_k\, ^\rho \tilde v^{(k)},\xi}\\ & =\tau_k \left [ \eklm{A^{(k)} p^{(k)} + B^{(k)} \,^\rho \tilde v^{(k)}, \xi} +\tau_k \eklm {A^{(k)}  \,^\rho \tilde v^{(k)},\xi }\right ],
\end{align*} 
where we took into account that $\exp_{p^{(k)}} v^{(k)} = p^{(k)}+v^{(k)}$. Similar considerations hold for $\zeta_k$ in the $\alpha^{(k)}$-charts $\mklm{U_\rho}_{c^{(k)}+1 \leq \rho \leq c^{(k)}+d^{(k)}}$, so that we get
\bqn 
 \psi \circ (\id_\xi \otimes \zeta_k) =  \,^{(k)} \tilde \psi^{tot}=\tau_k \cdot  \,  ^{(k)} \phw, 
 \eqn
$ \,^{(k)} \tilde \psi^{tot}$ and $  \,  ^{(k)} \phw $ being the \emph{total} and \emph{weak transform} of the phase function $\psi$, respectively.\footnote{For an explanation of this notation, see section \ref{sec:6}.} Introducing a partition $\mklm{u_\rho}$ of unity subordinated to the covering $\mklm{U_\rho}$ now yields 
\bqn 
I_k(\mu)=\sum_{\rho=1} ^{c^{(k)}} \, ^\rho I_k(\mu)+\sum_{\rho=c^{(k)}+1} ^{d^{(k)}} \, ^\rho \tilde I_k(\mu),
\eqn 
where the integrals  $ ^\rho I_k(\mu)$ and $ ^\rho \tilde I_k(\mu)$ are given by the expressions
\begin{gather*}
\int_{ M_k(H_k) }\left [\int_{\, ^\rho \zeta^{-1}_k( \g \times \pi_k^{-1}(p^{(k)}))\times\rn
}  (u_\rho \circ \phi_\rho) \, (\,^\rho\zeta_k )^\ast (  e^{i\psi /\mu }\chi_k a \,  \Phi_k  \,  d \xi  \, dA^{(k)} \, dB^{(k)} \, d v^{(k)} )  \d \xi \,  \right ]      dp^{(k)}.
\end{gather*}
As we shall see in Section \ref{sec:8}, the weak transform $\, ^{(k)} \phw$ has no critical points in the $\alpha^{(k)}$-charts, which will imply that the integrals $ ^\rho \tilde I_k(\mu)$ contribute to $I(\mu)$ only with higher order terms. In what follows, we shall therefore restrict ourselves to the situation where $\chi_k a   \circ ( \id_\xi \otimes \zeta_k)$ has compact support in one of the $\theta^{(k)}$-charts. Thus we can assume $I_k(\mu)$ to be given by  
\begin{gather*}
\int_{ M_k(H_k) }\left [\int_{\zeta^{-1}_k( \g \times \pi_k^{-1}(p^{(k)}))\times\rn
} e^{i\frac{\tau_k}\mu  \,  ^{(k)} \phw}(\chi_k a\circ (\id_\xi \otimes \zeta_k) ) \, \tilde \Phi_k \, \d \xi \, dA^{(k)} \, dB^{(k)} \, d\tilde v^{(k)} \d \tau_k \right ]      dp^{(k)} \\
=\int_{ M_k(H_k) \times (-T,T)  }\Big [\int_{ (S_k^+)_{p^{(k)}} \times  \g_{p^{(k)}} \times \g_{p^{(k)}}^\perp \times\rn
} e^{i\frac{\tau_k}\mu \,  ^{(k)} \phw}(\chi_k a\circ (\id_\xi \otimes \zeta_k) ) \, \tilde \Phi_k \\   \d \xi \, dA^{(k)} \, dB^{(k)} \, d\tilde v^{(k)}   \Big ]  \d \tau_k \,     \d p^{(k)},
\end{gather*}
 where we skipped the index $\rho$, and  took into account that 
\bqn 
\zeta^{-1}_k( \g \times \pi_k^{-1}(p^{(k)}))=\{p^{(k)}\} \times (-T,T) \times (S_k^+)_{p^{(k)}} \times  \g_{p^{(k)}} \times \g_{p^{(k)}}^\perp.
\eqn
Here $d\tilde v^{(k)}$ is a suitable measure on $(S_k^+)_{p^{(k)}}$ such that $\d X \d x \equiv \tilde \Phi_k \, d A^{(k)} \, dB^{(k)} \,  d\tilde v^{(k)} \, d\tau_k \, dp^{(k)}$. Furthermore, a computation shows that  
\bqn 
\tilde \Phi_k  = |\tau_k|^{c^{(k)}+d^{(k)}-1} \, \Phi_k\circ \zeta_k. 
\eqn

\subsection*{First reduction} Let us now assume that there exists a $x \in W_k$ with orbit type $G/H_j$, and let  $p^{(k)} \in M_k(H_k), v^{(k)} \in (\nu_k)_{p^{(k)}}$ be such that $x=f_k(p^{(k)},v^{(k)})$. Since x lies in a slice at $p$ around the $G$-orbit of $p^{(k)}$, we have $G_x \subset G_{p^{(k)}}$ by Bredon \cite{bredon}, page 86. Hence, $H_j\simeq G_x$ must be conjugated to a subgroup of $H_k\simeq G_{p^{(k)}}$. Now, $G$ acts on $M_k$ with the isotropy types $(H_k),(H_{k+1}), \dots, (H_L)$. The isotropy types occuring in $W_k$ are therefore those for which the corresponding isotropy groups  $H_k,H_{k+1}, \dots, H_L$ are conjugated to a subgroup of $H_k$, and we shall denote them by
\bqn
(H_k) = (H_{i_1}), (H_{i_2}), \dots, (H_L).
\eqn
Consequently, $G$ acts on $S_k$ with the isotropy types $(H_{i_2}), \dots, (H_L)$. Now, for every $p^{(k)}\in M_k(H_k)$, $ (\nu_k)_{p^{(k)}}$ is an orthogonal $ G_{p^{(k)}}$-space; furthermore,  by the invariant tubular neighborhood theorem, one has the isomorphism
\bqn
W_k/G \simeq (\nu_k)_{p^{(k)}}/ G_{p^{(k)}}.
\eqn
Therefore $G_{p^{(k)}}$ acts on the manifold $(S_k)_{p^{(k)}}$ with the  isotropy types $(H_{i_2}), \dots, (H_L)$ as well.
As will turn out, if $G$ acted on $S_k$ only with type $(H_L)$, the critical set of $^{(k)} \phw$ would be clean in the sense of Bott, and we could proceed to apply the stationary phase theorem to compute $I_k(\mu)$. But in general this will not be the case, and we are forced to continue with the iteration.

\subsection*{Second decomposition}
For every fixed  $p^{(k)}\in M_k(H_k)$ consider the covering of the compact $G_{p^{(k)}}$-manifold  $(S_k)_{p^{(k)}}$ given by 
\bqn
(S_k)_{p^{(k)}} =W_{ki_2} \cup \dots \cup W_{kL}, \qquad W_{ki_j}= f_{ki_j}(\stackrel \circ D_1(\nu_{ki_j})), \quad W_{kL}=\mathrm{Int} ( (S_k)_{p^{(k)},L}),
\eqn
where $f_{ki_j}:\nu_{ki_j} \rightarrow (S_k)_{p^{(k)},i_j}$ is an invariant tubular neighborhood of $(S_k)_{p^{(k)},i_j}(H_{i_j})$ in 
\bqn 
(S_k)_{p^{(k)},i_j}=(S_k)_{p^{(k)}} - \bigcup_{r=2}^{j-1} f_{ki_r}(\stackrel \circ D_{1/2}(\nu_{ki_r})),  \qquad j\geq 2, 
\eqn
and $f_{ki_j}(p^{(i_j)},v^{(i_j)})=(\exp_{p^{(i_j)}} \circ \gamma^{(i_j)})(v^{(i_j)})$, $p^{(i_j)} \in  (S_k)_{p^{(k)},i_j}(H_{i_j})$, $ v ^{(i_j)} \in ( \nu_{ki_j}) _{p^{(i_j)}}$. 
Let further $\{\chi_{ki_j}\}$ denote a partition of the unity subordinated to the covering $\mklm{W_{ki_j}}$,  and define
\begin{align*}
I_{ki_j}(\mu) =&\int_{ M_k(H_k)\times (-T,T) }\Big [\int_{ (S_k^+)_{p^{(k)}} \times  \g_{p^{(k)}} \times \g_{p^{(k)}}^\perp \times\rn
} e^{i\frac{\tau_k}\mu \,  ^{(k)} \phw}(\chi_k a\circ (\id_\xi \otimes \zeta_k) ) \, \chi_{ki_j} \, \tilde \Phi_k \\ &  \d \xi \, dA^{(k)} \, dB^{(k)} \, d\tilde v^{(k)}  \Big ]  \d \tau_k \d p^{(k)},
\end{align*}
so that $I_k(\mu)= I_{ki_2}(\mu) + \dots + I_{kL}(\mu)$. It is important to note that the partition functions $\chi_{ki_j}$ depend smoothly on $p^{(k)}$ as a consequence of the tubular neighborhood theorem, by which in particular $S_k /G \simeq (S_k)_{p^{(k)}}/G_{p^{(k)}}$, and the smooth dependence in $p^{(k)}$ of the Riemannian metrics on the normal bundles $\nu_{ki_j}$ and the manifolds with corners $(S_k)_{p^{(k)}, i_j}$.
Since $G_{p^{(k)}}$ acts on $W_{kL}$ only with type $(H_L)$, the iteration process for $I_{kL}(\mu)$ ends here. For the remaining integrals $I_{ki_j}(\mu)$ with $k < i_j < L$, let us denote by
\bqn
\frak{iso} \,(S_k)_{p^{(k)},i_j}(H_{i_j}) \rightarrow (S_k)_{p^{(k)},i_j}(H_{i_j})
\eqn
the isotropy algebra bundle over $(S_k)_{p^{(k)},i_j}(H_{i_j})$, and by $\pi_{ki_j}: W_{ki_j} \rightarrow (S_k)_{p^{(k)},i_j}(H_{i_j})$ the canonical projection.  For $p^{(i_j)} \in (S_k)_{p^{(k)},i_j}(H_{i_j})$, consider the decomposition
\bqn
\g = \g_{p^{(k)}} \oplus \g_{p^{(k)}}^\perp =(\g_{p^{(i_j)}}\oplus \g_{p^{(i_j)}}^\perp) \oplus \g_{p^{(k)}}^\perp.
\eqn
Let further $A_1^{(i_j)}, \dots ,A_{d^{(i_j)}}^{(i_j)}$ be an orthonormal frame in $ \g_{p^{(i_j)}}^\perp$,  as well as $B_1^{(i_j)}, \dots ,B_{e^{(i_j)}}^{(i_j)}$ be an orthonormal frame in $ \g_{p^{(i_j)}}$, and $v_1^{(ki_j)}, \dots, v_{c^{(i_j)}}^{(ki_j)}$ an orthonormal frame in $(\nu_{ki_j}) _{p^{(i_j)}}$. Integrating along the fibers in a neighborhood of $\pi_{ki_j}^\ast \frak{iso} \, (S_k)_{p^{(k)},i_j}(H_{i_j}) \subset W_{ki_j} \times \g_{p^{(k)}}$ then yields  for $ I_{ki_j}(\mu)$ the expression
\begin{align*}
I_{ki_j}(\mu) &=  \int_{M_k(H_k)\times (-T,T)} \Big [ \int_{(S_k)_{p^{(k)},i_j}(H_{i_j})} \Big [ \int_{\pi_{ki_j}^{-1} (p^{(i_j)})\times \g_{p^{(k)}}\times \g_{p^{(k)}}^\perp \times  \rn} e^{i\frac{\tau_k}\mu \, ^{(k)} \phw }  \\ & \times  (\chi_k a  \circ (\id_\xi \otimes \zeta_k) ) \chi_{ki_j} \,   \Phi_{ki_j} \, \d \xi  \, d A^{(k)} \, d A^{(i_j)} \, dB^{(i_j)} \,  dv^{(i_j)}     \big ] dp^{(i_j)}  \Big ]   d\tau_k dp^{(k)},
\end{align*}
where $\Phi_{ki_j}$ is a Jacobian, and 
\bqn
\gamma^{(i_j)}\big (  \stackrel \circ D_1(\nu_{ki_j})_{p^{(i_j)}}\big )  \times \g_{p^{(i_j)}}^\perp \times   \g_{p^{(i_j)}} \ni (v^{(i_j)}, A^{(i_j)},B^{(i_j)})\mapsto (\exp_{p^{(i_j)}} v^{(i_j)},A^{(i_j)} + B^{(i_j)})=(\tilde v^{(k)},B^{(k)})
\eqn
are coordinates on $ \pi_{ki_j}^{-1}(p^{(i_j)})\times \g_{p^{(k)}}$, while $dp^{(i_j)}$, and $dA^{(i_j)}, dB^{(i_j)} ,  dv^{(i_j)} $ are suitable measures in the spaces   $ (S_k)_{p^{(k)},i_j}(H_{i_j})$, and  $\g_{p^{(i_j)}}^\perp$, $\g_{p^{(i_j)}}$, $\stackrel \circ D_1(\nu_{ki_j})_{p^{(i_j)}}$, respectively, such that we have the equality $ \tilde \Phi_k \d B^{(k)} \d \tilde v^{(k)}\equiv\Phi_{ki_j} \,  dA^{(i_j)} \, dB^{(i_j)} \,  dv^{(i_j)} \, dp^{(i_j)}$.

\subsection*{Second monoidal transformation}

Let us fix an $l$ such that $k< l < L$, and consider in the $\theta^{(k)}$-chart $(-T,T)\times S_k^+ \times \g$ a monoidal transformation 
\bqn 
\zeta_{kl}: B_{Z_{kl}}((-T,T)\times S_k^+ \times \g) \longrightarrow (-T,T)\times S_k^+ \times \g
\eqn
with center
\bqn
Z_{kl}=  (-T,T)\times \frak{iso} \,S_{k,l}^+(H_l), \qquad S_{k,l} = \bigcup _{p^{(k)} \in M_k(H_k)} (S_k)_{p^{(k)},l}.
\eqn
Writing  $ A^{(l)}(p^{(l)},\alpha^{(l)})=\sum  \alpha_i^{(l)} A_i^{(l)}(p^{(l)})$,  $ B^{(l)}(p^{(l)},\beta^{(l)})=\sum  \beta_i^{(l)} B_i^{(l)}(p^{(l)})$, and 
\bqn
 v^{(l)}(p^{(l)},\theta^{(l)})= \sum _{i=1}^{c^{(l)}} \theta_i^{(l)} v_i^{(kl)}(p^{(l)}),
\eqn
one has $Z_{kl}=\mklm{\alpha^{(k)}=0, \, \alpha^{(l)}=0, \, \theta^{(l)}=0}$. If we now cover $B_{Z_{kl}}((-T,T)\times S_k^+ \times \g)$ with the standard charts, we shall see again in Section \ref{sec:8} that   modulo higher order terms we can assume that $((\chi_k a   \circ ( \id_\xi \otimes \zeta_k))\chi_{kl}) \circ \zeta_{kl}$ has compact support in one of the $\theta^{(l)}$-charts. Therefore it suffices to examine $\zeta_{kl}$ in one of these charts, in which it reads
\begin{gather*}
\zeta_{kl}: (p^{(k)},\tau_k, p^{(l)}, \tau_l, \tilde v^{(l)}, A^{(k)},  A^{(l)},  B^{(l)}) \mapsto \\\mapsto (p^{(k)},\tau_k, \exp_{p^{(l)}} \tau_l \tilde v^{(l)}, \tau_l A^{(k)}, \tau_l  A^{(l)}+ B^{(l)})\equiv(p^{(k)},\tau_k, \tilde v^{(k)},A^{(k)}, B^{(k)}),
\end{gather*}
where
\bqn
\tilde v^{(l)}(p^{(l)},\theta^{(l)})= \Big (v_\sigma^{(kl)}(p^{(l)})+ \sum _{i\not=\sigma}^{c^{(l)}} \theta_i^{(l)} v_i^{(kl)}(p^{(l)})\Big) \Big / \sqrt{1 +  \sum _{i\not=\sigma} (\theta_i^{(l)})^2}
\eqn
for some $\sigma$. Note that $Z_{kl}$ has normal crossings with the exceptional divisor $E_k=\zeta_k^{-1}(Z_k) = \mklm{\tau_k=0}$, and that 
$$W_{kl} \simeq f_{kl} (S_{kl}^+ \times (-1,1))$$ up to a set of measure zero, where $S_{kl}$ denotes  \footnote{In order not to overload notation, we have denoted by $S_{kl}$ and $S_{k,l}$ two quite different sets.}  the sphere subbundle in $\nu_{kl}$,  and we set $S_{kl}^+=\mklm { v \in S_{kl}:  v=\sum v_i v_i^{(kl)}, \, v_\sigma>0 }$. Taking into account that $\exp_{p^{(l)}}\tau_l \tilde v^{(l)}= (\cos \tau_l) \, p^{(l)} +(\sin\tau_l)\, \tilde v^{(l)}$, one sees that the phase function factorizes according to 
\bqn 
\psi \circ (\id_\xi \otimes (\zeta_k \circ \zeta_{kl}))= \,^{(kl)} \tilde \psi^{tot}=\tau_k \, \tau_l \cdot \,  ^{(kl)} \phw,
\eqn
which in the given charts reads
\begin{align*}
\psi(x,\xi,X)&=\tau_k \left [ \eklm{ \tau_l A^{(k)} \, p ^{(k)}+ (\tau_l A^{(l)}+B^{(l)}) \exp_{p^{(l)}} \tau_l \tilde v^{(l)}, \xi}+\tau_k\tau_l \eklm{ A^{(k)} \tilde v ^{(k)} ,\xi} \right ] \\
&=\tau_k \tau_l \left [ \eklm{A^{(k)} \, p ^{(k)} +  A^{(l)} \, p ^{(l)}+B^{(l)} \tilde v^{(l)}, \xi}+O(|\tau_k \,A^{(k)}|)+O(|\tau_l \,A^{(l)}|)+O(|\tau_l B^{(l)}\tilde v^{(l)}|) \right ],
\end{align*}
where 
\begin{align*} 
O(|\tau_k \,A^{(k)}|)&=\tau_k \eklm{ A^{(k)} \tilde v ^{(k)} ,\xi}, \\
O(|\tau_l \,A^{(l)}|)&=\eklm{(\cos \tau_l -1) A^{(l)} p^{(l)} + \sin \tau_l A^{(l)}  \tilde v^{(l)} , \xi },\\
O(|\tau_l B^{(l)}\tilde v^{(l)}|)&=\eklm{(\tau_l^{-1} \sin \tau_l -1) B^{(l)} \tilde v^{(l)} , \xi }. 
\end{align*}
Since 
\begin{gather*}
\zeta_{kl}^{-1}(\{p^{(k)}\} \times \{ \tau_k\} \times \pi_{kl}^{-1} (p^{(l)})\times \g_{p^{(k)}}\times \g_{p^{(k)}}^\perp )\\=\{p^{(k)}\} \times \{\tau_k\} \times \{p^{(l)}\} \times (-T,T) \times (S_{kl}^+)_{p^{(l)}} \times \g _{p^{(l)}}\times \g _{p^{(l)}}^\perp \times \g _{p^{(k)}}^\perp,
\end{gather*}
we obtain for $ I_{kl}(\mu)$ the expression
\begin{align*}
I_{kl}(\mu) &=\int_{M_k(H_k)\times (-T,T)} \Big [ \int_{(S_k)_{p^{(k)},l}(H_{l})} \Big [ \int_{\zeta_{kl}^{-1}(\{p^{(k)}\}\times \{ \tau_k\} \times \pi_{kl}^{-1} (p^{(l)})\times \g_{p^{(k)}}\times \g_{p^{(k)}}^\perp )\times \rn} e^{i\frac{\tau_k\tau_l }\mu  \, ^{(kl)} \phw}  \\ &\times  ( (\chi_k a  \circ (\id_\xi \otimes \zeta_k )) \chi_{kl} ) \circ \zeta_{kl} \,  \tilde  \Phi_{kl} \, \d \xi  \, d A^{(k)} \, d A^{(l)} \, dB^{(l)} \,  d\tilde v^{(l)}  \, d\tau_l \Big ] dp^{(l)}  \Big ] \, d\tau_k \,   dp^{(k)}\\
&=\int_{M_k(H_k)\times (-T,T)} \Big [ \int_{(S_k)_{p^{(k)},l}(H_{l})\times (-T,T)} \Big [ \int_{(S^+_{kl})_{p^{(l)}} \times \g_{p^{(l)}} \times \g_{p^{(l)}}^\perp \times \g_{p^{(k)}}^\perp )\times \rn} e^{i\frac{\tau_k\tau_l }\mu  \, ^{(kl)} \phw}  \\ &\times  ( (\chi_k a  \circ (\id_\xi \otimes \zeta_k )) \chi_{kl} ) \circ \zeta_{kl} \,  \tilde  \Phi_{kl} \, \d \xi  \, d A^{(k)} \, d A^{(l)} \, dB^{(l)} \,  d\tilde v^{(l)}   \Big ]  d\tau_l \,dp^{(l)}  \Big ] \, d\tau_k \,   dp^{(k)},
\end{align*}
where $d\tilde v^{(l)}$ is a suitable measure in $(S_{kl}^+)_{p^{(l)}}$ such that we have the equality 
$$\d X \d x \equiv \tilde \Phi_{kl} \,  d A^{(k)} \, dA^{(l)} \, dB^{(l)} \,  d\tilde v^{(l)}  \, d\tau_l \, dp^{(l)}\, d\tau_k \, dp^{(k)}.$$
Furthermore, $\tilde \Phi_{kl} = |\tau_l | ^{c^{(l)} +d^{(k)} +d^{(l)} -1} \Phi_{kl} \circ \zeta_{kl}$.

\subsection*{Second reduction} Now, the group $G_{p^{(k)}}$ acts on $(S_k)_{p^{(k)},l}$ with the isotropy types $(H_l)=(H_{i_j}),(H_{i_{j+1}}), \dots, (H_L)$. By the same arguments given in the first reduction, the isotropy types occuring in $W_{kl}$ constitute a subset of these types, and we shall denote them by
\bqn
(H_l) = (H_{i_{r_1}}), (H_{i_{r_2}}), \dots, (H_L).
\eqn
Consequently, $G_{p^{(k)}}$ acts on $S_{kl}$ with the isotropy types $(H_{i_{r_2}}), \dots, (H_L)$. Again, if $G$ acted on $S_{kl}$ only with type $(H_L)$, we shall see in the next section that the critical set of $^{(kl)} \phw$ would be clean. However, in general this will not be the case, and we have to continue with the iteration.

\subsection*{N-th decomposition}
 The end of the iteration will be reached, once one arrives at a sphere bundle $S_{klmn...}$ on which $G$ acts only with the isotropy type $(H_L)$. More precisely, let  $(H_{i_1}), \dots , (H_{i_{N+1}})=(H_L)$ be a branch of the isotropy tree of the $G$-action in $\rn$, $N\geq 3$,  and consider for every fixed $p^{(i_{N-1})} \in(S_{i_1\dots i_{N-2}})_{p^{(i_{N-2})},i_{N-1}}(H_{i_{N-1}})$ the decomposition of the closed $G_{p^{(i_{N-1})}}$-manifold
 $(S_{i_1\dots i_{N-1}})_{p^{(i_{N-1})}}$ given by 
\begin{gather*}
(S_{i_1\dots i_{N-1}})_{p^{(i_{N-1})}} =W_{i_1\dots i_{N}} \, \cup \, W_{i_1\dots i_{N-1} L}, \\ W_{i_1 \dots i_N}= f_{i_1\dots i_N}(\stackrel \circ D_1(\nu_{i_1\dots i_N})), \quad W_{i_1 \dots i_{N-1}L}=\mathrm{Int}  (S_{i_1\dots i_{N-1}})_{p^{(i_{N-1})},L},
\end{gather*}
where $f_{i_1\dots i_N}:\nu_{i_1\dots i_N } \rightarrow (S_{i_1\dots i_{N-1}})_{p^{(i_{N-1})},i_N}$ is an invariant tubular neighborhood of the closed invariant submanifold $(S_{i_1\dots i_{N-1}})_{p^{(i_{N-1})},i_N}(H_{i_N})$ in $(S_{i_1\dots i_{N-1}})_{p^{(i_{N-1})},i_N}=(S_{i_1\dots i_{N-1}})_{p^{(i_{N-1})}}$, and 
\bqn 
(S_{i_1\dots i_{N-1}})_{p^{(i_{N-1})},L}=(S_{i_1\dots i_{N-1}})_{p^{(i_{N-1})}}-f_{i_1\dots i_N}(\stackrel \circ D_{1/2}(\nu_{i_1\dots i_N})).
\eqn
Let $\mklm{\chi_{i_1\dots i_{N}}, \chi_{i_1\dots i_{N-1} L}}$ denote a partition of unity subordinated to the covering by  the open sets $\{W_{i_1\dots i_{N}}, W_{i_1\dots i_{N-1} L}\}$, and decompose $I_{i_1\dots i_{N-1}}(\mu)$ accordingly, so that
\bqn
I_{i_1\dots i_{N-1}}(\mu)= I_{i_1\dots i_N}(\mu) + I_{i_1\dots i_{N-1}L}(\mu) . 
\eqn
 \subsection*{N-th monoidal transformation} In the chart $(-T,T)^{N-1} \times S_{i_1\dots i_{N-1}}^+ \times \g$ consider the  monoidal transformation 
 \bqn 
 \zeta_{i_1\dots i_N}: B_{Z_{i_1\dots i_N}}((-T,T)^{N-1} \times S_{i_1\dots i_{N-1}}^+ \times \g)\longrightarrow (-T,T)^{N-1} \times S_{i_1\dots i_{N-1}}^+ \times \g
 \eqn
  with center
 \begin{gather*}
 Z_{i_1\dots i_N}= (-T,T)^{N-1} \times \mathfrak{iso} \,  S_{i_1\dots i_{N-1}, i_N}^+(H_{i_N}), \\ S_{i_1\dots i_{N-1}, i_N}=\bigcup _{p^{(i_{N-1})}} (S_{i_1\dots i_{N-1}})_{p^{(i_{N-1})},i_N}=S_{i_1\dots i_{N-1}}.
 \end{gather*}
 The phase function then factorizes according to 
\bqn
\, ^{(i_1\dots i_N)} \tilde \psi^{tot}=\tau_{i_1} \cdots \tau_{i_N} \, ^{(i_1\dots i_N)}\tilde \psi^ {wk},
\eqn
where in the given charts 
\bqn
\, ^{(i_1\dots i_N)}\tilde \psi^ {wk}=\eklm{  \sum _{j=1}^N A^{(i_j)} \, p^{(i_j)}+B^{(i_N)} \tilde v^{(i_N)},\xi} + \sum _{j=1}^N O(|\tau_{i_j} A^{(i_j)}|)+  O(|\tau_{i_N} B^{(i_N)}\tilde v ^{(i_N)}|),
\eqn
and denoting \footnote{Again, note the different meaning of the notations $S_{i_1 \dots i_N}$ and $S_{i_1\dots i_{N-1}, i_N}$.} by $S_{i_1 \dots i_{N}}$ the sphere bundle over $(S_{i_1\dots i_{N-1}})_{p^{(i_{N-1})}, i_{N}} (H_{i_{N}})$, one finally obtains for the integral $I_{i_1\dots i_N}(\mu)$ the expression
\begin{align}
\label{eq:N}
\begin{split}
I&_{i_1\dots i_N}(\mu)=\int_{M_{i_1}(H_{i_1})\times (-T,T)} \Big [ \int_{(S_{i_1})_{p^{(i_1)},i_2}(H_{i_2})\times (-T,T)} \dots \Big [  \int_{(S_{i_1\dots i_{N-1}})_{p^{(i_{N-1})},i_{N}}(H_{i_{N}})\times (-T,T)} \\
&\Big [ \int_{(S_{i_1\dots i_{N}}^+)_{p^{(i_N)}}\times \g_{p^{(i_{N})}}\times \g_{p^{(i_{N})}}^\perp \times \cdots \times \g_{p^{(i_{1})}}^\perp\times \rn} e^{i\frac {\tau_1 \dots \tau_N}\mu \, ^{(i_1\dots i_N)} \tilde \psi ^{wk}}  \, a_{i_1\dots i_N} \,   \tilde \Phi_{i_1\dots i_N} \\
& \d \xi  \d A^{(i_1)} \dots  \d A^{(i_N)}  \d B^{(i_N)} \d \tilde v^{(i_N)} \Big ]  \d \tau_{i_N} \d p^{(i_{N})} \dots  \Big ] \d \tau_{i_2} \d p^{(i_{2})} \Big ]\d \tau_{i_1} \d p^{(i_{1})}.
\end{split}
\end{align}
Here
\bqn 
a_{i_1\dots i_N}=[ a \, \chi_{i_1}  \circ (\id_\xi \otimes \zeta_{i_1} \circ \zeta_{i_1i_2} \circ \dots \circ \zeta_{i_1\dots i_N})] \, [ \chi_{i_1i_2} \circ \zeta_{i_1i_2} \circ \dots\circ \zeta_{i_1\dots i_N}  ] \dots [\chi_{i_1\dots i_N} \circ \zeta_{i_1\dots i_N}]
\eqn
is supposed to have compact support in one of the $\theta^{(i_N)}$-charts, and
\begin{align*}
\tilde \Phi_{i_1\dots i_N} &= |\tau_{i_1}|^{c^{(i_1)}+d^{(i_1)}-1}  |\tau_{i_2}|^{c^{(i_2)}+d^{(i_1)}+d^{(i_2)}-1} \dots |\tau_{i_N}|^{c^{(i_N)}+d^{(i_1)}+\dots + d^{(i_N)}-1}  \Phi_{i_1\dots i_N} \\
&=\prod_{j=1}^N |\tau_{i_j}|^{c^{(i_j)}+\sum_r^j d^{(i_r)}-1}\Phi_{i_1\dots i_N},
\end{align*}
where $\Phi_{i_1\dots i_N}$ is a smooth function which does not depend on the variables $\tau_{i_j}$.

\subsection*{N-th reduction} By assumption, $G$ acts on $S_{i_1\dots i_N}$ only with type $(H_L)$, and the iteration process ends here.

\section{Phase analysis of the weak transform. The first fundamental theorem}

We are now in position to state the first fundamental theorem in  the derivation of equivariant spectral asymptotics. With the notation as in the previous section, consider an iteration of $N$ steps along the branch $((H_{i_1}), \dots , (H_{i_{N+1}})=(H_L))$ of the isotropy tree of the $G$-action in $\rn$, and let 
\begin{gather*}
p^{(i_1)}\in M_{i_1}(H_{i_1}), \quad p^{(i_j)}\in (S_{i_1\dots i_{j-1}}^+)_{p^{(i_{j-1})}, i_j}(H_{i_j}),  \quad j=2,\dots N, \\
\g = \g_{p^{(i_1)}} \oplus \g_{p^{(i_1)}}^\perp =(\g_{p^{(i_2)}}\oplus \g_{p^{(i_2)}}^\perp) \oplus \g_{p^{(i_1)}}^\perp =\dots = \g_{p^{(i_N)}}\oplus \g_{p^{(i_N)}}^\perp \oplus \cdots \oplus \g_{p^{(i_1)}}^\perp\\
d^{(i_j)}=\dim \g_{p^(i_j)}^\perp, \quad e^{(i_j)}=\dim \g_{p^(i_j)}, \quad  j=1,\dots, N.
\end{gather*}
As before, $\mklm{ A_r^{(i_j)}(p^{(i_1)},\dots,p^{(i_j)})}$ will denote a basis of $\g_{p^(i_j)}^\perp$, and $\mklm{ B_r^{(i_N)}(p^{(i_1)},\dots,p^{(i_N)})}$ a basis of $\g_{p^(i_N)}$. Let further 
\begin{align*}
A^{(i_j)} &=\sum_{r=1}^{d^{(i_j)}} \alpha^{(i_j)}_r A_r^{(i_j)}(p^{(i_1)},\dots,p^{(i_j)}), \qquad B^{(i_N)} =\sum_{r=1}^{e^{(i_N)}} \beta^{(i_N)}_r B_r^{(i_N)}(p^{(i_1)},\dots,p^{(i_N)}),
\end{align*}
and put
\bqn
\tilde v^{(i_N)}(p^{(i_j)},\theta^{(i_N)})= \Big (v_\rho^{(i_1\dots i_N)}(p^{(i_j)})+ \sum _{r\not=\rho}^{c^{(i_N)}} \theta_r^{(i_N)} v_r^{(i_1\dots i_N)}(p^{(i_j)})\Big )\Big / \sqrt{1 + \sum\limits_{r\not=\rho} (\theta_r^{(i_N)})^2} 
\eqn
for some $\rho$, where $\mklm{v_r^{(i_1\dots i_N)}(p^{(i_1)}, \dots p^{(i_N)}  )}$ is an orthonormal frame in $(\nu_{i_1\dots  i_N})_{p^{(i_N)}}$.
Finally, we shall use the notations
\begin{align*}
x^{(i_j\dots i_{N})}&=\exp_{p^{(i_j)}}[\tau_{i_j} \exp_{p^(i_{j+1})}[\tau_{i_{j+1}}\exp_{p^(i_{j+2})}[\dots [ \tau_{i_{N-2}}\exp_{p^(i_{N-1})}[\tau_{i_{N-1}}\exp_{p^{(i_N)}}[ \tau_{i_N} \tilde v ^{(i_N)}]]] \dots ]]], \\
X^{(i_j\dots i_{N})}&={\tau_{i_j} \cdots \tau_{i_N}A^{(i_j)}}+{\tau_{i_{j+1}} \cdots \tau_{i_N}A^{(i_{j+1})}}+\dots +{\tau_{i_{N-1}}  \tau_{i_N}A^{(i_{N-1})}} +{\tau_{i_N} A^{(i_N)}}  +B^{(i_N)},
\end{align*}
where $j=1,\dots ,N$. We then have the following

\begin{theorem}
Consider the factorization 
\bqn
\, ^{(i_1\dots i_N)} \tilde \psi^{tot}=\psi (x^{(i_1\dots i_N)}, \xi, X^{(i_1\dots i_N)}) =\tau_{i_1} \cdots \tau_{i_N} \, ^{(i_1\dots i_N)}\tilde \psi^ {wk, \, pre}
\eqn
of the phase function $\psi$ after $N$ iteration steps, where 
\bqn
\, ^{(i_1\dots i_N)}\tilde \psi^ {wk, \, pre}=\eklm{  \sum _{j=1}^N A^{(i_j)} \, p^{(i_j)}+B^{(i_N)} \tilde v^{(i_N)},\xi} + \sum _{j=1}^N O(|\tau_{i_j} A^{(i_j)}|)+  O(|\tau_{i_N} B^{(i_N)}\tilde v ^{(i_N)}|).
\eqn
By construction, for $\tau_{i_j}\not=0$, $1\leq j\leq N$, the $G$-orbit through $x^{(i_1\dots i_N)}$ is of principal type $G/H_L$, which is equivalent to say that $G$ acts on $S_{i_1\dots i_N}$ only with the isotropy type $(H_L)$. Let further 
\bqn 
 \, ^{(i_1\dots i_N)}\tilde \psi^ {wk}
\eqn
denote the pullback of  $ \, ^{(i_1\dots i_N)}\tilde \psi^ {wk,\,pre}$ along the  substitution $\tau=\delta_{i_1\dots i_N}(\sigma)$ given by the sequence of monoidal transformations
\begin{align*}
\delta_{i_1\dots i_N}: (\sigma_{i_1}, \dots \sigma_{i_N}) &\mapsto \sigma_{i_1}( 1, \sigma_{i_2}, \dots, \sigma_{i_N})= (\sigma_{i_1}', \dots ,\sigma_{i_N}')\mapsto \sigma_{i_2}'(\sigma_{i_1}',1,\dots, \sigma_{i_N}')= (\sigma_{i_1}'', \dots, \sigma_{i_N}'')\\
 &\mapsto \sigma_{i_3}''(\sigma_{i_1}'',\sigma_{i_2}'', 1,\dots, \sigma_{i_N}'')= \cdots \mapsto \dots = (\tau_{i_1}, \dots ,\tau_{i_N}).
\end{align*}
Then the critical set $\Crit(\, ^{(i_1\dots i_N)} \phw)$ of $\, ^{(i_1\dots i_N)} \phw$ is given by all points with coordinates
$$(\sigma_{i_1}, \dots, \sigma_{i_N}, p^{(i_1)}, \dots, p^{(i_N)},  \tilde v ^{(i_N)}, \alpha^{(i_1)}, \dots, \alpha^{(i_N)}, \beta^{(i_N)}, \xi) $$
satisfying the conditions

\medskip
\begin{tabular}{ll}
\emph{(I)} &  $\alpha^{(i_j)} =0$ for all $j=1,\dots,N$ , and $\sum \beta_r^{(i_N)} B^{(i_N)}_r\tilde v^{(i_N)}= 0$; \\[2pt]
\emph{(II)} & $\xi \perp \big (\g_{p^{(i_1)}}^\perp \cdot  x^{(i_1\dots i_N)} \big )$;  \, \, $\xi \perp \big (\g_{p^{(i_2)}}^\perp \cdot  x^{(i_2\dots i_N)}\big )$; \dots \, \,  $\xi \perp \big (\g_{p^{(i_N)}}^\perp \cdot  x^{(i_N)}\big )$; \\[2pt]
\emph{(III)} &$\xi \perp \big (\g_{p^{(i_N)}} \cdot  \tilde v^{(i_N)}\big ).$
\end{tabular}
\medskip

\noindent
Furthermore,  $\Crit(\, ^{(i_1\dots i_N)} \phw)$ is a $\Cinft$-submanifold of codimension $2\kappa$, where $\kappa=\dim G/H_L$ is the dimension of a principal orbit.
\end{theorem}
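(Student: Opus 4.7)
The plan is to differentiate $\, ^{(i_1\dots i_N)}\phw$ with respect to each group of variables $\xi$, $\alpha^{(i_j)}$, $\beta^{(i_N)}$, $p^{(i_j)}$, $\tilde v^{(i_N)}$, $\sigma_{i_j}$, and show that the resulting system of equations is equivalent to (I), (II), (III), then verify smoothness and count codimension. The crucial preliminary observation is that the sequence of monoidal transformations $\delta_{i_1\dots i_N}$ is a further resolution of the $\tau$-coordinates at the corners of the exceptional divisor $\tau_{i_1}\cdots\tau_{i_N}=0$; in the $\sigma$-coordinates the product $\tau_{i_1}\cdots\tau_{i_N}$ becomes a controlled monomial in the $\sigma_{i_j}$, so that the weak transform remains smooth and its critical conditions become explicit.

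Condition (II) will arise from the derivatives with respect to $\alpha^{(i_j)}_r$. Here one exploits the key algebraic identity visible already in the second monoidal transformation: the leading term $\langle A^{(i_j)} p^{(i_j)},\xi\rangle$ combines with the correction term $O(|\tau_{i_j}A^{(i_j)}|)$ to reassemble into $\langle A^{(i_j)} x^{(i_j\dots i_N)},\xi\rangle$, since the nested exponentials satisfy $\exp_{p^{(i_j)}}\tau_{i_j}(\cdots) = p^{(i_j)} + \tau_{i_j}(\cdots)$ in our linear ambient space. Hence $\partial/\partial\alpha_r^{(i_j)}$ yields precisely $\langle A_r^{(i_j)} x^{(i_j\dots i_N)},\xi\rangle=0$, i.e.\ $\xi\perp \g_{p^{(i_j)}}^\perp\cdot x^{(i_j\dots i_N)}$. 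Condition (III) emerges analogously from $\partial/\partial\beta_r^{(i_N)}$: the correction $O(|\tau_{i_N} B^{(i_N)}\tilde v^{(i_N)}|)=\langle(\tau_{i_N}^{-1}\sin\tau_{i_N}-1)B^{(i_N)}\tilde v^{(i_N)},\xi\rangle$ combines with $\langle B^{(i_N)}\tilde v^{(i_N)},\xi\rangle$ to give $(\tau_{i_N}^{-1}\sin\tau_{i_N})\langle B_r^{(i_N)}\tilde v^{(i_N)},\xi\rangle$, and dividing by the nonvanishing factor $\tau_{i_N}^{-1}\sin\tau_{i_N}$ yields (III). Next, the $\partial/\partial\xi$ equation amounts to $X^{(i_1\dots i_N)} x^{(i_1\dots i_N)}=0$; using the orthogonal decomposition $\g=\bigoplus_j \g_{p^{(i_j)}}^\perp\oplus \g_{p^{(i_N)}}$ together with the fact that $A^{(i_j)}\cdot x^{(i_j\dots i_N)}$ lies transversally to the previous layers (this is the content of the iterative slice construction and the injectivity of $A\mapsto A p$ on $\g_{p^{(i_j)}}^\perp$), the vanishing separates by level and forces all $\alpha^{(i_j)}=0$ together with $\sum\beta_r^{(i_N)}B_r^{(i_N)}\tilde v^{(i_N)}=0$, giving (I). One then checks that the remaining derivatives (in $p^{(i_j)}$, $\tilde v^{(i_N)}$, and $\sigma_{i_j}$) are automatic consequences of (I)--(III), since on that locus the leading term vanishes identically and the corrections vanish to second order.

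For the codimension and smoothness, note that away from the exceptional divisor $\{\prod\sigma_{i_j}=0\}$, the composition of all monoidal transformations together with $\delta_{i_1\dots i_N}$ is a local diffeomorphism onto the principal stratum $\Reg\,\Omega\times \g$, so the critical set pulls back to a smooth manifold of the same codimension $2\kappa$ as $\Reg\,\Crit(\psi)$. On the exceptional divisor one must verify transversality of the system (I)--(III) directly. The count is: (I) imposes $\sum_{j=1}^N d^{(i_j)}+\dim(\g_{p^{(i_N)}}\cdot\tilde v^{(i_N)})$ equations, and (II)--(III) together impose $\sum_{j=1}^N d^{(i_j)}+\dim(\g_{p^{(i_N)}}\cdot\tilde v^{(i_N)})$ orthogonality conditions on $\xi$, which are linearly independent by the same transversality of the layers. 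Since $H_L$ is the principal isotropy, $\sum_j d^{(i_j)}+\dim(\g_{p^{(i_N)}}\cdot\tilde v^{(i_N)})=\dim G/H_L=\kappa$, yielding total codimension $2\kappa$.

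The main obstacle will be the careful verification that on the exceptional divisor the defining equations really cut out a smooth submanifold and do not accidentally drop rank. This requires tracking the orders of vanishing of each correction term relative to the divisor and confirming that the Jacobian of (I)--(III) has constant maximal rank along the entire critical locus; the substitution $\delta_{i_1\dots i_N}$ is engineered precisely so that the mixed error terms from different levels of the iteration do not spoil this rank computation, but making this bookkeeping airtight for arbitrary $N$ is the delicate part of the argument.
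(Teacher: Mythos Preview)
Your overall strategy matches the paper's: compute the $\xi$-, $\alpha$-, and $\beta$-derivatives to obtain (I)--(III), then argue that the remaining derivatives vanish automatically, and finally count codimension via the direct-sum decomposition of the tangent space to the orbit. The derivation of (II) and (III) from the $\alpha$- and $\beta$-derivatives, and of (I) from the $\xi$-derivative using the layered transversality of the $\g_{p^{(i_j)}}^\perp\cdot p^{(i_j)}$, is essentially what the paper does (the paper splits more carefully into the cases $\prod\sigma_{i_j}\neq 0$ and $\prod\sigma_{i_j}=0$, but your compressed version is fine in spirit). The codimension count via $\sum_j d^{(i_j)}+\dim(\g_{p^{(i_N)}}\cdot\tilde v^{(i_N)})=\kappa$ is also correct and is exactly the paper's computation with the spaces $E^{(i_j)}$ and $F^{(i_N)}$.

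There is, however, a genuine gap in your treatment of the $\tilde v^{(i_N)}$- and $p^{(i_j)}$-derivatives. Your justification ``on that locus the leading term vanishes identically and the corrections vanish to second order'' is not correct. Under (I) the leading term becomes $\langle B^{(i_N)}\tilde v^{(i_N)},\xi\rangle$, and its derivative in the direction of a tangent vector $w$ to the sphere is $\langle B^{(i_N)} w,\xi\rangle = -\langle w, B^{(i_N)}\xi\rangle$; this does \emph{not} vanish just from (I)--(III) as stated. Likewise the $p^{(i_N)}$-derivative brings in $\partial B^{(i_N)}/\partial p^{(i_N)}$, which produces terms of the form $[\,\cdot\,,B^{(i_N)}]$ acting on $\tilde v^{(i_N)}$ and $\xi$. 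What is actually needed is the further implication
\[
\text{(I), (II), (III)} \quad\Longrightarrow\quad B^{(i_N)}\xi=0,
\]
and this is not formal: it requires knowing that the $G_{p^{(i_N)}}$-orbit of $\tilde v^{(i_N)}$ in the slice $V^{(i_1\dots i_N)}$ is of \emph{principal} type, so that the isotropy $G_{\tilde v^{(i_N)}}$ acts trivially on the normal space $\big(\bigoplus_j E^{(i_j)}\oplus F^{(i_N)}\big)^\perp$ in which $\xi$ lies. The paper isolates this as a separate lemma and then uses $B^{(i_N)}\xi=0$ to kill both the $\tilde v^{(i_N)}$- and the $p^{(i_j)}$-derivatives. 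You should insert this step explicitly; without it the sufficiency direction is incomplete.
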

\begin{proof}
Let us compute first the derivatives with respect to $\xi$, and assume that all $\sigma_{i_j}$ are different from zero. Then all $\tau_{i_j}$ are different from zero, too, and $\gd_\xi \, ^{(i_1\dots i_N)} \phw=0$ is equivalent to
\begin{gather*}
\frac 1 {\tau_{i_1} \dots \tau_{i_N}} \gd _\xi \psi (x^{(i_1\dots i_N)}, \xi, X^{(i_1\dots i_N)})=0,
\end{gather*}
which gives us the condition $X^{(i_1\dots i_N)} \in \g_{x^{(i_1\dots i_N)}}$. Since for sufficiently small $\tau_{i_j}$ the point $x^{(i_1\dots i_N)}$ lies in a slice in $N_{p^{(i_1)}} (G \cdot p^{(i_1)})$, the element $X^{(i_1\dots i_N)}$ must annihilate $p^{(i_1)}$ as well. But 
\begin{align*}
\g_{p^{(i_N)}} \subset \g_{p^{(i_{N-1})}} \subset \dots \subset \g_{p^{(i_1)}}
\end{align*}
and $\g_{p^{(i_{j+1})}}^\perp \subset \g_{p^{(i_j)}}$ imply
\bqn
X^{(i_1\dots i_N)}p^{(i_1)}= {\tau_{i_1} \dots \tau_{i_N}\sum \alpha_r^{(i_1)} A_r ^{(i_1)}} p^{(i_1)} =0.
\eqn
Thus we conclude $\alpha^{(i_1)} =0$, which gives $X^{(i_2\dots i_N)} \in \g_{x^{(i_1\dots i_N)}}$, and consequently $X^{(i_2\dots i_N)} \in \g_{x^{(i_2\dots i_N)}}$. Repeating the above argument we actually obtain
\bq
\label{eq:G}
\g_{x^{(i_1 \dots i_N)}} = \g _{\tilde v ^{(i_N)}},
\eq
since $\g_{\tilde v ^{(i_N)}} \subset \g_{p ^{(i_N)}}$, and therefore condition I) in the case that all $\sigma_{i_j}$ are different from zero. Let now one of the  $\sigma_{i_j}$ be equal to zero.  Then all $\tau_{i_j}$ are zero, too, and $\gd_\xi \, ^{(i_1\dots i_N)} \phw=0$ is equivalent to
\bq
\label{eq:B}
 \sum _{j=1}^N \left (\sum_r \alpha^{(i_j)}_r A_r^{(i_j)}\right  ) p^{(i_j)}+\sum_r \beta^{(i_N)}_r B^{(i_N)}_r \tilde v^{(i_N)} =0.
\eq
Now, 
for every $j=1,\dots, N$, the group $G_{p^{(i_j)}}$ acts orthogonally on the space $N_{p^{(i_j)}} ( G_{p^{(i_{j-1})}} \cdot p^{(i_j)})$, where we understand that $G_{p^{(i_0)}}=G$. Furthermore, by construction we have  
\bqn
 N_{p^{(i_{j+1})}} ( G_{p^{(i_{j})}} \cdot p^{(i_{j+1})}) \subset N_{p^{(i_j)}} ( G_{p^{(i_{j-1})}} \cdot p^{(i_j)}), 
\eqn
so that 
\bq
\label{eq:V}
V^{(i_1\dots i_{j})}= \bigcap_{r=1} ^{j} N_{p^{(i_r)}} ( G_{p^{(i_{r-1})}} \cdot p^{(i_r)})= N_{p^{(i_j)}} ( G_{p^{(i_{j-1})}} \cdot p^{(i_j)}).
\eq
Since $p^{(i_j)} \in (S_{i_1\dots i_{j-1}})_{p^{(i_{j-1})}}\subset V^{(i_1\dots i_{j-1})}$, we therefore see that for every $j=2,\dots, N$
\bqn 
\sum_r \alpha_r^{(i_j)} A_r^{(i_j)} \, p^{(i_j)} \in \g _{p^{(i_j)}}^\perp \cdot p^{(i_j)}=  T_{p^{(i_j)}} ( G_{p^{(i_{j-1})}} \cdot p^{(i_j)})\subset V^{(i_1\dots i_{j-1})} .
\eqn 
In addition, one has of course $\sum_r \alpha^{(i_1)} _r A_r^{(i_1)} \, p^{(i_1)} \in \g _{p^{(i_1)}}^\perp \cdot p^{(i_1)}=  T_{p^{(i_1)}} ( G \cdot p^{(i_1)})$, as well as
\bqn
\sum_r \beta^{(i_N)} _r B_r^{(i_N)} \tilde v ^{(i_N)} \in V^{(i_1\dots i_{N})},
\eqn
so that taking everything together we obtain
\bq
\label{eq:I}
\gd_\xi \, ^{(i_1\dots i_N)} \phw=0 \quad \Longleftrightarrow \quad \alpha^{(i_j)}=0 \quad \forall \quad j=1,\dots,N \quad \text{and} \quad \sum_r \beta^{(i_N)} _r B_r^{(i_N)} \tilde v ^{(i_N)} =0.
\eq
Let us consider next the $\alpha$-derivatives. Clearly,  
\begin{align*}
\gd_{\alpha^{(i_1)}} \, ^{(i_1\dots i_N)} \phw=0 \quad & \Longleftrightarrow \quad \eklm{ Y x^{(i_1 \dots i_N)},\xi}=0 \quad \forall \, Y \in \g_{p^{(i_1)}}^\perp,\\ 
\gd_{\alpha^{(i_2)}} \, ^{(i_1\dots i_N)} \phw=0 \quad & \Longleftrightarrow \quad \eklm{ Y x^{(i_2 \dots i_N)},\xi}=0 \quad \forall \, Y \in \g_{p^{(i_2)}}^\perp.
\end{align*}
Now, assuming for a moment that all the $\sigma_{i_j}$ are different from zero,  one computes for the remaining  derivatives that 
\begin{align*}
\gd_{\alpha_r^{(i_j)}} \, ^{(i_1\dots i_N)} \phw &=\frac {1}{\tau_{i_1}\dots  \tau_{i_N}} \eklm{ \tau_{i_j}\dots  \tau_{i_N}A_r^{(i_j)}  x^{(i_1 \dots i_N)},\xi}\\ 
&=\frac 1 {\tau_{i_1}\dots \tau_{i_{j-1}}}  \eklm{ A_r^{(i_j)} (\tau_{i_1}\sin \tau_{i_2} \cdots \sin \tau_{i_{j-1}})x^{(i_j \dots i_N)},\xi},
\end{align*}
since $A_r^{(i_j)} \in \g_{p^{(i_j)}}^\perp \subset \g _{p^{(i_{j-1})}}\subset \dots \subset \g_{p^{(i_1)}}$. From this one deduces for arbitrary $\sigma_{i_j}$ that for $j=1,\dots, N$
\begin{align}
\label{eq:II}
\gd_{\alpha^{(i_j)}} \, ^{(i_1\dots i_N)} \phw=0 \quad & \Longleftrightarrow \quad \eklm{Y  x^{(i_j \dots i_N)},\xi}=0 \quad \forall \, Y \in \g_{p^{(i_j)}}^\perp.
\end{align}
In a similar way, it is not difficult to see that 
\begin{align}
\label{eq:III}
\gd_{\beta^{(i_N)}} \, ^{(i_1\dots i_N)} \phw=0 \quad & \Longleftrightarrow \quad \eklm{ Z \tilde v^{(i_N)},\xi}=0 \quad \forall \, Z \in \g_{p^{(i_N)}},
\end{align}
by which the necessity of the conditions (I)--(III) is established. In order to see their suffficiency, let them be fulfilled, and let us assume again that $\sigma_{i_j}\not=0$ for all $j=1,\dots,N$. Then (II) and (III) imply that 
\bqn 
\eklm{Z\exp_{p^{(i_N)}} \tau_{i_N} \tilde v^{(i_N)}  ,\xi}=0 \qquad \forall Z \in \g_{p^{(i_{N-1})}},
\eqn
since $ \g_{p^{(i_{N-1})}} =  \g_{p^{(i_{N})}}\oplus  \g_{p^{(i_{N})}}^\perp$. By repeatedly using (II) we therefore conclude 
\bq
\label{eq:IVb}
\xi \in N_{x^{(i_1\dots i_N)}}(G\cdot x^{(i_1\dots i_N)}).
\eq
Now, by construction, $G  \cdot x^{(i_1\dots i_N)}$ is of principal type $G/ H_L$ in $\rn$, so that the isotropy group of $x^{(i_1\dots i_N)}$ must act trivially on $N_{x^{(i_1\dots i_N)}}(G\cdot x^{(i_1\dots i_N)})$, compare Bredon \cite{bredon}, page 181. In addition, by (I) and Equation \eqref{eq:G}, $\sum_r \beta^{(i_N)} _r B_r^{(i_N)}  \in \g_{\tilde v ^{(i_N)}} = \g_{x^{(i_1\dots i_N)}}$. The relation \eqref{eq:IVb}  therefore implies $\sum_r \beta^{(i_N)} _r B_r^{(i_N)}  \xi = 0$. Let us consider now the case where at least one of the $\sigma_{i_j}$ equals zero, so that all $\tau_{i_j}=0$. Then (II) means that $\xi \in V^{(i_1\dots i_N)}$. We shall now need  the following simple
\begin{lemma}
The orbit of the point $\tilde v^{(i_N)}$ in the $G_{p^{(i_N)}}$-space $V^{(i_1\dots i_N)}$ is of principal type.
\end{lemma}
\begin{proof}[Proof of the lemma]
By assumption, for  $\sigma_{i_j}\not=0$,  $1 \leq j \leq  N$, the $G$-orbit of $x^{(i_1\dots i_N)}$ is of principal type $G/ H_L$ in $\rn$. The theory of compact group actions then implies that this is equivalent to the fact that $x^{(i_2 \dots i_N)} \in V^{(i_1)}$ is of principal type in the $G_{p^{(i_1)}}$-space $V^{(i_1)}$, see Bredon \cite{bredon}, page 181, which in turn is equivalent to the fact that $x^{(i_3 \dots i_N)} \in V^{(i_1i_2)}$ is of principal type in the $G_{p^{(i_2)}}$-space $V^{(i_1i_2)}$, and so forth. Thus, $x^{(i_j \dots i_N)} \in V^{(i_1 \dots i_{j-1})}$ must be of principal type in the $G_{p^{(i_{j-1})}}$-space $V^{(i_1\dots i_{j-1})}$ for all $j=1,\dots N$, and the assertion follows.
\end{proof}
Now (III) implies that $\xi \in V^{(i_1\dots i_{N})} \cap N_{\tilde v^{(i_N)}} ( G_{p^{(i_N)}} \cdot\tilde v^{(i_N)} )$. But the previous lemma implies that $G_{\tilde v^{(i_N)}}$ acts trivially on the latter space, so that by condition  (I) we obtain again the condition $\sum_r \beta^{(i_N)} _r B_r^{(i_N)}  \xi = 0$. Collecting everything together we finally obtain 
\begin{align}
\label{eq:IV}
\mathrm{(I), \,(II), \,(III)} \quad & \Longrightarrow \quad \sum_r \beta^{(i_N)} _r B_r^{(i_N)}  \xi = 0.
\end{align}
Now, by \eqref{eq:I} -- \eqref{eq:III} we have
\begin{align*}
\mathrm{(I), \,(II), \,(III)} \quad & \Longleftrightarrow \quad  \gd_{\xi, \alpha^{(i_1)}, \dots , \alpha^{(i_N)} , \beta^{(i_N)}} \, ^{(i_1\dots i_N)} \phw=0.
\end{align*}
It therefore remains to study the derivatives with respect to the variables $\sigma_{i_j}$, $p^{(i_j)}$, and $\tilde v ^{(i_N)}$. It is immediately clear that 
\bqn 
\mathrm{(I)} \quad  \Longrightarrow \quad \gd_{\sigma} \, ^{(i_1\dots i_N)} \phw=0.
\eqn
By \eqref{eq:IV} we further have 
\bqn 
\mathrm{(I), \,(II), \,(III)} \quad  \Longrightarrow \quad \gd_{\tilde v ^{(i_N)}} \, ^{(i_1\dots i_N)} \phw=0.
\eqn 
Let us now take for each $j=1,\dots, N$ local coordinates $p^{(i_j)}=p^{(i_j)}(s^{(i_j)}) \in  (S_{i_1\dots i_{j-1}}^+)_{p^{(i_{j-1})}, i_j}(H_{i_j})$ around $p^{(i_j)}_0=p^{(i_j)}(0)$, and write 
\bqn 
B^{(i_N)}= g(p^{(i_N)}) \,  B_0^{(i_N)} \, g^{-1}(p^{(i_N)}) \in \g _{p^{(i_N)}}, \qquad  B_0^{(i_N)} \in \g_{p^{(i_N)}_0}, \quad g(p^{(i_N)} ) \in G.
\eqn 
On then computes 
\bqn
\frac {\gd B^{(i_N)}} {\gd{s^{(i_N)}_r}}= \Big (\frac {\gd g ( p^{(i_N)}(s^{(i_N)})) } {\gd {s^{(i_N)}_r}}\Big ) g (p^{(i_N)}(s^{(i_N)}))^{-1} B^{(i_N)} + B^{(i_N)} g(p^{(i_N)} (s^{(i_N)}))\Big (\frac {\gd g^{-1}( p^{(i_N)}(s^{(i_N)})) } {\gd {s^{(i_N)}_r}}\Big ),
\eqn
so that with  \eqref{eq:IV} one finally concludes
\bqn 
\mathrm{(I), \,(II), \,(III)} \quad  \Longrightarrow \quad \gd_{p^{(i_1)}, \dots,  p^{(i_N)}} \, ^{(i_1\dots i_N)} \phw=0.
\eqn
Thus we have computed the critical set of $\, ^{(i_1\dots i_N)} \phw$, and it remains to show that it is a $\Cinft$-submanifold of codimension $2\kappa$. For this end, let us define the subspaces
\bq
\label{eq:EF}
E^{(i_j)} = \g _{p^{(i_j)}}^\perp  \cdot x ^{(i_j \dots i_N)}, \qquad F^{(i_N)} = \g_{p^{(i_N)}}\cdot \tilde v^{(i_N)}. 
\eq
One has  $E^{(i_1)}  \subset \g   \cdot x ^{(i_1 \dots i_N)}=T_{x ^{(i_1 \dots i_N)}}(G \cdot x ^{(i_1 \dots i_N)}) $, as well as
\bqn 
E^{(i_j)}  \subset \g _{p^{(i_{j-1})}}  \cdot x ^{(i_j \dots i_N)}=T_{x ^{(i_j \dots i_N)}}(G_{p^{(i_{j-1})}} \cdot x ^{(i_j \dots i_N)}) \subset V^{(i_1 \dots i_{j-1})}
\eqn
for $2 \leq j \leq N$. Similarly, $F^{(i_N)} \subset V^{(i_1 \dots i_{N})}$. Now, for small $\tau_{i_j}$, we clearly have $E^{(i_j)} \cap V^{(i_1 \dots i_{j})}=\mklm{0}$, so that we obtain the direct sum of vector spaces
\bqn 
E^{(i_1)}\oplus E^{(i_2)}\oplus \dots  \oplus E^{(i_N)}\oplus F^{(i_N)}.
\eqn 
We therefore arrive at the characterization
\bq
\label{eq:C}
\Crit(\, ^{(i_1\dots i_N)} \phw)= \Big \{ \alpha^{(i_j)}=0, \quad \sum_r \beta^{(i_N)} _r B_r^{(i_N)} \tilde v ^{(i_N)}=0, \quad \xi \perp \Big(\bigoplus_{j=1}^N E^{(i_j)}\oplus F^{(i_N)}\Big ) \Big \},
\eq
Note that the condition $\sum_r \beta^{(i_N)} _r B_r^{(i_N)} \xi=0$ is already implied by the others. Now, for small, but arbitrary $\sigma_{i_j}$ one has
\bqn 
\dim E^{(i_j)} = \dim \g_{p^{(i_{j})}}^\perp \cdot p ^{(i_j )} = \dim G_{p^{(i_{j-1})}} \cdot p ^{(i_j )}.
\eqn
Since for  $\sigma_{i_j}\not=0$,  $1 \leq j \leq  N$, the $G$-orbit of $x^{(i_1\dots i_N)}$ is of principal type $G/ H_L$ in $\rn$ by assumption, one computes in this case
\begin{align*}
\kappa =& \dim G \cdot x^{(i_1\dots i_N)}= \dim \g \cdot x^{(i_1\dots i_N)}= \dim [  \g_{p^{(i_N)}}\oplus \g_{p^{(i_N)}}^\perp \oplus \cdots \oplus \g_{p^{(i_1)}}^\perp  ] \cdot x^{(i_1\dots i_N)} \\
=&\dim[ \g_{p^{(i_1)}}^\perp  \cdot x^{(i_1\dots i_N)} + \tau_{i_1}  \g_{p^{(i_2)}}^\perp \cdot x^{(i_2\dots i_N)} +  \tau_{i_1} \sin \tau_{i_2}  \g_{p^{(i_3)}}^\perp \cdot x^{(i_3\dots i_N)} + \dots  \\ &+  \tau_{i_1}\sin \tau_{i_2}\cdots  \sin \tau_{i_{N-1}}  \g_{p^{(i_N)}}^\perp \cdot x^{(i_N)} + \tau_{i_1}\sin \tau_{i_2}\cdots \sin \tau_{i_{N}} \g_{p^{(i_N)}} \tilde v^{(i_N)}] \\
 =& \dim[ E^{(i_1)} \oplus \tau_{i_1} E^{(i_2)}\oplus \tau_{i_1} \sin \tau_{i_2} E^{(i_3)} \oplus \dots   \oplus  \tau_{i_1}\sin \tau_{i_2}\cdots  \sin \tau_{i_{N-1}} E^{(i_N)} \\&  \oplus   \tau_{i_1}\sin \tau_{i_2}\cdots \sin \tau_{i_N} F^{(i_n)}] =\sum_{j=1} ^N \dim E^{(i_j)} + \dim F^{(i_N)}.
\end{align*}
But since the dimension of the spaces $E^{(i_j)}$ and $F^{(i_N)}$  does not depend on the variables $\sigma_{i_j}$, we obtain for sufficiently small, but arbitrary $\sigma_{i_j}$  the equality 
\bq
\label{eq:kappa}
\kappa=\sum_{j=1} ^N \dim E^{(i_j)} + \dim F^{(i_N)}.
\eq
Note that, in contrast, the dimension of  $\g \cdot x^{(i_1\dots i_N)}$ collapses, as soon as one of the $\tau_{i_j}$ becomes zero. Thus we arrive at a vector bundle with $(n-\kappa)$-dimensional fiber that is locally given by the trivialization 
\bqn 
(\sigma_{i_1}, \dots ,\sigma_{i_N}, p^{(i_1)}, \dots, p^{(i_N)},  \tilde v ^{(i_N)},  \big(\bigoplus_{j=1}^N E^{(i_j)}\oplus F^{(i_N)}\big )^\perp )\mapsto (\sigma_{i_1}, \dots \sigma_{i_N}, p^{(i_1)},\dots, p^{(i_N)},   \tilde v ^{(i_N)}).
\eqn
Consequently, by Equation \eqref{eq:C} we see that $\Crit(\, ^{(i_1\dots i_N)} \phw)$ is equal to the fiber product of the mentioned vector bundle with the isotropy algebra bundle that is given by the local trivialization
\bqn 
(\sigma_{i_1}, \dots, \sigma_{i_N}, p^{(i_1)}, \dots, p^{(i_N)},  \tilde v ^{(i_N)}, \g_{\tilde v ^{(i_N)}})\mapsto (\sigma_{i_1}, \dots, \sigma_{i_N}, p^{(i_1)}, \dots, p^{(i_N)},  \tilde v ^{(i_N)}).
\eqn
Lastly, since by Equation \eqref{eq:G} we have $\g_{\tilde v ^{(i_N)}}=\g_{x^{(i_1,\dots, i_N)}}$ in case that all $\sigma_{i_j}$ are different from zero, we necessarily have $\dim \g_{\tilde v ^{(i_N)}}=d-\kappa$, which concludes the proof of the theorem.
\end{proof}

\section{Phase analysis of the weak transform. The second fundamental theorem}

In this section, we shall prove the second fundamental theorem in the derivation of equivariant spectral asymptotics for orthogonal compact group actions. The notation will be the same as in the previous sections.
\begin{theorem}
Let
\bqn
\, ^{(i_1\dots i_N)} \tilde \psi^{tot}=\tau_{i_1} \dots \tau_{i_N} \, ^{(i_1\dots i_N)}\tilde \psi^ {wk, \, pre}=\tau_{i_1}(\sigma) \dots \tau_{i_N}(\sigma) \, ^{(i_1\dots i_N)}\tilde \psi^ {wk}
\eqn
denote the factorization of the phase function after $N$ iteration steps along the isotropy branch $ ((H_{i_1}), \dots, (H_{i_{N+1}})=(H_L))$. By construction, for $\tau_{i_j}\not=0$, $1\leq j\leq N$, the $G$-orbit through $x^{(i_1\dots i_N)}$ is of principal type $G/H_L$. Then for each point of the critical manifold $\Crit(\, ^{(i_1\dots i_N)}\tilde \psi^ {wk})$,  the restriction of 
\bqn 
\mathrm{Hess} \, ^{(i_1\dots i_N)}\tilde \psi^ {wk}
\eqn
to the normal space to $\Crit(\, ^{(i_1\dots i_N)}\tilde \psi^ {wk})$ at the given point defines a non-degenerate symmetric bilinear form.
\end{theorem}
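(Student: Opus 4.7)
The plan is to analyze the Hessian of $\, ^{(i_1\dots i_N)} \phw$ at an arbitrary point of the critical manifold, identify the normal space using the characterization \eqref{eq:C} provided by the first fundamental theorem, and verify that the restriction of the Hessian to this normal space is non-degenerate by exhibiting an anti-diagonal block structure whose off-diagonal block is an isomorphism.

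First, I would read off the tangent and normal directions to $\Crit(\, ^{(i_1\dots i_N)} \phw)$. By \eqref{eq:C}, the critical set is cut out by the conditions $\alpha^{(i_j)}=0$, $\sum_r \beta_r^{(i_N)} B_r^{(i_N)} \tilde v^{(i_N)} = 0$ (equivalently $B^{(i_N)} \in \g_{\tilde v^{(i_N)}}$), and $\xi \perp \bigoplus_j E^{(i_j)} \oplus F^{(i_N)}$. Hence the normal bundle is spanned by perturbations $\delta\alpha^{(i_j)}$ in the full $\alpha$-space, $\delta\beta^{(i_N)}$ in the orthogonal complement of $\g_{\tilde v^{(i_N)}}$ inside $\g_{p^{(i_N)}}$, and $\delta\xi \in \bigoplus_j E^{(i_j)} \oplus F^{(i_N)}$, and by \eqref{eq:kappa} its dimension equals $\sum_j d^{(i_j)} + \dim F^{(i_N)} + \kappa = 2\kappa$. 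All remaining coordinates $\sigma, p^{(i_j)}, \tilde v^{(i_N)}$ together with the tangent parts of $\beta^{(i_N)}$ and $\xi$ parameterize $\Crit$.

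Next, I would compute the transversal Hessian. Since $\mathrm{Hess}(\, ^{(i_1\dots i_N)} \phw)(v,w)$ vanishes whenever $v$ is tangent to $\Crit$, only the block indexed by the normal $(\delta\alpha,\delta\beta^\perp,\delta\xi^{\mathrm{norm}})$-directions survives. Inspection of the explicit expression for $\, ^{(i_1\dots i_N)} \phw$ shows that it is separately affine in $\alpha$, in $\beta^{(i_N)}$, and in $\xi$: the principal term $\eklm{\sum_j A^{(i_j)} p^{(i_j)} + B^{(i_N)} \tilde v^{(i_N)},\xi}$ is trilinear of this type, while each correction $O(|\tau_{i_j} A^{(i_j)}|)$ and $O(|\tau_{i_N} B^{(i_N)}\tilde v^{(i_N)}|)$ is linear in the indicated $\alpha$- or $\beta$-variable. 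Consequently the pure blocks $\gd^2/\gd \alpha^2$, $\gd^2/\gd \beta^2$, $\gd^2/\gd \xi^2$ vanish, as do the $\gd^2/\gd \alpha\,\gd \beta$-blocks (no $\alpha\beta$-cross terms appear), and the transversal Hessian assumes the anti-diagonal form
\bqn
H_{\mathrm{norm}} = \begin{pmatrix} 0 & M^T \\ M & 0 \end{pmatrix},
\eqn
where $M\colon(\delta\alpha,\delta\beta^\perp)\mapsto \sum_{j,r} \delta\alpha_r^{(i_j)} A_r^{(i_j)} \cdot x^{(i_j\dots i_N)} + \sum_r \delta\beta_r^{(i_N)} B_r^{(i_N)} \cdot \tilde v^{(i_N)}$, projected onto $\bigoplus_j E^{(i_j)} \oplus F^{(i_N)}$.

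The final step is to verify that $M$ is an isomorphism, which by the dimension equality $\sum_j d^{(i_j)} + \dim F^{(i_N)} = \kappa$ reduces to injectivity. On the $\alpha$-summand, $A \in \g_{p^{(i_j)}}^\perp$ with $A\cdot x^{(i_j\dots i_N)}=0$ forces $A\in \g_{x^{(i_j\dots i_N)}}$; since $x^{(i_j\dots i_N)}$ lies in a slice at $p^{(i_j)}$, the slice theorem yields $\g_{x^{(i_j\dots i_N)}} \subset \g_{p^{(i_j)}}$, so $A\in \g_{p^{(i_j)}}^\perp \cap \g_{p^{(i_j)}}=0$. On the $\beta$-summand, injectivity is the very definition of $\g_{\tilde v^{(i_N)}}^\perp$ and of $F^{(i_N)} = \g_{p^{(i_N)}}\cdot \tilde v^{(i_N)}$. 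Hence $M$ is an isomorphism and $H_{\mathrm{norm}}$ is non-degenerate. The main technical point to verify is that the $\tau$-dependent corrections $O(|\tau_{i_j} A^{(i_j)}|)$ and $O(|\tau_{i_N} B^{(i_N)}\tilde v^{(i_N)}|)$, upon differentiation, contribute to $M$ only terms whose images still land in $E^{(i_j)} \oplus F^{(i_N)}$, so that the invertibility persists uniformly on $\Crit$; this should follow from direct inspection of the explicit $\cos\tau_l$, $\sin\tau_l$, and $\tau_l^{-1}\sin\tau_l$ expressions collected in the monoidal-transformation subsections.
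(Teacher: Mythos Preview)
Your argument is essentially correct and constitutes a genuinely different route from the paper's. The paper proceeds in two cases. For $\sigma_{i_1}\cdots\sigma_{i_N}\neq 0$ it avoids any direct Hessian computation: since $\zeta_{i_1}\circ\cdots\circ\zeta_{i_1\dots i_N}\circ\delta_{i_1\dots i_N}$ is then a diffeomorphism, non-degeneracy of the transversal Hessian of $\,^{(i_1\dots i_N)}\tilde\psi^{tot}$ is inherited from that of $\psi$ on $\Reg\Crit(\psi)$, and the factor $\tau_{i_1}\cdots\tau_{i_N}$ is stripped off using that $\,^{(i_1\dots i_N)}\phw$ vanishes along its critical set. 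For $\sigma_{i_1}\cdots\sigma_{i_N}=0$ the paper first invokes Lemma~\ref{lemma:A} to reduce to the restricted function $\,^{(i_1\dots i_N)}\tilde\psi^{wk}_{\sigma_{i_j},p^{(i_j)},\tilde v^{(i_N)}}$ in the variables $(\alpha,\beta,\xi)$ alone, and then (Proposition~\ref{prop:1}) computes the full kernel of its Hessian at $\tau=0$ and matches it with the tangent space of the critical set. Your approach instead treats all $\sigma$ uniformly: you exploit directly that $\,^{(i_1\dots i_N)}\phw$ is jointly linear in $(\alpha,\beta)$ and in $\xi$, which forces the transversal Hessian into the anti-diagonal form $\begin{pmatrix}0&M^T\\M&0\end{pmatrix}$, and then you check that $M$ is injective using the direct-sum decomposition $\bigoplus_j E^{(i_j)}\oplus F^{(i_N)}$ and the equality $\dim E^{(i_j)}=d^{(i_j)}$ established in the first fundamental theorem. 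This is cleaner and more uniform; the paper's split buys a simpler explicit matrix (only $p^{(i_j)}$ rather than $x^{(i_j\dots i_N)}$ appears at $\tau=0$) and reuses the already-known non-degeneracy on $\Reg\Crit(\psi)$.

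Two minor points. First, for $j\geq 2$ your slice argument should invoke the $G_{p^{(i_{j-1})}}$-slice at $p^{(i_j)}$ rather than a $G$-slice; since $A\in\g_{p^{(i_j)}}^\perp\subset\g_{p^{(i_{j-1})}}$, this still yields $A\in\g_{p^{(i_j)}}$ and hence $A=0$. Second, your closing remark about the $\tau$-dependent corrections is unnecessary: by definition $E^{(i_j)}=\g_{p^{(i_j)}}^\perp\cdot x^{(i_j\dots i_N)}$ already incorporates all $\tau$-dependence, and the mixed derivative $\gd_{\alpha_r^{(i_j)}}\gd_{\xi_s}\,^{(i_1\dots i_N)}\phw$ equals a nonvanishing scalar (of the form $\prod_{l<j}\tau_{i_l}^{-1}\sin\tau_{i_l}$) times $[A_r^{(i_j)}x^{(i_j\dots i_N)}]_s$, so the image of $M$ lands in $\bigoplus_j E^{(i_j)}\oplus F^{(i_N)}$ automatically and the invertibility holds without further inspection.
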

Before proving the theorem, let us make  the following general observation. Let $M$ be a $n$-dimensional Riemannian manifold, and $C$ the critical set of a function $\psi \in \Cinft(M)$, which is assumed to be a smooth submanifold in a chart $\mathcal{O} \subset M$. Let further 
\bqn
\alpha:(x,y) \mapsto p, \qquad \beta:(m_1,\dots, m_n) \mapsto p, 	\qquad p \in \mathcal{O},
\eqn
be two systems of  local coordinates on $\mathcal{O}$, such that $\alpha(x,y) \in C$ if and only if $y=0$. One computes
\begin{align*}
\gd_{y_l}(\psi \circ \alpha)(x,y)=\sum_{i=1}^n \frac{\gd(\psi \circ \beta)}{\gd m_i} (\beta^{-1}\circ \alpha(x,y)) \, \gd _{y_l} (\beta^{-1} \circ \alpha)_i (x,y),
\end{align*}
as well as
\begin{align*}
\gd_{y_k} \gd_{y_l} (\psi \circ \alpha)(x,y)&=\sum_{i=1}^n \frac{\gd(\psi \circ \beta)}{\gd m_i} (\beta^{-1}\circ \alpha(x,y)) \, \gd_{y_k}  \gd _{y_l} (\beta^{-1} \circ \alpha)_i (x,y)\\
&+ \sum_{i,j=1}^n \frac{\gd^2(\psi \circ \beta)}{\gd m_i \gd{m_j}} (\beta^{-1}\circ \alpha(x,y))  \gd_{y_k}(\beta^{-1} \circ\alpha)_j(x,y) \,  \gd_{y_l}(\beta^{-1} \circ\alpha)_i(x,y).
\end{align*}
Since
\bqn
\alpha_{\ast,(x,y)}(\gd_{y_k})=\sum_{j=1}^n \gd_{y_k} (\beta^{-1} \circ \alpha)_j(x,y) \, \beta_{\ast,(\beta^{-1} \circ \alpha)(x,y)}(\gd_{m_j}),
\eqn
this implies
\bq
\label{eq:Hess}
\gd_{y_k} \gd_{y_l} (\psi \circ \alpha)(x,0)= \mathrm{Hess}\,  \psi_{|\alpha(x,0)} (\alpha_{\ast,(x,0)}(\gd_{y_k}),\alpha_{\ast,(x,0)}(\gd_{y_l})),
\eq
by definition of the Hessian. Let us now write $x=(x',x'')$, and consider the restriction of $\psi$ onto  the $\Cinft$-submanifold
\bdm
M_{c'}=\mklm { m \in \mathcal{O}: m=\alpha(c',x'',y)}.
\edm
We write $\psi_{c'}=\psi_{|M_{c'}}$, and denote the critical set of $\psi_{c'}$ by $C_{c'}$, which contains $C \cap M_{c'}$ as a subset. Introducing on $M_{c'}$ the local coordinates
\bqn
\alpha':(x'',y) \mapsto \alpha(c',x'',y),
\eqn
we obtain 
\bq
\gd_{y_k} \gd_{y_l} (\psi_{c'} \circ \alpha')(x'',0)= \mathrm{Hess}\,  \psi_{c'|\alpha(x'',0)} (\alpha'_{\ast,(x'',0)}(\gd_{y_k}),\alpha'_{\ast,(x'',0)}(\gd_{y_l})).
\eq
Let us now assume $C_{c'}=C \cap M_{c'}$, a transversal intersection.  Then $C_{c'}$ is a submanifold of $M_{c'}$, and the normal space to $C_{c'}$ as a submanifold of $M_{c'}$ at a point $\alpha'(x'',0)$ is spanned by the vector fields $\alpha'_{\ast,(x'',0)} (\gd _{y_k})$.
Since clearly
\bqn
\gd_{y_k} \gd_{y_l} (\psi_{c'} \circ \alpha')(x'',0)=\gd_{y_k} \gd_{y_l} (\psi \circ \alpha)(x,0),\qquad x=(c',x''),
\eqn
we thus have proven the following
\begin{lemma}
\label{lemma:A}
Assume that $C_{c'}=C \cap M_{c'}$. Then the restriction
\bqn
\mathrm{Hess} \, \psi({\alpha(c',x'',0)})_{|N_{\alpha(c',x'',0)}C}
\eqn
of the Hessian of $\psi$ to the normal space $N_{\alpha(c',x'',0)}C$ defines a non-degenerate quadratic form if, and only if the restriction
\bqn
\mathrm{Hess} \, \psi_{c'}({\alpha'(x'',0)})_{|N_{\alpha'(x'',0)}C_{c'}}
\eqn
of the Hessian of $\psi_{c'}$ to the normal space $N_{\alpha'(x'',0)}C_{c'}$ defines a non-degenerate quadratic form.
\end{lemma}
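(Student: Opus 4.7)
The plan is to reduce the lemma to a direct comparison of matrices in the two distinguished coordinate systems, using the identity displayed immediately before the statement, together with the observation that for a function vanishing to first order along its critical submanifold, the Hessian descends canonically to a symmetric bilinear form on the normal bundle whose non-degeneracy can be tested on any linear complement of the tangent space to the critical set.

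First I would verify this coordinate-independence. At any $p \in C$, the form $\mathrm{Hess}\,\psi_p$ on $T_pM$ vanishes on $T_pC \times T_pM$: if $v \in T_pC$ and $W$ is any local extension of $w \in T_pM$, then $W\psi$ is identically zero along $C$ because $d\psi|_C = 0$, hence $v(W\psi)(p)=0$. Consequently $\mathrm{Hess}\,\psi_p$ factors through $T_pM/T_pC \cong N_pC$, and the non-degeneracy of the quotient form is equivalent to the non-degeneracy of the restriction of $\mathrm{Hess}\,\psi_p$ to any linear complement of $T_pC$ in $T_pM$. The same applies to $\mathrm{Hess}\,\psi_{c'}$ at points of $C_{c'}$ inside $M_{c'}$.

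Next I would exploit the shape of the two critical sets in their respective coordinates. In $\alpha$-coordinates one has $C=\{y=0\}$, so at $p=\alpha(x,0)$ the vectors $\alpha_{\ast,(x,0)}(\gd_{y_k})$ furnish a basis of a complement to $T_pC$ in $T_pM$, and \eqref{eq:Hess} identifies the Gram matrix of $\mathrm{Hess}\,\psi_p$ in this basis with $[\gd_{y_k}\gd_{y_l}(\psi\circ\alpha)(x,0)]_{k,l}$. On the other hand, the transversality hypothesis $C_{c'}=C\cap M_{c'}$ combined with the defining relation $\alpha'(x'',y)=\alpha(c',x'',y)$ forces $C_{c'}=\{y=0\}$ in the $\alpha'$-coordinates on $M_{c'}$. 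Hence at $q=\alpha'(x'',0)$ the vectors $\alpha'_{\ast,(x'',0)}(\gd_{y_k})$ span a complement to $T_qC_{c'}$ in $T_qM_{c'}$, and the analogue of \eqref{eq:Hess} on the slice yields $[\gd_{y_k}\gd_{y_l}(\psi_{c'}\circ\alpha')(x'',0)]_{k,l}$ as the matrix of $\mathrm{Hess}\,\psi_{c'}|_q$ in this basis.

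To conclude I would invoke the coordinate identity $\gd_{y_k}\gd_{y_l}(\psi_{c'}\circ\alpha')(x'',0)=\gd_{y_k}\gd_{y_l}(\psi\circ\alpha)(x,0)$ with $x=(c',x'')$ recorded just before the lemma, which is immediate from $\alpha'(x'',y)=\alpha(c',x'',y)$. The two matrices representing the Hessian restrictions are therefore literally equal, and one is non-degenerate precisely when the other is. The only genuinely subtle step is the first one: verifying that the notion of non-degeneracy on the normal space does not depend on the chosen complement, so that two matrices computed in two different coordinate-induced bases may be compared at all; everything beyond that is a direct unfolding of definitions.
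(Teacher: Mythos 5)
Your proof is correct and follows essentially the same route as the paper: both rest on the identity \eqref{eq:Hess}, its analogue on the slice $M_{c'}$, and the equality $\gd_{y_k}\gd_{y_l}(\psi_{c'}\circ\alpha')(x'',0)=\gd_{y_k}\gd_{y_l}(\psi\circ\alpha)(x,0)$, which identifies the two Gram matrices. Your preliminary verification that $\mathrm{Hess}\,\psi_p$ annihilates $T_pC$ at critical points, so that non-degeneracy on $N_pC$ can be tested on the (not necessarily orthogonal) complement spanned by the $\alpha_{\ast}(\gd_{y_k})$, makes explicit a point the paper only asserts when it says the normal space is ``spanned'' by these vectors, and is a welcome clarification rather than a deviation.
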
 \qed
\begin{proof}[Proof of second fundamental theorem] Let us begin by noting that with respect to the standard coordinates in $\rn$ and $\g$, the Hessian of $\psi$ is given by the matrix
\bqn 
\left ( \begin{array}{ccc}
0 & \eklm{ Xe_i,e_j} & -\eklm{ e_i,X_j \xi} \\
\eklm{ X e_j,e_i} & 0 & \eklm{X_j x, e_i} \\
-\eklm{e_j,X_i \xi} & \eklm{X_i x, e_j} & 0 
\end{array} \right ),
\eqn
where $\mklm{e_j}$ and $\mklm{X_j}$ denote the standard basis in $\rn$ and $\g$, respectively. A direct computation then shows that its restriction to the normal space ${N_{(x,\xi,X)} \mathrm{Reg} \,\Crit{(\psi)}}$ defines  a non-degenerate quadratic form for all $(x,\xi,X) \in \mathrm{Reg} \,\Crit{(\psi)}$. Now,   for $\sigma_{i_1} \cdots \sigma_{i_N}\not=0$,  the sequence of monoidal transformations $\zeta_{i_1} \circ \zeta_{i_1i_2} \circ \dots \circ \zeta_{i_1\dots i_N}$, composed with the transformation $\delta_{i_1\dots i_N}$,  constitutes a diffeomorphism, so that in the given charts of the resolution the restriction of 
\bqn 
\mathrm{Hess} ^{(i_1\dots i_N)} \tilde \psi^{tot} (\sigma_{i_j},p^{(i_j)},\tilde v^{(i_N)}, \alpha^{(i_j)}, \beta^{(i_N)},\xi)
\eqn
to the normal space of the total transform 
$$\tilde {\mathcal{C}}^{tot}=((\zeta_{i_1} \circ \zeta_{i_1i_2} \circ \dots \circ \zeta_{i_1\dots i_N} \circ \delta_{i_1\dots i_N}) \otimes \id_\xi)^{-1}(\Crit{(\psi)})$$
defines a non-degenerate quadratic form as well  at every point with $\sigma_{i_1} \cdots \sigma_{i_N}\not=0$.  Next, one computes
\begin{align*}
\left (\frac{\gd^2 \, ^{(i_1\dots i_N)}\tilde \psi^ {tot}}{\gd \gamma_k\gd \gamma_l}  \right )_{k,l} & = \tau_{i_1}(\sigma) \cdots  \tau_{i_N }(\sigma) 
\left (\frac{\gd^2 \, ^{(i_1\dots i_N)}\tilde \psi^ {wk}}{\gd \gamma_k\gd \gamma_l}  \right )_{k,l} \\ &+
\left (\begin{array}{cc}
\left (   \frac{ \gd ^2 (\tau_{i_1}(\sigma) \cdots  \tau_{i_N }(\sigma))}{\gd \sigma_{i_r}\sigma_{i_s}} \right )_{r,s} & 0 \\ 0 & 0
\end{array}\right ) \, ^{(i_1\dots i_N)}\tilde \psi^ {wk} +R
\end{align*}
where $R$ represents a matrix whose entries contain first order derivatives of $^{(i_1\dots i_N)}\tilde \psi^ {wk}$ as factors. But since 
\bqn 
\tilde {\mathcal{C}}^{tot}_{|\sigma_{i_1}\cdots \sigma_{i_N}\not=0} =\Crit(^{(i_1\dots i_N)}\tilde \psi^ {wk})_{|\sigma_{i_1}\cdots \sigma_{i_N}\not=0} 
\eqn
because $^{(i_1\dots i_N)}\tilde \psi^ {tot}$ and $^{(i_1\dots i_N)}\tilde \psi^ {wk}$ vanish on their critical sets, we conclude that  the transversal Hessian of $^{(i_1\dots i_N)}\tilde \psi^ {wk}$ does not degenerate along $\Crit(^{(i_1\dots i_N)}\tilde \psi^ {wk})_{|\sigma_{i_1}\cdots \sigma_{i_N}\not=0}$. Therefore, it  remains to study the transversal Hessian of $^{(i_1\dots i_N)}\tilde \psi^ {wk}$ in the case that any of the $\sigma_{i_j}$ vanishes. Now, the proof of the first fundamental theorem showed that
\bqn 
\gd _{\xi, \alpha^{(i_1)}, \dots, \alpha^{(i_N)}, \beta^{(i_N)}} \, ^{(i_1\dots i_N)}\tilde \psi^ {wk}=0 \quad \Longrightarrow \quad \gd _{\sigma_{i_1}, \dots \sigma_{i_N}, p^{(i_1)}, \dots, p^{(i_N)},\tilde v^{(i_N)}} \, ^{(i_1\dots i_N)}\tilde \psi^ {wk}=0. 
\eqn
If therefore 
$$
\, ^{(i_1\dots i_N)}\tilde \psi^ {wk}_{\sigma_{i_j}, p^{(i_j)},\tilde v^{(i_N)}}(\alpha^{(i_j)}, \beta^{(i_N)},\xi)
$$ 
denotes the weak transform of the phase function $\psi$ regarded as a function of the variables $(\alpha^{(i_1)},\dots, \alpha^{(i_N)}, \beta^{(i_N)},\xi)$ alone, while the variables $(\sigma_{i_1},\dots,\sigma_{i_N}, p^{(i_1)},\dots, p^{(i_N)},\tilde v^{(i_N)})$ are kept fixed,
\bqn 
\Crit \big ( \, ^{(i_1\dots i_N)}\tilde \psi^ {wk}_{\sigma_{i_j}, p^{(i_j)},\tilde v^{(i_N)}}\big )=\Crit \big ( \, ^{(i_1\dots i_N)}\tilde \psi^ {wk}\big )  \cap \mklm{\sigma_{i_j}, p^{(i_j)},\tilde v^{(i_N)} = \, \, \text{constant}}. 
\eqn
Thus, the critical set of $\, ^{(i_1\dots i_N)}\tilde \psi^ {wk}_{\sigma_{i_j}, p^{(i_j)},\tilde v^{(i_N)}}$ is equal to the fiber over $(\sigma_{i_j}, p^{(i_j)},\tilde v^{(i_N)})$ of the vector bundle
\bqn 
\big ((\sigma_{i_j}, p^{(i_j)},\tilde v^{(i_N)}), \g _{\tilde v^{(i_N)}} \times  \big ( \bigoplus\limits_{j=1}^N E^{(i_j)} \oplus F^{(i_N)} \big )^\perp \big ) \mapsto  (\sigma_{i_j}, p^{(i_j)},\tilde v^{(i_N)}),
\eqn
and in particular  a smooth submanifold. Lemma \ref{lemma:A} then implies that the study of the transversal Hessian  of $\, ^{(i_1\dots i_N)}\tilde \psi^ {wk}$ can be reduced to the study of the transversal Hessian of $\, ^{(i_1\dots i_N)}\tilde \psi^ {wk}_{\sigma_{i_j}, p^{(i_j)},\tilde v^{(i_N)}}$. The crucial fact is now contained in the following 
\begin{proposition}
\label{prop:1}
Assume that 
$\sigma_{i_1} \cdots \sigma_{i_N}=0$. Then 
\bqn 
\ker  \mathrm{Hess} \, ^{(i_1\dots i_N)}\tilde \psi^ {wk}_{\sigma_{i_j}, p^{(i_j)},\tilde v^{(i_N)}}(0,\dots, 0, \beta^{(i_N)},\xi)\simeq T_{(0, \dots, 0,\beta^{(i_N)},\xi)}  \mathrm{Crit} \big (\,^{(i_1\dots i_N)}\tilde \psi^ {wk}_{\sigma_{i_j}, p^{(i_j)},\tilde v^{(i_N)}} \big )
\eqn
for all $(0,\dots, 0, \beta^{(i_N)},\xi) \in \mathrm{Crit} \big (\,^{(i_1\dots i_N)}\tilde \psi^ {wk}_{\sigma_{i_j}, p^{(i_j)},\tilde v^{(i_N)}} \big )$, and arbitrary $p^{(i_j)}$, $\tilde v^{(i_j)}$.
\end{proposition}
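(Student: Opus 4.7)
The plan is to exploit the bilinear structure of the restricted weak transform. Once the parameters $(\sigma_{i_j}, p^{(i_j)}, \tilde v^{(i_N)})$ are held fixed, the function $\, ^{(i_1\dots i_N)}\tilde\psi^{wk}_{\sigma_{i_j}, p^{(i_j)}, \tilde v^{(i_N)}}$ should be jointly bilinear in the collective variable $\zeta = (\alpha^{(i_1)}, \dots, \alpha^{(i_N)}, \beta^{(i_N)})$ and $\xi$. First I would verify this by reading off the explicit formula for $\, ^{(i_1\dots i_N)}\tilde\psi^{wk,\,pre}$ stated in the first fundamental theorem together with the concrete shape of the $O(\cdot)$-remainders written out after the second monoidal transformation: each summand takes the form $\eklm{C\cdot X,\, \xi}$, with $X$ either one of the endomorphisms $A^{(i_j)}$ or $B^{(i_N)}$, and the coefficient $C$ a smooth function of the fixed parameters alone. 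The pullback along $\delta_{i_1\dots i_N}$ only substitutes $\tau = \delta_{i_1\dots i_N}(\sigma)$ inside $C$, so the bilinearity survives at every iteration step.

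Having secured bilinearity, the remainder is a short piece of linear algebra. One writes
\bqn
\, ^{(i_1\dots i_N)}\tilde\psi^{wk}_{\sigma_{i_j}, p^{(i_j)}, \tilde v^{(i_N)}}(\zeta,\xi) = \eklm{L\zeta,\, \xi},
\eqn
with $L = L(\sigma, p, \tilde v): \R^{D} \to \rn$, $D=\sum d^{(i_j)}+e^{(i_N)}$, a linear map of constant entries. From $\gd_\zeta = L^\ast\xi$ and $\gd_\xi = L\zeta$ one reads off that
\bqn
\Crit\big(\, ^{(i_1\dots i_N)}\tilde\psi^{wk}_{\sigma_{i_j}, p^{(i_j)}, \tilde v^{(i_N)}}\big) = \ker L \times \ker L^\ast
\eqn
is a \emph{linear} subspace of $\R^D \times \rn$, while bilinearity makes the Hessian a constant block matrix $\begin{pmatrix}0 & L^\ast\\ L & 0\end{pmatrix}$ whose kernel is likewise $\ker L \times \ker L^\ast$. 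Since the tangent space of a linear subspace at any of its points is the subspace itself, both sides of the claimed isomorphism reduce to one and the same explicit space:
\bqn
T_{(0,\dots,0,\beta^{(i_N)},\xi)}\Crit\big(\, ^{(i_1\dots i_N)}\tilde\psi^{wk}_{\sigma, p, \tilde v}\big) = \ker L \times \ker L^\ast = \ker \mathrm{Hess}\, ^{(i_1\dots i_N)}\tilde\psi^{wk}_{\sigma_{i_j}, p^{(i_j)}, \tilde v^{(i_N)}}(0,\dots,0,\beta^{(i_N)},\xi).
\eqn
As a consistency check one sees, via the direct-sum argument from \eqref{eq:B}--\eqref{eq:V}, that when $\sigma_{i_1}\cdots \sigma_{i_N}=0$ the conditions $L\zeta = 0$ and $L^\ast\xi = 0$ recover precisely the critical-set description (I)--(III) at the point in question.

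The substantive obstacle, I expect, lies entirely in the bilinearity verification of the first paragraph: one must unwind the inductive construction at the $N$-th monoidal transformation and confirm that no remainder ever acquires a factor quadratic or higher in $(\alpha,\beta)$, even though the loose notation $O(|\tau_{i_j}A^{(i_j)}|)$, $O(|\tau_{i_N} B^{(i_N)}\tilde v^{(i_N)}|)$ would formally permit such terms. Once this is nailed down, the hypothesis $\sigma_{i_1}\cdots \sigma_{i_N}=0$ plays no algebraic role; it only serves to cover the region complementary to the one already handled earlier in the proof of the second fundamental theorem, while the argument in fact yields $\ker\mathrm{Hess} = T\Crit$ at every critical point of $\, ^{(i_1\dots i_N)}\tilde\psi^{wk}_{\sigma,p,\tilde v}$.
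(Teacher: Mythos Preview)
Your argument is correct and a bit cleaner than the paper's. The paper proceeds by brute force: under $\sigma_{i_1}\cdots\sigma_{i_N}=0$ all the $O(|\tau_{i_j}A^{(i_j)}|)$ and $O(|\tau_{i_N}B^{(i_N)}\tilde v^{(i_N)}|)$ terms and their derivatives drop out, the Hessian matrix of $\,^{(i_1\dots i_N)}\tilde\psi^{wk}_{\sigma,p,\tilde v}$ is written down entry by entry, and its kernel is computed via the three conditions (a)--(c), which are then matched against the explicit fiber description of the critical set in terms of $\g_{\tilde v^{(i_N)}}$ and $\big(\bigoplus E^{(i_j)}\oplus F^{(i_N)}\big)^\perp$. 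You instead observe the bilinearity $\,^{(i_1\dots i_N)}\tilde\psi^{wk}_{\sigma,p,\tilde v}(\zeta,\xi)=\eklm{L\zeta,\xi}$ and reduce everything to the tautology $\ker\!\begin{pmatrix}0 & L^\ast\\ L & 0\end{pmatrix}=\ker L\times\ker L^\ast=\Crit$. The bilinearity is genuine and easy to certify directly from the total transform: $X^{(i_1\dots i_N)}$ is visibly linear in $(\alpha^{(i_1)},\dots,\alpha^{(i_N)},\beta^{(i_N)})$ while $x^{(i_1\dots i_N)}$ is independent of them, so $\psi=\eklm{X^{(i_1\dots i_N)}x^{(i_1\dots i_N)},\xi}$ and hence its weak transform are bilinear in $(\zeta,\xi)$ for \emph{every} $\sigma$, exactly as you suspected. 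Your route thus sidesteps the explicit case analysis and, as you note, makes the hypothesis $\sigma_{i_1}\cdots\sigma_{i_N}=0$ irrelevant to the algebra; the paper's computation, on the other hand, makes the link to the geometric spaces $E^{(i_j)},F^{(i_N)},V^{(i_1\dots i_N)}$ explicit, which is not needed for the proposition itself but echoes the bookkeeping of the first fundamental theorem.
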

\begin{proof}
Let us begin by computing
\begin{align*}
\gd _{\xi_r} \,^{(i_1\dots i_N)}\tilde \psi^ {wk}= &\big [\sum_{j=1}^N  \sum _{k=1}^{d^{(i_j)}} \alpha^{(i_j)}_k A^{(i_j)}_k \, p^{(i_j)}+\sum_{k=1}^{e^{(i_N)}} \beta_k^{(i_N)}  B_k^{(i_N)}  \tilde v ^{(i_N)}\big ]_r + \gd _{\xi_r} \sum _{j=1}^N O( |\tau_{i_j} A^{(i_j)}|) \\ + &  \gd _{\xi_r}  O( |\tau_{i_N} B^{(i_N)}\tilde v ^{(i_N)}|) .
\end{align*}
If $\sigma_{i_1}\cdots \sigma_{i_j}=0$, which means that all the $\tau_{i_j}$ are zero, the second derivatives read
\begin{align*}
\gd_{\xi_r} \gd _{\xi_s} \, ^{(i_1\dots i_N)}\tilde \psi^ {wk}_{\sigma_{i_j}, p^{(i_j)},\tilde v^{(i_N)}} &=0, \\
\gd_{\alpha^{(i_j)}_s} \gd _{\xi_r} \, ^{(i_1\dots i_N)}\tilde \psi^ {wk}_{\sigma_{i_j}, p^{(i_j)},\tilde v^{(i_N)}} &=[A_s^{(i_j)} p^{(i_j)}]_r, \\
\gd_{\beta^{(i_N)}_s} \gd _{\xi_r} \, ^{(i_1\dots i_N)}\tilde \psi^ {wk}_{\sigma_{i_j}, p^{(i_j)},\tilde v^{(i_N)}} &=[B_s^{(i_N)} \tilde v^{(i_N)}]_r. 
\end{align*}
Next, one has
\begin{align*}
\gd_{\alpha^{(i_j)}_s} \, ^{(i_1\dots i_N)}\tilde \psi^ {wk}&=\eklm{A_s^{(i_j)} p^{(i_j)}, \xi } +  \gd_{\alpha^{(i_j)}_s} \, O(|\tau_{i_j} A^{(i_j)}|), 
\end{align*}
so that for $\sigma_{i_1}\cdots \sigma_{i_j}=0$ all the second order derivatives involving $\alpha^{(i_j)}$  must vanish, except the ones that were already computed. Finally, the computation of the $\beta^{(i_N)}$-derivatives yields
\bqn 
\gd_{\beta^{(i_N)}_r} \gd _{\beta^{(i_N)}_s} \, ^{(i_1\dots i_N)}\tilde \psi^ {wk}_{\sigma_{i_j}, p^{(i_j)},\tilde v^{(i_N)}} =0.
\eqn
Collecting everything we see that the Hessian of the function $\, ^{(i_1\dots i_N)}\tilde \psi^ {wk}_{\sigma_{i_j}, p^{(i_j)},\tilde v^{(i_N)}}$  with respect to the coordinates $\alpha^{(i_j)}, \beta^{(i_j)},\xi$ is given on its critical set by the matrix
\bqn 
\left ( \begin{array}{ccccc}
0 & [A_s^{(i_1)} p^{(i_1)}]_r & \dots & [A_s^{(i_N)} p^{(i_N)}]_r & [B_s^{(i_N)} \tilde v^{(i_N)}]_r \\
\,  [A_r^{(i_1)} p^{(i_1)}]_s & 0 & \dots & 0 & 0 \\
\vdots & \vdots &\vdots &\vdots &\vdots \\
\, [A_r^{(i_N)} p^{(i_N)}]_s & 0 & \dots & 0 & 0 \\
 \, [B_r^{(i_N)} \tilde v^{(i_N)}]_s & 0 & \dots & 0 & 0
\end{array} \right ), 
\eqn
where   $\sigma_{i_1}\cdots \sigma_{i_j}=0$. Let us now compute the kernel of the linear transformation corresponding to this matrix. Cleary,  the vector $(\tilde \xi, \tilde \alpha^{(i_1)}, \dots, \tilde \alpha^{(i_N)}, \tilde \beta^{(i_N)})$ lies in the kernel if and only if

\medskip
\begin{tabular}{ll}
{(a)} & $\sum \tilde \alpha_r^{(i_1)} A_r^{(i_1)} p^{(i_1)}+  \dots +\sum \tilde \alpha_r^{(i_N)} A_r^{(i_N)} p^{(i_N)}+ \sum \tilde \beta_r^{(i_N)} B_r^{(i_N)} \tilde v^{(i_N)}=0$ ; \\[2pt]
{(b)} & $\eklm{Y^{(i_j)}p^{(i_j)},\tilde \xi}=0$ for all $Y^{(i_j)} \in \g_{p^{(i_j)}}^\perp, \, 1 \leq j \leq N$;\\[2pt]
{(c)} &$\eklm{Z \tilde v ^{(i_N)},\tilde \xi}=0$ for all $Z \in \g_{p^{(i_N)}}.$
\end{tabular}
\medskip

Let $V^{(i_1\dots i_N)}$, $E^{(i_j)}$ and $F^{(i_N)}$ be the subspaces in $\rn$ defined in \eqref{eq:V} and \eqref{eq:EF}. Since $\g_{p^{(i_j)}}^\perp\subset \g_{p^{(i_{j-1})}}$, the condition (b) is equivalent to $\tilde \xi \in V^{(i_1\dots i_N)}$. Next, we have
\bqn 
\sum \tilde \alpha_r^{(i_j)} A_r^{(i_1)} p^{(i_1)}+  \dots \sum \tilde \alpha_r^{(i_N)} A_r^{(i_N)} p^{(i_N)}+ \sum \tilde \beta_r^{(i_N)} B_r^{(i_N)} \tilde v^{(i_N)} \in \bigoplus_{j=1}^N E^{(i_j)} \oplus F^{(i_N)},
\eqn
so that for condition (a) to hold, it is  necessary  and sufficient that 
\bqn 
\tilde \alpha^{(i_j)} =0, \quad 1 \leq j \leq N, \qquad \sum \tilde \beta_r^{(i_N)} B_r^{(i_N)} \tilde v^{(i_N)}=0.
\eqn
In addition, condition (c) is equivalent to 
\bqn 
\tilde \xi \in N_{\tilde v ^{(i_N)}} \big ( G_{p^{(i_N)}}\cdot \tilde v ^{(i_N)} \big ).
\eqn
On the other hand, 
\begin{gather*}
T_{(0, \dots, 0,\beta^{(i_N)},\xi)}  \mathrm{Crit} \big (\,^{(i_1\dots i_N)}\tilde \psi^ {wk}_{\sigma_{i_j}, p^{(i_j)},\tilde v^{(i_N)}} \big )\\
= \Big \{( \tilde \alpha^{(i_1)}, \dots, \tilde \alpha^{(i_N)}, \tilde \beta^{(i_N)}, \tilde \xi): \tilde \alpha^{(i_j)}=0, \,  \sum \tilde  \beta^{(i_N)}_r B^{(i_N)}_r \in \g_{\tilde v^{(i_N)}}, \,  \tilde \xi \perp  \bigoplus_{j=1}^N E^{(i_j)} \oplus F^{(i_N)}\Big \}.
\end{gather*}
But since for $\sigma_{i_1}\cdots \sigma_{i_N}=0$
\bqn 
\Big ( \bigoplus_{j=1}^N E^{(i_j)} \oplus F^{(i_N)}\Big ) ^\perp = N_{\tilde v ^{(i_N)}} \big ( G_{p^{(i_N)}}\cdot \tilde v ^{(i_N)} \big ) \cap V^{(i_1\dots i_N)}= N_{\tilde v ^{(i_N)}} \big ( G_{p^{(i_N)}}\cdot \tilde v ^{(i_N)} \big ),
\eqn 
the proposition follows.
\end{proof}
Let now $B$ be a symmetric bilinear form on a finite dimensional $\mathbb{K}$-vector space $V$, and $M=(M_{ij})_{i,j}$ the corresponding Gramsian matrix with respect to a basis $\mklm{v_1,\dots,v_n}$ of $V$ such that 
\bqn 
B(u,w) = \sum_{i,j} u_i w_j M_{ij}, \qquad  u=\sum u_i v_i, \quad w=\sum w_i v_i.
\eqn
We denote the linear operator given by $M$ with the same letter, and write 
\bqn 
V =\ker M \oplus W.
\eqn
Consider the restriction $B_{|W \times W}$ of $B$ to $W\times W$, and assume that $B_{|W\times W}(u,w) =0$ for all $u \in W$, but $w\not=0$. Since the Euclidean scalar product in $V$ is non-degenerate, we necessarily must have $Mw=0$, and consequently $ w \in \ker  M \cap W=\mklm{0}$, which is a contradiction. Therefore $B_{|W \times W}$ defines a non-degenerate symmetric bilinear form. The previous proposition therefore implies that for $\sigma_{i_1}\cdots \sigma_{i_N}=0$
\bqn 
\mathrm{Hess} \,^{(i_1\dots i_N)}\tilde \psi^ {wk}_{\sigma_{i_j}, p^{(i_j)},\tilde v^{(i_N)}}(0, \dots, 0,\beta^{(i_N)},\xi)_{|N_{(0, \dots, 0,\beta^{(i_N)},\xi)}  \mathrm{Crit} \big (\,^{(i_1\dots i_N)}\tilde \psi^ {wk}_{\sigma_{i_j}, p^{(i_j)},\tilde v^{(i_N)}} \big )}
\eqn
defines a non-degenerate symmetric bilinear form for all points $(0, \dots, 0,\beta^{(i_N)},\xi)$ lying in the critical set of $\,^{(i_1\dots i_N)}\tilde \psi^ {wk}_{\sigma_{i_j}, p^{(i_j)},\tilde v^{(i_N)}}$. The second fundamental theorem now follows with Lemma \ref{lemma:A}.
\end{proof}
We are now in position to give an asymptotic description of the integral $I(\mu)$. But before, we shall say a few words about the desingularization process.

\section{Resolution of singularities and the stationary phase theorem}
\label{sec:6}

Let $M$ be a smooth variety, $\mathcal{O}_M$ the structure sheaf of rings of $M$, and $I \subset \mathcal{O}_M$ an ideal sheaf. The aim in the theory of resolution of singularities is  to construct a birational morphism $\pi: \tilde M \rightarrow M$ such that $\tilde M$ is smooth, and the pulled back ideal sheaf $\pi^\ast I$ is locally principal. This is called the \emph{principalization} of $I$, and implies resolution of singularities. That is, for every quasi-projective variety $X$, there is a smooth variety $\tilde X$, and a birational and projective morphism $\pi:\tilde X \rightarrow X$. Vice versa, resolution of singularities implies principalization.

Consider next the derivative  $D(I)$ of $I$, which is the sheaf ideal that is generated by all derivatives of elements of $I$. Let further $Z \subset M$ be a smooth subvariety, and $\pi: B_Z M \rightarrow M$ the corresponding monoidal transformation with exceptional divisor $F \subset B_ZM$. Assume that $(I,m)$ is a  marked ideal sheaf with $m \leq \mathrm{ord}_Z I$. The total transform $\pi^\ast I$ vanishes along $F$ with multiplicity $\mathrm{ord}_Z I$, and by removing the ideal sheaf $\mathcal{O}_{B_ZM}(-\mathrm{ord}_Z I \cdot F)$ from $\pi^\ast I$  we obtain the \emph{birational, or weak transform} $\pi_\ast^{-1} I$ of $I$. Take local coordinates $(x_1,\dots, x_n)$ on $M$ such that $Z=(x_1= \dots =x_r=0)$. As a consequence,
\bqn
y_1=\frac {x_1}{x_r}, \dots, y_{r-1}=\frac{x_{r-1}}{x_r}, y_r=x_r, \dots,  y_n=x_n
\eqn
define local coordinates on $B_Z M$, and for $(f,m) \in (I,m)$ one has
\bqn
\pi_\ast^{-1} (f(x_1,\dots,x_n),m)= (y_r^{-m} f (y_1y_r, \dots y_{r-1}y_r, y_r, \dots, y_n),m).
\eqn
By computing the first derivatives of $\pi_\ast^{-1} (f(x_1,\dots,x_n),m)$, one then sees that for any composition  $\Pi:\tilde M \rightarrow M$ of blowing-ups of order greater or equal than $m$, 
\bqn
\Pi^{-1}_\ast ( D(I,m)) \subset D(\Pi^{-1}_\ast(I,m)),
\eqn
see Koll\'{a}r \cite{kollar}, Theorem 71. 

Let us now come back to our situation, and consider on $T^\ast \rn\times \g$ the ideal $I_\psi=(\psi)$ generated by the phase function $\psi=\mathbb{J}(x,\xi)(X)=\eklm{Xx,\xi}$, together with its vanishing set $V_\psi$. The derivative of $I$ is given by $D(I_\psi) = I_{\mathcal{C}}$, 
where $I_{\mathcal{C}}$ denotes the vanishing ideal of the critical set $\mathcal{C}=\Crit(\psi)$, and by the implicit function theorem $\mathrm{Sing} \,V_\psi \subset V_\psi \cap \mathcal{C}=\mathcal{C}$. Let  $((H_{i_1}), \cdots, (H_{i_{N+1}})=(H_L))$ be an arbitrary branch of isotropy types, and consider the corresponding sequence of monoidal transformations $(\zeta_{i_1} \circ \zeta_{i_1i_2} \circ \cdots \circ \zeta_{i_1\dots i_{N}}) \otimes \id_\xi$. Compose it with the sequence of monoidal transformations $\delta_{i_1\dots i_N}$, and denote the resulting transformation by $\zeta$. We then have the diagram

\bqn 
 \begin{array}{ccccc}
\zeta^\ast (I_{\mathcal{C}})  & \supset & \zeta^\ast (I_\psi)  &= \prod_{i=1}^N \tau_{i_j}(\sigma) \, \cdot\zeta^{-1}_\ast(I_\psi) & \ni \tau_{i_1}(\sigma) \cdots \tau_{i_N}(\sigma)  \,^{(i_1\dots i_N)}\tilde \psi^ {wk}  \\ [4pt]
\uparrow &  & \uparrow  & &\\[4pt]
I_{\mathcal{C}}  &\supset & I_\psi   & \ni \psi & 
\end{array} 
\eqn

\medskip
\noindent 
According to the previous considerations, we have the inclusion 
\bqn 
\zeta^{-1}_\ast (I_{\mathcal{C}}) \subset D(\zeta^{-1}_\ast (I_\psi)).
\eqn
Furthermore, the first fundamental theorem implies that $D(\zeta^{-1}_\ast (I_\psi))$ is a resolved ideal. Nevertheless, it is easy to see that $\zeta^{-1}_\ast(I_\psi)$ is not resolved, so that $\prod_{i=1}^N \tau_{i_j}(\sigma) \, \cdot\zeta^{-1}_\ast(I_\psi)$ is only a partial principalization. Let us now consider the set 
\begin{align*} 
\tilde{\mathcal{C}}&^{(i_1\dots i_N)}=\mathrm{Cl}\mklm{(\sigma_{i_j}, p^{(i_j)},  \tilde v ^{(i_N)}, \alpha^{(i_j)},  \beta ^{(i_N)}, \xi):   \sigma_{i_1}\cdots \sigma_{i_N} \not=0,  \quad (x^{(i_1\dots i_N)},\xi,X^{(i_1\dots i_N)} )\in   \mathcal{C}} \\
&=\mathrm{Cl}\mklm{(\sigma_{i_j}, p^{(i_j)}, \tilde v ^{(i_N)}, \, 0, \beta^{(i_N)}, \xi):  \sigma_{i_1}\cdots \sigma_{i_N} \not=0,  \quad  B^{(i_N)} \in \g_{(\tilde v^{(i_N)},\xi)}, \quad \xi \perp (\g \cdot x^{(i_1\dots i_N)}) } \\
&=\mathrm{Cl}\mklm{(\sigma_{i_j}, p^{(i_j)},  \tilde v ^{(i_N)}, \, 0, \beta^{(i_N)}, \xi):  \sigma_{i_1}\cdots \sigma_{i_N} \not=0,  \quad  B^{(i_N)} \in \g_{\tilde v^{(i_N)}}, \quad \xi \perp \bigoplus_{l=1}^N E^{(i_l)} \oplus F^{(i_N)} }, 
\end{align*}
where we made use of the decomposition
\bqn 
\g \cdot x^{(i_1\dots i_N)}= E^{(i_1)} \oplus \tau_{i_1} E^{(i_2 )}\oplus  \tau_{i_1} \sin \tau_{i_2}  E^{(i_3)}\oplus \dots \oplus   \tau_{i_1} \sin \tau_{i_2} \cdots \sin \tau_{i_N}  F^{(i_N)}
\eqn
and took into account that $G_{\tilde v^{(i_N)}}$ acts trivially on $\big ( \bigoplus_{l=1}^N E^{(i_l)} \oplus F^{(i_N)}\big )^\perp $. Equation \eqref{eq:C} then implies that 
\bqn 
\tilde{\mathcal{C}}^{(i_1\dots i_N)} = \Crit ( \,^{(i_1\dots i_N)}\tilde \psi^ {wk}).
\eqn
Nevertheless, this does not result in a resolution $\tilde{\mathcal{C}}$  of $\mathcal{C}$, but only in a partial resolution, since the induced global birational transform $ \tilde{\mathcal{C}} \rightarrow \mathcal{C}$ is not surjective in general. This is because the centers of our monoidal transformations were only chosen in $\rn_x \times \g$, to keep the phase analysis of the weak transform of $\psi$ as simple as possible. In turn, the $\xi$-singularities of $\mathcal{C}$ were not completely resolved.

As we shall see in the next section, the principalization of the ideal $I_\psi$ 
\bqn 
\zeta^\ast (I_\psi)  = \tau_{i_1}\cdots \tau_{i_N} \zeta^{-1}_\ast(I_\psi), 
\eqn
and the fact that the weak transform $ \,^{(i_1\dots i_N)}\tilde \psi^ {wk}$ has a clean critical set, are essential for an application of the stationary phase principle in the context of singular equivariant asymptotics. By Hironaka's theorem on resolution of singularities,  a resolution $\zeta$ of the vanishing set of  $\psi$  always exists, which is equivalent to the principalization of the ideal $I_\psi$.  But in general, such a resolution would not be explicit enough \footnote{In particular, the so-called numerical data of $\zeta$ are not known a priori, which in our case are given in terms of the dimensions $c^{(i_j)}$ and $d^{(i_j)}$.}  to allow an application of the stationary phase theorem. This is the reason why we were forced to construct an explicit, though partial, resolution $\zeta$ of $\mathcal{C}$ in $T^\ast \rn\times \g $, using as centers  isotropy algebra bundles over sets of maximal singular orbits. Partial desingularizations  of the zero level set $\Omega$ of the moment map and the symplectic quotient $\Omega/G$ have been obtained  e.g. by Meinrenken-Sjamaar \cite{meinrenken-sjamaar} for compact symplectic manifolds with a Hamiltonian compact Lie group action by performing blowing-ups along minimal symplectic suborbifolds containing the strata of maximal depth  in $\Omega$.

\section{Asymptotics for the integrals  $I_{i_1\dots i_N}(\mu)$ }

In this section, we will give an asymptotic description of the integrals $I_{i_1\dots i_N}(\mu)$  defined in \eqref{eq:N}.  Since the considered integrals are absolutely convergent integral, we can interchange the order of integration by Fubini, and write
\bqn
I_{i_1\dots i_N}(\mu)= \int_{(-T,T)^N}  J_{\tau_{i_1}, \dots, \tau_{i_N}}\Big ( \frac \mu{\tau_{i_1}\cdots \tau_{i_N}} \Big ) \prod_{j=1}^N |\tau_{i_j}|^{c^{(i_j)} + \sum _{r=1}^j d^{(i_r)} -1} \d \tau_{i_N} \dots \d \tau_{i_1},
\eqn
where we set
\bqn 
 J_{\tau_{i_1}, \dots, \tau_{i_N}}(\nu)= \int  e^{i \, ^{(i_1\dots i_N)} \tilde \psi ^{wk,pre}/\nu}  \, a_{i_1\dots i_N} \,    \Phi_{i_1\dots i_N}  \d \xi  dA^{(i_1)} \dots  \d A^{(i_N)} \d B^{(i_N)} \d \tilde v^{(i_N)}  \d p^{(i_{N})} \dots   \d p^{(i_{1})},
\eqn
and introduced the new parameter
\bqn
\nu =\frac \mu {\tau_{i_1}\cdots \tau_{i_N}}.
\eqn
Now,  for an arbitrary $0<\epsilon < T$ to be chosen later we define
\begin{align*}
I_{i_1\dots i_N}^1(\mu)&= \int_{((-T,T)\setminus (-\epsilon,\epsilon))^N}  J_{\tau_{i_1}, \dots, \tau_{i_N}}\Big ( \frac \mu{\tau_{i_1}\cdots \tau_{i_N}} \Big ) \prod_{j=1}^N |\tau_{i_j}|^{c^{(i_j)} + \sum _{r=1}^j d^{(i_r)} -1} \d \tau_{i_N} \dots \d \tau_{i_1},\\
I_{i_1\dots i_N}^2(\mu)&= \int_{(-\epsilon,\epsilon)^N}  J_{\tau_{i_1}, \dots, \tau_{i_N}}\Big ( \frac \mu{\tau_{i_1}\cdots \tau_{i_N}} \Big ) \prod_{j=1}^N |\tau_{i_j}|^{c^{(i_j)} + \sum _{r=1}^j d^{(i_r)} -1} \d \tau_{i_N} \dots \d \tau_{i_1} . 
\end{align*}
\begin{lemma}
\label{lemma:kappa}
One has $c^{(i_j)} + \sum _{r=1}^j d^{(i_r)} -1\geq \kappa$ for arbitrary $j=1,\dots, N$.
\end{lemma}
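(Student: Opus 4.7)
The plan is to establish the equivalent inequality $c^{(i_j)} + \sum_{r=1}^{j} d^{(i_r)} \geq \kappa + 1$ by computing the sum of the $d^{(i_r)}$ exactly, and bounding $c^{(i_j)}$ below via slice theory.

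First I would compute $\sum_{r=1}^{j} d^{(i_r)}$ telescopically. The nested decomposition $\g_{p^{(i_{r-1})}} = \g_{p^{(i_r)}} \oplus \g_{p^{(i_r)}}^\perp$ built into the iteration (with the convention $\g_{p^{(i_0)}} = \g$) gives $d^{(i_r)} = \dim \g_{p^{(i_{r-1})}} - \dim \g_{p^{(i_r)}}$, and hence
\bqn
\sum_{r=1}^{j} d^{(i_r)} = \dim \g - \dim \g_{p^{(i_j)}} = \dim G/G_{p^{(i_j)}}.
\eqn
This is a routine dimension count.

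The main work is the bound $c^{(i_j)} \geq \dim G_{p^{(i_j)}}/H_L + 1$. Recall that $c^{(i_j)}$ is the fiber dimension of the normal bundle $\nu_{i_1\dots i_j}$, i.e., the codimension at $p^{(i_j)}$ of the stratum $(H_{i_j})$ inside the ambient $(S_{i_1\dots i_{j-1}})_{p^{(i_{j-1})}, i_j}$. If $W$ denotes the slice at $p^{(i_j)}$ transverse to the $G_{p^{(i_{j-1})}}$-orbit, and $W'$ the orthogonal complement in $W$ of the $G_{p^{(i_j)}}$-fixed subspace $W^{G_{p^{(i_j)}}}$, then the tangent space to the stratum decomposes as $T_{p^{(i_j)}}(G_{p^{(i_{j-1})}} \cdot p^{(i_j)}) \oplus W^{G_{p^{(i_j)}}}$, so $c^{(i_j)} = \dim W'$. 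By the slice theorem applied iteratively, the $G_{p^{(i_j)}}$-isotropy types on $W'$ coincide, up to conjugacy, with the orbit types of the $G_{p^{(i_{j-1})}}$-action in a neighborhood of $p^{(i_j)}$ in the ambient; since the principal type $(H_L)$ is retained throughout all iterated slices (being the smallest and densest type), the principal isotropy of $G_{p^{(i_j)}}$ on $W'$ is $H_L$, so generic orbits in $W'$ have dimension $\dim G_{p^{(i_j)}}/H_L$. Now $0 \in W'$ is fixed by all of $G_{p^{(i_j)}}$ while $(H_{i_j}) \neq (H_L)$ for every $j = 1, \dots, N$ arising in the iteration, so the principal stratum cannot exhaust $W'$ and must have codimension at least one, forcing $c^{(i_j)} \geq \dim G_{p^{(i_j)}}/H_L + 1$.

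Combining the two estimates yields
\bqn
c^{(i_j)} + \sum_{r=1}^{j} d^{(i_r)} \geq \big( \dim G_{p^{(i_j)}} - \dim H_L + 1 \big) + \big( \dim G - \dim G_{p^{(i_j)}} \big) = \dim G/H_L + 1 = \kappa + 1,
\eqn
which is the desired bound. The step I expect to require the most care is the identification of the principal $G_{p^{(i_j)}}$-isotropy on $W'$ with $H_L$; this rests on the propagation of the principal orbit type $(H_L)$ through each of the iterated slices, a fact already implicit in the construction of the desingularization and underlying Equation \eqref{eq:kappa} of the first fundamental theorem. Once this is secured, the rest is a direct dimension count.
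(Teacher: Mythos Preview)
Your argument follows the same skeleton as the paper's: bound $c^{(i_j)}$ from below by one plus the dimension of a principal $G_{p^{(i_j)}}$-orbit in the normal fiber, and combine this with a dimension count for $\sum_{r\le j} d^{(i_r)}$. The paper packages the latter via the subspaces $E^{(i_r)}$ and equation \eqref{eq:kappa}, whereas you telescope $d^{(i_r)}=\dim\g_{p^{(i_{r-1})}}-\dim\g_{p^{(i_r)}}$ directly; these are the same computation, and your identification of the principal $G_{p^{(i_j)}}$-isotropy on the normal fiber with $H_L$ is exactly what the paper's reduction steps (and Lemma 1) provide.

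One sentence needs repair. You write that ``the principal stratum cannot exhaust $W'$ and must have codimension at least one''; this is false as stated, since the principal stratum is open and dense in $W'$, hence of full dimension. What you need (and what the paper uses) is that a principal \emph{orbit} lies in a sphere of the orthogonal $G_{p^{(i_j)}}$-space $W'$, so its dimension is at most $\dim W'-1=c^{(i_j)}-1$; this yields $c^{(i_j)}\geq \dim G_{p^{(i_j)}}/H_L+1$ immediately. With that correction the proof goes through.
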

\begin{proof}
We first note that
\bqn
c^{(i_j)} = \dim (\nu_{i_1\dots i_j})_{p^{(i_j)}} \geq \dim G_{p^{(i_j)}} \cdot x^{(i_{j+1}\dots x_N)} +1.
\eqn
Indeed, $(\nu_{i_1\dots i_j})_{p^{(i_j)}}$ is an orthogonal $G_{p^{(i_j)}}$-space, so that the dimension of the $G_{p^{(i_j)}}$-orbit of $x^{(i_{j+1}\dots x_N)}\in (S^+_{i_1 \dots i_j})_{p^{(i_j)}}$ can be at most $c^{(i_j)}-1$. Now, under the assumption $\sigma_{i_1}\cdots \sigma_{i_N}\not=0$ one computes
\begin{align*}
\dim G&_{p^{(i_j)}}  \cdot x^{(i_{j+1}\dots i_N)}= \dim \g_{p^{(i_j)}} \cdot x^{(i_{j+1}\dots i_N)}= \dim [  \g_{p^{(i_N)}}\oplus \g_{p^{(i_N)}}^\perp \oplus \cdots \oplus \g_{p^{(i_{j+1})}}^\perp  ] \cdot x^{(i_{j+1}\dots i_N)} \\
=&\dim[ \g_{p^{(i_{j+1})}}^\perp  \cdot x^{(i_{j+1}\dots i_N)} + \sin \tau_{i_{j+1}}  \g_{p^{(i_{j+2})}}^\perp \cdot x^{(i_{j+2}\dots i_N)} + \cdots  \\ 
&+  \sin \tau_{i_{j+1}} \cdots \sin \tau_{i_{N-1}}  \g_{p^{(i_N)}}^\perp \cdot x^{(i_N)} +  \sin \tau_{i_{j+1}} \cdots \sin \tau_{i_{N}} \g_{p^{(i_N)}} \tilde v^{(i_N)}] \\ 
=& \dim[ E^{(i_{j+1})} \oplus \sin \tau_{i_{j+1}} E^{(i_{j+2})}\oplus \cdots  \oplus \sin \tau_{i_{j+1}} \cdots \sin \tau_{i_{N-1}} E^{(i_N)} \oplus  \sin \tau_{i_{j+1}} \cdots \sin \tau_{i_N} F^{(i_n)}] \\   
=&\sum_{l=j+1} ^N \dim E^{(i_l)} + \dim F^{(i_N)},
\end{align*}
which implies 
\bqn
c^{(i_j)} \geq \sum_{l=j+1} ^N \dim E^{(i_l)} + \dim F^{(i_N)} +1.
\eqn
Here we used the same arguments as in the proof of Equation \eqref{eq:kappa}. 
On the other hand, one has
\begin{align*}
d^{(i_j)}&=\dim \g_{p^{(i_j)}}^\perp =\dim \g_{p^{(i_j)}}^\perp \cdot p^{(i_j)} =\dim \g_{p^{(i_j)}}^\perp  \cdot x^{(i_j\dots i_N)} = \dim E^{(i_j)},
\end{align*}
since $x^{(i_j\dots i_N)}$ lies in a slice around $G_{p^{(i_{j-1})}}\cdot p^{(i_j)}$. The assertion now follows with \eqref{eq:kappa}.
\end{proof}
As a consequence of the lemma, we obtain for $I_{i_1\dots i_N}^2(\mu)$ the estimate
\begin{align}
\label{eq:I2}
\begin{split}
I_{i_1\dots i_N}^2(\mu)&\leq C \int_{(-\epsilon,\epsilon)^N}  \prod_{j=1}^N |\tau_{i_j}|^{c^{(i_j)} + \sum _{r=1}^j d^{(i_r)} -1} \d \tau_{i_N} \dots \d \tau_{i_1} \\
&\leq C \int_{(-\epsilon,\epsilon)^N}  \prod_{j=1}^N |\tau_{i_j}|^{\kappa} \d \tau_{i_N} \dots \d \tau_{i_1} =\frac{2C}{\kappa+1} \epsilon^{N (\kappa+1)}
\end{split}
\end{align}
for some $C>0$. Let us now turn to the integral $I_{i_1\dots i_N}^1(\mu)$. After performing the change of variables $\delta_{i_1\dots i_N}$
one obtains
\begin{align*}
I_{i_1\dots i_N}^1(\mu)&  = \int\limits _{\epsilon < |\tau_{i_j} (\sigma)|< T} \hspace{-.5cm} J_{\sigma_{i_1}, \dots, \sigma_{i_N}}\Big ( \frac \mu{\tau_{i_1}(\sigma)\cdots \tau_{i_N}(\sigma)} \Big )  \prod_{j=1}^N |\tau_{i_j}(\sigma)|^{c^{(i_j)} + \sum _{r=1}^j d^{(i_r)} -1} \, |\det D\delta_{i_1\dots i_N}(\sigma) | \d \sigma,
\end{align*}
where 
\bqn
 J_{\sigma_{i_1}, \dots, \sigma_{i_N}}(\nu)= \int  e^{i \, ^{(i_1\dots i_N)} \tilde \psi ^{wk}_\sigma /\nu}  \, a_{i_1\dots i_N} \,    \Phi_{i_1\dots i_N}  \d \xi   dA^{(i_1)} \dots  \d A^{(i_N)}  \d B^{(i_N)} \d \tilde v^{(i_N)} \d p^{(i_{N})} \dots   \d p^{(i_{1})}.
\eqn
Here we denoted by $ ^{(i_1\dots i_N)} \tilde \psi ^{wk}_\sigma $ the weak transform of the phase function $\psi$ as a function of the variables $ p^{(i_j)},  \tilde v^{(i_N)}, \alpha^{(i_j)}, \beta^{(i_N)}$ alone, while the variables $\sigma=(\sigma_{i_1},\dots \sigma_{i_N})$ are regarded as parameters. The idea is now to make use of the principle of the stationary phase to give an asymptotic expansion of $J_{\sigma_{i_1}, \dots, \sigma_{i_N}}(\nu)$. 

\begin{theorem}[Generalized stationary phase theorem for manifolds]
\label{thm:SP}
Let $M$ be a  $n$-dimensional Riemannian manifold,  $\psi \in \Cinft(M)$ be a real valued phase function,  $\mu >0$, and set
\bqn
I({\mu})=\int_M e^{i\psi(m)/\mu} a(m) \, dm,
\eqn
where $a(m)dm$ denotes  a compactly supported $\Cinft$-density on $M$. Let
$$\mathcal C=\mklm{m \in M: \psi_\ast:TM_m \rightarrow T\R_{\psi(m)} \text{ is zero}}$$
 be the  critical set of the phase function $\psi$, and assume that
\begin{enumerate}
\item $\mathcal{C}$ is a smooth submanifold of $M$  of dimension $p$ in a neighborhood of the support of $a$;
\item for all $m \in \mathcal{C}$, the restriction $\psi''(m)_{|N_m\mathcal{C}}$ of the Hessian of $\psi$ at the point $m$  to the normal space $N_m\mathcal{C}$ is a non-degenerate quadratic form.
\end{enumerate}
\noindent
Then, for all $N \in \N$, there exists a constant $C_{N,\psi}>0$ such that
\bqn
|I(\mu) - e^{i\psi_0/\mu}(2\pi \mu)^{\frac {n-p}{2}}\sum_{j=0} ^{N-1} \mu^j Q_j (\psi;a)| \leq C_{N,\psi} \mu^N \vol (\supp a \cap \mathcal{C}) \sup _{l\leq 2N} \norm{D^l a }_{\infty,M},
\eqn
where $D^l$ is a differential operator on $M$ of order $l$, and $\psi_0$ is the constant value of $\psi$ on $\mathcal{C}$. Furthermore, for each $j$ there exists a constant $\tilde C_{j,\psi}>0$ such that 
\bqn
|Q_j(\psi;a)|\leq \tilde C_{j,\psi}  \vol (\supp a \cap \mathcal{C}) \sup _{l\leq 2j} \norm{D^l a }_{\infty,\mathcal{C}},
\eqn
and, in particular,
\bqn
Q_0(\psi;a)= \int _{\mathcal{C}} \frac {a(m)}{|\det \psi''(m)_{|N_m\mathcal{C}}|^{1/2}} d\sigma_{\mathcal{C}}(m) e^{ i \pi\sigma_{\psi''}},
\eqn
where $\sigma_{\psi''}$ is the constant value of the signature of $\psi''(m)_{|N_m\mathcal{C}}$ for $m$ in $\mathcal{C}$.
\end{theorem}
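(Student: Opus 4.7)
The proof proceeds by the standard combination of non-stationary phase away from $\mathcal{C}$, a Morse normal form for $\psi$ in a tubular neighborhood, and the Euclidean stationary phase formula for the resulting Gaussian integral on each normal fiber.

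First I would introduce a smooth partition of unity $\chi_0+\chi_1\equiv 1$ on a neighborhood of $\supp a$ such that $\chi_1$ is supported in a small tubular neighborhood $\mathcal{U}$ of $\mathcal{C}\cap \supp a$, while $\chi_0$ vanishes near $\mathcal{C}\cap \supp a$. Writing $I(\mu)=I_0(\mu)+I_1(\mu)$ accordingly, the first term is handled by non-stationary phase: on $\supp (\chi_0 a)$ one has $\psi_\ast\not=0$, and repeated integration by parts against the first-order operator
\bqn
L=\frac{\mu}{i\norm{\grad \psi}^2}\sum_j (\gd_j\psi)\gd_j, \qquad L(e^{i\psi/\mu})=e^{i\psi/\mu},
\eqn
yields $|I_0(\mu)|\leq C_N\mu^N\sup_{l\leq N}\norm{D^la}_{\infty,M}$ for every $N$, so this contribution is absorbed into the remainder.

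For $I_1(\mu)$, the tubular neighborhood theorem identifies $\mathcal{U}$ with a neighborhood of the zero section of $N\mathcal{C}$ and provides coordinates $(x,y)$ with $x\in \mathcal{C}$ and $y\in N_x\mathcal{C}$, such that $\mathcal{C}=\mklm{y=0}$. Condition (ii) together with $\psi_{|\mathcal{C}}\equiv \psi_0$ gives $\psi(x,y)=\psi_0+\oh\eklm{Q(x)y,y}+R(x,y)$ with $Q(x)=\psi''(x)_{|N_x\mathcal{C}}$ non-degenerate and $R(x,y)=O(|y|^3)$. The Morse lemma with parameters, proved by the homotopy trick applied to the family $\psi_t=\psi_0+\oh\eklm{Qz,z}+tR$, then yields a smooth family of local diffeomorphisms $y=\phi_x(z)$ with $\phi_x(0)=0$, $D\phi_x(0)=\id$, such that $\psi(x,\phi_x(z))=\psi_0+\oh\eklm{Q(x)z,z}$. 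Under this change of variables the density $a\,dm$ pulls back to $b(x,z)\,dz\,d\sigma_{\mathcal{C}}(x)$ with $b(x,0)=a(x)$ once the Jacobians are absorbed into the induced Riemannian density on $\mathcal{C}$. We then obtain
\bqn
I_1(\mu)=e^{i\psi_0/\mu}\int_{\mathcal{C}}\left[\int_{N_x\mathcal{C}}e^{\frac{i}{2\mu}\eklm{Q(x)z,z}}b(x,z)\d z\right]d\sigma_{\mathcal{C}}(x),
\eqn
and the inner integral is a Euclidean oscillatory integral of dimension $n-p$ with non-degenerate quadratic phase. Writing $b(x,\cdot)$ via Fourier inversion, applying Plancherel with the Fourier transform of the renormalized Gaussian $e^{i\eklm{Qz,z}/2}$ (from which the signature factor $e^{i\pi\sigma_{\psi''}/4}$ and $|\det Q(x)|^{-1/2}$ arise), and Taylor-expanding the resulting integrand at the origin produces the asymptotic expansion
\bqn
(2\pi\mu)^{(n-p)/2}e^{i\pi\sigma_{\psi''}/4}|\det Q(x)|^{-1/2}\sum_{j=0}^{N-1}\mu^j L_jb(x,\cdot)(0)+\mu^N r_N(x,\mu),
\eqn
where $L_j$ is a constant-coefficient differential operator of order $2j$ in $z$ built from $Q(x)^{-1}$, and $|r_N(x,\mu)|\leq C\sup_{l\leq 2N}\norm{\gd_z^lb(x,\cdot)}_\infty$ uniformly on $\mathcal{C}\cap \supp b$. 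Integration over $\mathcal{C}$ and identification of the $j=0$ term using $b(x,0)=a(x)$ yields the explicit form of $Q_0(\psi;a)$ and the bounds on the $Q_j(\psi;a)$, while the factor $\vol(\supp a\cap \mathcal{C})$ in the remainder arises from the outer integration.

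The main technical obstacle is the Morse lemma with parameters: one must produce $\phi_x$ depending smoothly on $x\in \mathcal{C}$ with derivative estimates uniform on $\supp a\cap \mathcal{C}$, since the constants $C_{N,\psi}$ and $\tilde C_{j,\psi}$ depend on $\norm{Q(x)^{-1}}$ and on finitely many derivatives of $Q$ and $R$ through the time-dependent vector field generating the homotopy. This output must then be combined with careful bookkeeping of how many derivatives of $a$ are consumed by each integration by parts in the Euclidean expansion, which yields precisely $2N$ for the $N$-th remainder and thereby accounts for the factor $\sup_{l\leq 2N}\norm{D^la}_{\infty,M}$ in the stated bound.
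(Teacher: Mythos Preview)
Your sketch is a correct outline of the standard proof of the generalized stationary phase theorem with a clean critical set: localize via a partition of unity, dispose of the noncritical region by repeated integration by parts, straighten the phase in a tubular neighborhood of $\mathcal{C}$ by the Morse lemma with parameters, and then apply the classical Euclidean stationary phase expansion fiberwise in the normal direction. The derivative bookkeeping you describe (order $2j$ operators for $Q_j$, order $2N$ for the remainder, the factor $\vol(\supp a\cap\mathcal{C})$ from the outer integration, and the dependence of the constants on $\norm{Q(x)^{-1}}$) is accurate.

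The paper, however, does not prove this theorem at all: it simply cites H\"{o}rmander, Theorem~7.7.5, together with Combescure--Ralston--Robert, Theorem~3.3, and moves on. So there is no ``paper's own proof'' to compare against; your argument is precisely the content that those references supply. If anything, you have written out more than the paper does. One cosmetic point: the exponent in the signature factor is conventionally $e^{i\pi\sigma_{\psi''}/4}$, as you correctly wrote in your expansion, whereas the statement in the paper records $e^{i\pi\sigma_{\psi''}}$; this appears to be a typographical slip in the paper rather than an error on your side.
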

\begin{proof}
See for instance H\"{o}rmander, \cite{hoermanderI}, Theorem 7.7.5, together with Combescure-Ralston-Robert \cite{combescure-ralston-robert}, Theorem 3.3.
\end{proof}
\begin{remark}
\label{rmk:A}
An examination of the proof of the foregoing theorem shows that the constants $C_{N,\psi}$ are essentially bounded from above by 
\bqn 
\sup_{m \in \mathcal{C} \cap \supp a} \norm {\Big ( \psi''(m)_{|N_m\mathcal{C}}\Big ) ^{-1}}.
\eqn
Indeed, let $\alpha:(x,y) \rightarrow m \in \mathcal{O}\subset M$ be local normal coordinates such that $\alpha(x,y) \in \mathcal{C}$ if, and only if, $y=0$. By \eqref{eq:Hess}, the transversal Hessian $\mathrm{Hess} \, \psi(m)_{|N_m\mathcal{C}}$ is given in these coordinates by the matrix
\bqn 
\Big (\gd _{y_k} \gd _{y_l} (\psi \circ \alpha)(x,0) \Big )_{k,l} \
\eqn
where $m =\alpha (x,0)$. If now the transversal Hessian of $\psi$ is non-degenerate at the point $m=\alpha(x,0)$, then $y=0$ is a non-degenerate critical point of the function  $y \mapsto (\psi \circ \alpha)(x,y)$, and therefore an isolated critical point by the lemma of Morse. As a consequence,  
\bq
\label{eq:est}
\frac{|y|}{|\gd_y (\psi \circ \alpha)(x,y)|} \leq 2 \norm {\Big (\gd_{y_k} \gd _{y_l} (\psi\circ \alpha)(x,0)\Big ) _{k,l}^{-1}}
\eq
for $y$ close to zero. The assertion now follows by applying H\"{o}rmander \cite{hoermanderI}, Theorem 7.7.5,  to the integral
\bqn 
\int_{\alpha^{-1}(\mathcal{O})} e^{i (\psi \circ \alpha) (x,y)/\mu} (a \circ \alpha)(x,y) \d y \d x
\eqn
in the variable $y$, and with $x$ as a parameter, since in our situation the constant $C$ occuring in H\"{o}rmander \cite{hoermanderI}, Equation (7.7.12), is precisely bounded by \eqref{eq:est}, if we assume as we may that $a$ is supported near $\mathcal{C}$. A similar observation holds with respect to the constants $\tilde C_{j,\psi}$.
\end{remark}
 
Now, as a consequence of the fundamental theorems, and Lemma \ref{lemma:A}, together with the observations preceding Proposition \ref{prop:1},
\begin{itemize}
\item the critical set $\Crit( ^{(i_1\dots i_N)} \tilde \psi ^{wk}_\sigma )$ is a $\Cinft$-submanifold of codimension $2\kappa$ for arbitrary $\sigma$;
\item  the transversal Hessian
\bqn
\mathrm{Hess}  \,^{(i_1\dots i_N)} \tilde \psi ^{wk}_\sigma ( p^{(i_j)},  \tilde v^{(i_N)}, \alpha^{(i_j)}, \beta^{(i_N)})_{|N_{  (p^{(i_j)},  \tilde v^{(i_N)}, \alpha^{(i_j)}, \beta^{(i_N)})}\Crit \big ( ^{(i_1\dots i_N)} \tilde \psi ^{wk}_\sigma \big)}
\eqn 
defines a non-degenerate symmetric bilinear form for arbitrary $\sigma$ at every point  of the critical set of $^{(i_1\dots i_N)} \tilde \psi ^{wk}_\sigma$.
\end{itemize}
Thus, the necessary conditions for applying the principle of the stationary phase to the integral $J_{\sigma_{i_1},\dots,\sigma_{i_N}}(\nu)$ are fulfilled, and we arrive at  the following
\begin{theorem}
\label{thm:J}
Let $\sigma=(\sigma_{i_1},\dots, \sigma_{i_N})$ be a fixed set of parameters. Then, for every $\tilde N \in \N$ there exists a constant $C_{\tilde N, ^{(i_1\dots i_N)} \tilde \psi ^{wk}_\sigma}>0$ such that 
\bqn
|J_{\sigma_{i_1},\dots ,\sigma_{i_N}} (\nu) -(2\pi |\nu|)^\kappa\sum_{j=0} ^{\tilde N-1} |\nu|^j Q_j (^{(i_1\dots i_N)} \tilde \psi ^{wk}_\sigma;a_{i_1\dots i_N} \Phi_{i_1\dots i_N})| \leq C_{\tilde N,^{(i_1\dots i_N)} \tilde \psi ^{wk}_\sigma} |\nu|^{\tilde N},
\eqn
with estimates for the coefficients $Q_j$, and an explicit expression for $Q_0$.
\end{theorem}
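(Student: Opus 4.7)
The strategy is to apply the generalized stationary phase theorem (Theorem \ref{thm:SP}) pointwise in the parameter $\sigma = (\sigma_{i_1},\dots,\sigma_{i_N})$ to the integral $J_{\sigma_{i_1},\dots,\sigma_{i_N}}(\nu)$, regarded as an oscillatory integral on the ambient product manifold
\begin{equation*}
\mathcal M_\sigma = \prod_{j=1}^N \bigl[(S_{i_1\dots i_{j-1}})_{p^{(i_{j-1})},i_j}(H_{i_j})\bigr] \,\times\, (S_{i_1\dots i_N}^+)_{p^{(i_N)}} \,\times\, \g_{p^{(i_N)}} \,\times\, \g_{p^{(i_N)}}^\perp \,\times\cdots\times\, \g_{p^{(i_1)}}^\perp \,\times\, \rn,
\end{equation*}
with phase function $^{(i_1\dots i_N)}\tilde\psi^{wk}_\sigma$ and compactly supported amplitude $a_{i_1\dots i_N}\Phi_{i_1\dots i_N}$. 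The compact support is inherited from the iterative construction of $a_{i_1\dots i_N}$ (which involves the cut-offs $\chi_k$ and $\chi_{i_1\dots i_N}$), and the total integral in \eqref{eq:N} splits precisely so that for fixed $\sigma$ the $\sigma$-independent factor is $J_{\sigma_{i_1},\dots,\sigma_{i_N}}(\nu)$.

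The two hypotheses of Theorem \ref{thm:SP} are furnished directly by the fundamental theorems proved in the previous sections. By the first fundamental theorem, the critical set $\Crit(^{(i_1\dots i_N)}\tilde\psi^{wk}_\sigma)$ is a $\Cinft$-submanifold of codimension $2\kappa$, so that in the notation of Theorem \ref{thm:SP} one has $n-p = 2\kappa$ and consequently $(n-p)/2 = \kappa$. By the second fundamental theorem together with Lemma \ref{lemma:A}, applied with $(c') = \sigma$ kept constant, the restriction of $\mathrm{Hess}\,{}^{(i_1\dots i_N)}\tilde\psi^{wk}_\sigma$ to the normal space of this critical submanifold is a non-degenerate symmetric bilinear form at each of its points. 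A glance at the expression for $^{(i_1\dots i_N)}\tilde\psi^{wk}_\sigma$ together with the description of $\Crit(^{(i_1\dots i_N)}\tilde\psi^{wk}_\sigma)$ given in Equation \eqref{eq:C} shows that the phase vanishes identically on its critical set (the bilinear terms $\eklm{A^{(i_j)}p^{(i_j)},\xi}$ and $\eklm{B^{(i_N)}\tilde v^{(i_N)},\xi}$ vanish by conditions (I), (II), (III), and the higher order remainders $O(|\tau_{i_j}A^{(i_j)}|)$ and $O(|\tau_{i_N}B^{(i_N)}\tilde v^{(i_N)}|)$ vanish on the same locus), so that the constant $\psi_0$ in Theorem \ref{thm:SP} equals zero and the prefactor $e^{i\psi_0/\nu}$ is trivial.

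Invoking Theorem \ref{thm:SP} with the parameter $\mu$ replaced by $|\nu|$ (a short argument, treating the sign of $\nu$ by complex conjugation, reduces the case $\nu<0$ to $\nu>0$) then yields the asserted asymptotic expansion
\begin{equation*}
\Bigl| J_{\sigma_{i_1},\dots,\sigma_{i_N}}(\nu) - (2\pi|\nu|)^\kappa \sum_{j=0}^{\tilde N -1} |\nu|^j Q_j\bigl(^{(i_1\dots i_N)}\tilde\psi^{wk}_\sigma;\, a_{i_1\dots i_N}\Phi_{i_1\dots i_N}\bigr) \Bigr| \leq C_{\tilde N, {}^{(i_1\dots i_N)}\tilde\psi^{wk}_\sigma}\, |\nu|^{\tilde N},
\end{equation*}
with the claimed estimates $|Q_j| \leq \tilde C_{j,\psi}\,\vol(\supp a\cap\Crit)\,\sup_{l\leq 2j}\|D^l(a_{i_1\dots i_N}\Phi_{i_1\dots i_N})\|_\infty$ and the explicit form
\begin{equation*}
Q_0 = \int_{\Crit(^{(i_1\dots i_N)}\tilde\psi^{wk}_\sigma)} \frac{a_{i_1\dots i_N}\Phi_{i_1\dots i_N}}{|\det\,{}^{(i_1\dots i_N)}\tilde\psi^{wk\,\prime\prime}_{\sigma|N\Crit}|^{1/2}}\,e^{i\pi\sigma_{\psi''}/4}\, d\Crit.
\end{equation*}

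The main technical point worth flagging is not an obstacle at the level of the present pointwise statement, but rather a warning about the $\sigma$-dependence of the constant $C_{\tilde N,\,{}^{(i_1\dots i_N)}\tilde\psi^{wk}_\sigma}$: by Remark \ref{rmk:A} this constant is controlled by the operator norm of the inverse of the transversal Hessian, which could in principle blow up as $\sigma$ approaches the boundary of the parameter region. Controlling this dependence will be the essential task when one subsequently integrates over $\sigma$ to handle $I^1_{i_1\dots i_N}(\mu)$; for the statement of the present theorem, however, $\sigma$ is fixed, and the direct application of Theorem \ref{thm:SP} suffices.
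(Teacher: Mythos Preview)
Your proposal is correct and follows essentially the same route as the paper: the paper's own argument consists precisely of the two bullet points preceding the theorem (the fundamental theorems together with Lemma~\ref{lemma:A} and the fiber-description of $\Crit(^{(i_1\dots i_N)}\tilde\psi^{wk}_\sigma)$ preceding Proposition~\ref{prop:1} verify the hypotheses of Theorem~\ref{thm:SP}), after which the theorem is simply recorded with a \qed. Your additional remarks on $\psi_0=0$, the sign of $\nu$, and the $\sigma$-dependence of the constant are accurate elaborations of points the paper either leaves implicit or addresses immediately after the theorem.
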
\qed

Before going on, let us remark that for the computation of the integrals $I_{i_1\dots i_N}^1(\mu)$ it is only necessary to have an asymptotic expansion for the integrals $J_{\sigma_{i_1},\dots,\sigma_{i_N}} (\nu)$ in the case that $\sigma_{i_1} \cdots \sigma_{i_N}\not=0$, which can also be  obtained without the fundamental theorems using only the factorization of the phase function $\psi$ given by the resolution process. Nevertheless, the main consequence to be drawn from the fundamental theorems is that the constants $C_{\tilde N, ^{(i_1\dots i_N)} \tilde \psi ^{wk}_\sigma}$ and the coefficients $Q_j$ in Theorem \ref{thm:J} have uniform bounds in $\sigma$. Indeed, by Remark \ref{rmk:A} we have
\bqn 
C_{\tilde N, ^{(i_1\dots i_N)} \tilde \psi ^{wk}_\sigma} \leq C'_{\tilde N} \sup_{ p^{(i_j)},  \tilde v^{(i_N)}, \alpha^{(i_j)}, \beta^{(i_N)}} \norm{\Big ({\mathrm{Hess} \,  ^{(i_1\dots i_N)} \tilde \psi ^{wk}_\sigma}_{|N \Crit ( ^{(i_1\dots i_N)} \tilde \psi ^{wk}_\sigma)}\Big ) ^{-1}}.
\eqn
But since by Lemma \ref{lemma:A} the transversal Hessian 
\bqn 
{\mathrm{Hess} \,  ^{(i_1\dots i_N)} \tilde \psi ^{wk}_\sigma}_{|N_{ (p^{(i_j)},  \tilde v^{(i_N)}, \alpha^{(i_j)}, \beta^{(i_N)} ) } \Crit ( ^{(i_1\dots i_N)} \tilde \psi ^{wk}_\sigma)}
\eqn 
is given by 
\bqn 
{\mathrm{Hess} \,  ^{(i_1\dots i_N)} \tilde \psi ^{wk}}_{|N_{ (\sigma_{i_j}, p^{(i_j)},  \tilde v^{(i_N)}, \alpha^{(i_j)}, \beta^{(i_N)} ) } \Crit ( ^{(i_1\dots i_N)} \tilde \psi ^{wk})},
\eqn 
we finally obtain the estimate
\bqn 
C_{\tilde N, ^{(i_1\dots i_N)} \tilde \psi ^{wk}_\sigma} \leq C'_{\tilde N} \sup_{\sigma_{i_j},  p^{(i_j)},  \tilde v^{(i_N)}, \alpha^{(i_j)}, \beta^{(i_N)}} \norm{\Big ({\mathrm{Hess} \,  ^{(i_1\dots i_N)} \tilde \psi ^{wk}}_{|N \Crit ( ^{(i_1\dots i_N)} \tilde \psi ^{wk})}\Big ) ^{-1}} \leq C_{\tilde N, {(i_1\dots i_N)}}
\eqn
by a constant independent of $\sigma$. Similarly, one can show the existence of bounds of the form
\bqn 
|Q_j(^{(i_1\dots i_N)} \tilde \psi ^{wk}_\sigma; a_{i_1\dots i_N} \Phi_{i_1\dots i_N}) |\leq \tilde C_{j, {(i_1\dots i_N)}},
\eqn
with constants $\tilde C_{j, {(i_1\dots i_N)}}$ independent of $\sigma$. As a consequence of Theorem \ref{thm:J}, we obtain for arbitrary $\tilde N\in \N$
\begin{align*}
|J_{\sigma_{i_1},\dots, \sigma_{i_N}} (\nu) & -(2\pi |\nu|)^\kappa Q_0( ^{(i_1\dots i_N)} \tilde \psi ^{wk}_\sigma;a_{i_1\dots i_N} \Phi_{i_1\dots i_N})| \\&\leq 
\Big |J_{\sigma_{i_1},\dots ,\sigma_{i_N}} (\nu) -(2\pi |\nu|)^\kappa\sum_{l=0} ^{\tilde N-1} |\nu|^l Q_l (^{(i_1\dots i_N)} \tilde \psi ^{wk}_\sigma;a_{i_1\dots i_N} \Phi_{i_1\dots i_N})\Big | \\ & + (2\pi |\nu|)^\kappa \sum_{l=1} ^{\tilde N-1} |\nu|^l |Q_l (^{(i_1\dots i_N)} \tilde \psi ^{wk}_\sigma;a_{i_1\dots i_N} \Phi_{i_1\dots i_N})| \leq c_1 |\nu|^{\tilde N}+c_2 |\nu|^\kappa \sum_{l=1}^{\tilde N-1} |\nu|^l
\end{align*}
with constants $c_i>0$ independent  of both $\sigma$ and $\nu$.
From this we deduce
\begin{align*}
\Big |I_{i_1\dots i_N}^1&(\mu)  - (2\pi \mu)^{\kappa}  \int _{\epsilon < |\tau_{i_j}(\sigma) |< T}   Q_0 \prod_{j=1}^N |\tau_{i_j}(\sigma)|^{c^{(i_j)} + \sum _{r=1}^j d^{(i_r)} -1-\kappa}  |\det D\delta_{i_1\dots i_N}(\sigma) | \d \sigma \Big| \\&\leq c_3 \mu^{\tilde N}   \int_{\epsilon < |\tau_{i_j}(\sigma) |< T}   \prod_{j=1}^N |\tau_{i_j}(\sigma)|^{c^{(i_j)} + \sum _{r=1}^j d^{(i_r)} -1-\tilde N} \, |\det D\delta_{i_1\dots i_N}(\sigma) | \d \sigma\\
&+ c_4 \mu^{\kappa} \sum_{l=1}^{\tilde N-1} \mu ^l  \int_{\epsilon < |\tau_{i_j}(\sigma) |< T}   \prod_{j=1}^N |\tau_{i_j}(\sigma)|^{c^{(i_j)} + \sum _{r=1}^j d^{(i_r)} -1-\kappa -l} \, |\det D\delta_{i_1\dots i_N}(\sigma) | \d \sigma\\
& \leq c_5 \mu^{\tilde N} \max \Big \{1, \prod _{j=1}^N  \epsilon^{c^{(i_j)} + \sum _{r=1}^j d^{(i_r)} -\tilde N} \Big \} +c_6 \sum_{l=1}^{\tilde N -1} \mu^{\kappa +l} \max \Big \{1, \prod _{j=1}^N  \epsilon^{c^{(i_j)} + \sum _{r=1}^j d^{(i_r)} -\kappa -l} \Big \}.
\end{align*}
We now set
\bqn 
\epsilon=\mu^{1/N}.
\eqn
Taking into account Lemma \ref{lemma:kappa}, one infers  that the right hand side of the last inequality can be estimated by 
\bqn 
c_5 \max \mklm{ \mu^{\tilde N}, \mu^{\kappa +1}} + c_6 \sum _{l=1}^{\tilde N-1} \max \mklm{ \mu^{\kappa +l}, \mu^{\kappa +1}},
\eqn
so that for sufficiently large $\tilde N \in \N$ we finally obtain an asymptotic expansion for $I_{i_1\dots i_N}(\mu)$ by taking into account \eqref{eq:I2}, and the fact that 
\bqn 
(2\pi \mu)^\kappa \int_{0 < |\tau_{i_j} |< \mu^{1/N}}  Q_0  \prod_{j=1}^N |\tau_{i_j}|^{c^{(i_j)} + \sum _{r=1}^j d^{(i_r)} -1} \d \tau_{i_N} \dots \d \tau_{i_1} = O(\mu^{\kappa+1}). 
\eqn
\begin{theorem}
Let the assumptions of the first fundamental theorem be fulfilled. Then 
\bqn 
I_{i_1\dots i_N}(\mu)=(2 \pi \mu)^\kappa L_{i_1\dots i_N}+ O(\mu^{\kappa+1}),
\eqn
where the leading coefficient $L_{i_1\dots i_N}$ is given by 
\bq
\label{eq:L}
L_{i_1\dots i_N}=\int_{\Crit( ^{(i_1\dots i_N)} \tilde \psi^{wk})} \frac { a_{i_1\dots i_N} \Phi_{i_1\dots i_N} \, d\Crit( ^{(i_1\dots i_N)} \tilde \psi^{wk})} {|\mathrm{Hess} ( ^{(i_1\dots i_N)} \tilde \psi^{wk})_{N\Crit( ^{(i_1\dots i_N)} \tilde \psi^{wk})}|^{1/2}}.
\eq
\end{theorem}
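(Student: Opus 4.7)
The plan is to assemble the estimates developed in the paragraphs preceding the statement into a single clean asymptotic. First, I would split $I_{i_1\dots i_N}(\mu)=I^1_{i_1\dots i_N}(\mu)+I^2_{i_1\dots i_N}(\mu)$ via a radial cut-off at $\epsilon$ in each $\tau_{i_j}$. The crude bound \eqref{eq:I2}, which rests on Lemma \ref{lemma:kappa}, gives $I^2_{i_1\dots i_N}(\mu)=O(\epsilon^{N(\kappa+1)})$; choosing $\epsilon=\mu^{1/N}$ at the end will render this contribution $O(\mu^{\kappa+1})$.

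For $I^1_{i_1\dots i_N}(\mu)$ I would pass to the variables $\sigma=\delta_{i_1\dots i_N}^{-1}(\tau)$ and apply the generalized stationary phase theorem (Theorem \ref{thm:SP}) to the inner integral $J_{\sigma_{i_1},\dots,\sigma_{i_N}}(\nu)$ with $\nu=\mu/(\tau_{i_1}(\sigma)\cdots\tau_{i_N}(\sigma))$. The first fundamental theorem provides the smooth critical submanifold of codimension $2\kappa$, while the second fundamental theorem, combined with Lemma \ref{lemma:A} and Remark \ref{rmk:A}, guarantees that the norm of the inverse of the transversal Hessian is bounded uniformly in $\sigma$, so that both the remainder constants $C_{\tilde N,\cdot}$ and the coefficient bounds $|Q_j(^{(i_1\dots i_N)}\tilde\psi^{wk}_\sigma;\,a_{i_1\dots i_N}\Phi_{i_1\dots i_N})|$ are uniform in $\sigma$. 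Inserting Theorem \ref{thm:J} into the outer $\sigma$-integral and using Lemma \ref{lemma:kappa} once more to control the radial factor $\prod|\tau_{i_j}(\sigma)|^{c^{(i_j)}+\sum_{r=1}^{j}d^{(i_r)}-1-\kappa-l}$ yields, as displayed just above the statement, a total error of the form
\begin{equation*}
c_5\max\{\mu^{\tilde N},\mu^{\kappa+1}\}+c_6\sum_{l=1}^{\tilde N-1}\max\{\mu^{\kappa+l},\mu^{\kappa+1}\},
\end{equation*}
which is $O(\mu^{\kappa+1})$ once $\tilde N$ is chosen sufficiently large.

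To identify the leading coefficient, I would complete the $\sigma$-integration of the $Q_0$-term back to the full region: on the complementary set $\{|\tau_{i_j}(\sigma)|<\mu^{1/N}\}$ the uniform bound on $Q_0$ together with Lemma \ref{lemma:kappa} gives a contribution of order $(2\pi\mu)^\kappa\cdot\mu^{N(\kappa+1)/N}=O(\mu^{\kappa+1})$, so adding and subtracting this piece is harmless. Unwinding the change of variables $\delta_{i_1\dots i_N}$ and the explicit form of $Q_0$ from Theorem \ref{thm:SP}, the leading coefficient collapses to the single surface integral \eqref{eq:L} over $\Crit(^{(i_1\dots i_N)}\tilde\psi^{wk})$ against the density $a_{i_1\dots i_N}\Phi_{i_1\dots i_N}/|\mathrm{Hess}(^{(i_1\dots i_N)}\tilde\psi^{wk})_{|N\Crit}|^{1/2}$.

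The main obstacle is the $\sigma$-uniformity. As $\sigma_{i_1}\cdots\sigma_{i_N}\to 0$, the fibres of the critical manifold described in the first fundamental theorem degenerate in shape, and the transversal Hessian, though non-degenerate by Proposition \ref{prop:1}, could in principle see its smallest eigenvalue collapse. The crucial point is therefore that Proposition \ref{prop:1} together with Lemma \ref{lemma:A} identifies the transversal Hessian of $^{(i_1\dots i_N)}\tilde\psi^{wk}_\sigma$ with that of $^{(i_1\dots i_N)}\tilde\psi^{wk}$ in a way that is continuous in $\sigma$ up to and including $\sigma=0$; compactness of the support of $a_{i_1\dots i_N}$ then upgrades this to a uniform lower bound on the operator norm of the inverse, which is exactly what Remark \ref{rmk:A} requires in order to make the stationary phase remainders uniform and legitimately integrable against the radial Jacobian.
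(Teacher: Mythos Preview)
Your proposal is essentially the same argument the paper gives: the split $I^{1}+I^{2}$ at scale $\epsilon=\mu^{1/N}$, the uniform-in-$\sigma$ stationary phase expansion for $J_{\sigma}(\nu)$ via Theorem~\ref{thm:J} with constants controlled through Lemma~\ref{lemma:A} and Remark~\ref{rmk:A}, the use of Lemma~\ref{lemma:kappa} to absorb the radial Jacobians, and the completion of the $Q_0$-integral over the small region. Your identification of the $\sigma$-uniformity of the transversal Hessian as the crux is exactly the point the paper stresses. One minor slip: the small-region $Q_0$ contribution is $(2\pi\mu)^\kappa\cdot O(\mu)$, not $(2\pi\mu)^\kappa\cdot\mu^{\kappa+1}$, since after extracting $|\nu|^\kappa$ the residual radial exponent is $c^{(i_j)}+\sum_r d^{(i_r)}-1-\kappa\ge 0$ rather than $\ge\kappa$; the conclusion $O(\mu^{\kappa+1})$ is unaffected.
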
\qed

\section{Statement of the main result}
\label{sec:8}

Let us now return to our departing point, that is, the asymptotic behavior of the integral $I(\mu)$ introduced in \eqref{int}. For this, we still have to examine the contributions to $I(\mu)$ coming from integrals of the form
\begin{align}
\label{eq:P}
\begin{split}
\tilde I_{i_1\dots i_\Theta}(\mu)&=\int_{M_{i_1}(H_{i_1})\times (-T,T)} \Big [ \int_{(S_{i_1})_{p^{(i_1)},i_2}(H_{i_2})\times (-T,T)} \dots \Big [  \int_{(S_{i_1\dots i_{\Theta-1}})_{p^{(i_{\Theta-1})},i_{\Theta}}(H_{i_{\Theta}})\times (-T,T)} \\
&\Big [ \int_{(S_{i_1\dots i_{\Theta}}^+)_{p^{(i_\Theta)}}\times \g_{p^{(i_{\Theta})}}\times \g_{p^{(i_{\Theta})}}^\perp \times \cdots \times \g_{p^{(i_{1})}}^\perp\times \rn} e^{i\frac {\tau_1 \dots \tau_\Theta}\mu \, ^{(i_1\dots i_\Theta)} \tilde \psi ^{wk}}  \, a_{i_1\dots i_\Theta} \,   \tilde \Phi_{i_1\dots i_\Theta} \\
& \d \xi  \d A^{(i_1)} \dots  \d A^{(i_\Theta)}  \d B^{(i_\Theta)} \d \tilde v^{(i_\Theta)} \Big ]  \d \tau_{i_\Theta} \d p^{(i_{\Theta})} \dots  \Big ] \d \tau_{i_2} \d p^{(i_{2})} \Big ]\d \tau_{i_1} \d p^{(i_{1})}.
\end{split}
\end{align}
where 
\bqn 
a_{i_1\dots i_\Theta}=[ a \, \chi_{i_1}  \circ (\id_\xi \otimes \zeta_{i_1} \circ \zeta_{i_1i_2} \circ \dots \circ \zeta_{i_1\dots i_\Theta})] \, [ \chi_{i_1i_2} \circ \zeta_{i_1i_2} \circ \dots\circ \zeta_{i_1\dots i_\Theta}  ] \dots [\chi_{i_1\dots i_\Theta} \circ \zeta_{i_1\dots i_\Theta}]
\eqn
is supposed to have compact support in one of the $\alpha^{(i_\Theta)}$-charts, and
\begin{align*}
\tilde \Phi_{i_1\dots i_\Theta} &=\prod_{j=1}^\Theta |\tau_{i_j}|^{c^{(i_j)}+\sum_r^j d^{(i_r)}-1}\Phi_{i_1\dots i_\Theta},
\end{align*}
 $\Phi_{i_1\dots i_\Theta}$ being a smooth function which does not depend on the variables $\tau_{i_j}$. Now, a computation of the $\xi$-derivatives of $\, ^{(i_1\dots i_\Theta)} \tilde \psi ^{wk}$ in any of the $\alpha^{(i_\Theta)}$-charts shows that $\, ^{(i_1\dots i_\Theta)} \tilde \psi ^{wk}$ has no critical points  there. 
 By the non-stationary phase theorem, see H\"{o}rmander \cite{hoermanderI}, Theorem 7.7.1, one then computes for arbitrary $\tilde N \in \N$
 \begin{align*}
| \tilde I_{i_1 \dots i_\Theta}(\mu)| \leq c_7 \mu^{\tilde N} \int_{\epsilon<|\tau_{i_j}| < T} \prod_{j=1} ^\Theta |\tau_{i_j}|^{c^{(i_j)}+\sum_r^j d^{(i_r)}-1-\tilde N}d\tau + c_8 \epsilon^{\Theta (\kappa+1)} \leq c_9 \max\mklm{\mu^{\tilde N},\mu^{\kappa+1}},
 \end{align*}
 where we took $\epsilon=\mu^{1/\Theta}$. Choosing $\tilde N$ large enough, we conclude that
 \bqn
 | \tilde I_{i_1 \dots i_\Theta}(\mu)| =O(\mu^{\kappa+1}).
 \eqn
  As a consequence of this we see that, up to terms of order $O(\mu^{\kappa+1})$,  $I(\mu)$ can be written as a sum
\begin{align*}
I(\mu)&=\sum_{k<L} I_k(\mu)+I_L(\mu) =\sum_{k<l<L} I_{kl}(\mu) + \sum_{k<L} I_{kL}(\mu)  +I_L(\mu)\\
&=\sum _N\sum_{i_1<\dots< i_N<  i_{N+1} =L} I_{i_1\dots i_N}( \mu)+\sum_M \sum_{i_1<\dots<i_M<i_{M+1} \not=L} I_{i_1\dots i_M L}( \mu),
\end{align*}
where the first term in the last line is a sum to be taken over all the indices $i_1,\dots, i_N$ corresponding to all possible isotropy branches of the form $(H_{i_1},\dots,  (H_{i_{N}}), (H_{i_{N+1}})=(H_L))$ of varying length $N$, while the second term is a sum over all indices $i_1, \dots, i_M$ corresponding to branches  $(H_{i_1},\dots, (H_{i_{M}}), (H_{i_{M+1}})\not=(H_L))$ of arbitrary length $M$. The asymptotic behavior of the integrals $I_{i_1\dots i_N}(\mu)$ has been determined in the previous section, and it is not difficult to see that the integrals $I_{i_1\dots i_ML}$ have analogous asymptotic descriptions. We are now ready to state and prove the main result of this paper.
\begin{theorem}
Let $G$ be  a compact, connected Lie group $G$ with Lie algebra $\g$, acting orthogonally on Euclidean space $\rn$, and define
\bqn
I(\mu)=   \int _{T^\ast \rn}  \int_{\g} e^{i \psi( x,\xi,X)/\mu }   a(x,\xi,X)  \d X \d\xi \d x ,   \qquad \mu >0,
\eqn 
where the phase function 
\bqn 
\psi(x,\xi,X)=\mathbb{J}(x,\xi)(X)=\eklm{ X x,\xi}
\eqn
is given by the moment map $\mathbb{J}:T^\ast \rn \rightarrow \g^\ast$ of the underlying Hamiltonian action, and $dxd\xi$, $dX$ are Lebesgue measures in $T^\ast \rn$,  and $\g$, respectively, and $a \in \CT( T^\ast \rn \times \g)$.   Then 
\bqn 
I(\mu) = (2\pi\mu)^\kappa L_0 + O(\mu^{\kappa+1}),  \qquad \mu \to 0^+.
\eqn
Here $\kappa$ is the dimension of an orbit of principal type in $\rn$, and the leading coefficient is given by 
\bq
\label{eq:L0}
L_0=\int_{\mathrm{Reg}\, \mathcal{C}} \frac {a(x,\xi,X)}{|\mathrm{Hess}  \, \psi(x,\xi,X)_{N_{(x,\xi,X)}\mathrm{Reg}\, \mathcal{C}}|^{1/2}} \d(\mathrm{Reg}\, \mathcal{C})(x,\xi,X),
\eq
where $\mathrm{Reg}\, \mathcal{C}$ denotes the regular part of the critical set $\mathcal{C}=\Crit(\psi)$ of $\psi$. 
In particular, the integral over $\mathrm{Reg}\, \mathcal{C}$ exists.
\end{theorem}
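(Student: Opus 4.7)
My plan is to combine the decomposition scheme of Section~\ref{sec:8} with the asymptotic expansion of $I_{i_1\dots i_N}(\mu)$ already derived. First I would use the partition of unity subordinated to the covering \eqref{covrn} to write $I(\mu) = \sum_{k=1}^{L} I_k(\mu)$. The term $I_L(\mu)$ is localized in the principal stratum $\rn(H_L)$, where $\Crit(\psi)$ is already smooth and its transversal Hessian is non-degenerate; hence a direct application of Theorem~\ref{thm:SP} produces a contribution of the form $(2\pi\mu)^\kappa L_L + O(\mu^{\kappa+1})$ with $L_L$ an integral of $a/|\mathrm{Hess}\,\psi_{|N\Reg\,\mathcal{C}}|^{1/2}$ over $\supp\chi_L\cap \Reg\,\mathcal{C}$. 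For each remaining $I_k(\mu)$ with $k<L$, I would iterate the decomposition-monoidal transformation-reduction process along every branch $((H_{i_1}), \dots, (H_{i_{N+1}})=(H_L))$ of the isotropy tree. At every stage the integral splits into contributions of type $I_{i_1\dots i_N}(\mu)$, which continue the iteration inside a $\theta$-chart, and contributions of type $\tilde I_{i_1\dots i_\Theta}(\mu)$ as in \eqref{eq:P}, where the weak transform has no critical points. Since the isotropy tree has finite depth, the iteration terminates.

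Next, the contributions $\tilde I_{i_1\dots i_\Theta}(\mu)$ are bounded by $O(\mu^{\kappa+1})$ using the non-stationary phase theorem together with the cut-off $\epsilon=\mu^{1/\Theta}$, exactly as carried out at the beginning of Section~\ref{sec:8}. The contributions $I_{i_1\dots i_N}(\mu)$ associated with full branches terminating at $(H_L)$ are governed by the previous theorem, giving
\bqn
I_{i_1\dots i_N}(\mu) = (2\pi\mu)^\kappa L_{i_1\dots i_N} + O(\mu^{\kappa+1}),
\eqn
with leading coefficient \eqref{eq:L}. Summation yields
$$I(\mu) = (2\pi\mu)^\kappa \sum L_{i_1\dots i_N} + O(\mu^{\kappa+1}),$$
and it remains to identify $\sum L_{i_1\dots i_N}$ with $L_0$ as in \eqref{eq:L0}.

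This identification is the key step. For each branch, the composite desingularization map $\zeta = (\zeta_{i_1}\circ\zeta_{i_1i_2}\circ\cdots\circ\zeta_{i_1\dots i_N}\circ\delta_{i_1\dots i_N})\otimes \id_\xi$ restricts to a diffeomorphism from $\Crit(\,^{(i_1\dots i_N)}\phw)\cap\{\sigma_{i_1}\cdots\sigma_{i_N}\not=0\}$ onto its image in $\Reg\,\mathcal{C}$. The factorization $\,^{(i_1\dots i_N)}\tilde\psi^{tot} = \tau_{i_1}\cdots\tau_{i_N}\cdot\,^{(i_1\dots i_N)}\phw$, combined with the Hessian computation at the end of the proof of the second fundamental theorem, relates the two transversal Hessians on the critical set by an explicit $(\tau_{i_1}\cdots\tau_{i_N})^2$ factor, and the accumulated Jacobian powers $\prod|\tau_{i_j}|^{c^{(i_j)}+\sum_r d^{(i_r)}-1}$ in $\tilde\Phi_{i_1\dots i_N}$ absorb precisely the Jacobian of $\zeta$ needed to pass from the resolved surface measure to the natural measure on $\Reg\,\mathcal{C}$. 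Summing over branches and over $\theta$-charts, the partition of unity glues the local pieces into a single integral over $\Reg\,\mathcal{C}\cap \supp a$; the locus $\{\sigma_{i_1}\cdots\sigma_{i_N}=0\}$ discarded in each chart maps into $\Sing\,\mathcal{C}$ and carries no mass in the limit. The finiteness of the integral \eqref{eq:L0} then follows a posteriori, since each $L_{i_1\dots i_N}$ is finite by the second fundamental theorem.

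The main obstacle is precisely this bookkeeping: tracking how the transversal Hessians transform under $\zeta$, how the Jacobian weights accumulated through successive monoidal transformations combine with the surface measures on the intermediate resolved critical strata, and verifying that the images of the different charts and branches cover $\Reg\,\mathcal{C}\cap\supp a$ compatibly with the partitions of unity and without overlap modulo measure-zero sets. Once this is established, the two displayed formulas agree, and the theorem follows.
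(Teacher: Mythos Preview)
Your reduction of $I(\mu)$ to a sum of $I_{i_1\dots i_N}(\mu)$ plus $O(\mu^{\kappa+1})$ contributions from the $\alpha$-charts matches the paper exactly, and so does the invocation of the previous theorem to extract the leading coefficients $L_{i_1\dots i_N}$. The genuine divergence is in the identification $\sum L_{i_1\dots i_N}=L_0$.

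You propose a \emph{direct} change-of-variables argument: pull the integrand of $L_{i_1\dots i_N}$ back through $\zeta$, track the scaling of the transversal Hessian and the Jacobian weights, and show the pieces glue to the integral \eqref{eq:L0}. This is workable in principle, but the bookkeeping you flag as the ``main obstacle'' is real: one must compare $\mathrm{Hess}\,\psi_{|N\Reg\,\mathcal{C}}$ with $\mathrm{Hess}\,^{(i_1\dots i_N)}\phw_{|N\Crit(\phw)}$ through \emph{two} transformations (the pullback under $\zeta$, and the passage from $\tilde\psi^{tot}$ to $\tau_{i_1}\cdots\tau_{i_N}\cdot\phw$), simultaneously matching the induced surface measures. (Incidentally, the transversal Hessian of $\tilde\psi^{tot}$ equals $\tau_{i_1}\cdots\tau_{i_N}$ times that of $\phw$ on the critical set, so the $2\kappa\times 2\kappa$ determinant scales by $(\tau_{i_1}\cdots\tau_{i_N})^{2\kappa}$, not by $(\tau_{i_1}\cdots\tau_{i_N})^2$ as you wrote.)

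The paper sidesteps all of this with an \emph{indirect} argument. It introduces smooth cut-offs $u_\epsilon$ supported in a $3\epsilon$-neighborhood of $\Sing\Omega$, and considers $I_\epsilon(\mu)$ with amplitude $a(1-u_\epsilon)$. Since $a(1-u_\epsilon)$ is supported away from the singular locus, the ordinary stationary phase theorem applies directly and gives the leading coefficient as the integral \eqref{eq:L0} with $a$ replaced by $a(1-u_\epsilon)$. On the other hand, the entire resolution machinery applied to $I_\epsilon(\mu)$ gives the leading coefficient as the corresponding sum $\sum L_{i_1\dots i_N}(\epsilon)$. By \emph{uniqueness of the leading term} in the asymptotic expansion, these two expressions coincide for every $\epsilon>0$. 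Letting $\epsilon\to 0$, Fatou's lemma (applied first with a nonnegative amplitude) shows the integral over $\Reg\,\mathcal{C}$ is finite, and then dominated convergence yields \eqref{eq:L0}.

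What each approach buys: your direct method would, if carried out, give an explicit pointwise dictionary between the resolved and unresolved integrands, which is conceptually satisfying and might be reusable elsewhere. The paper's argument is far shorter and completely avoids the Jacobian and Hessian comparison; its cost is that the finiteness of \eqref{eq:L0} and the equality $L_0=\sum L_{i_1\dots i_N}$ are obtained simultaneously and somewhat opaquely, via uniqueness rather than by an explicit identity.
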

\begin{remark}
Note that Equation \eqref{eq:L0} in particular means that the obtained asymptotic expansion for $I(\mu)$ is independent of the explicit partial resolution we used.
\end{remark}
\begin{proof}
By our previous considerations, one has 
\bqn 
I(\mu) = (2\pi\mu)^\kappa L_0 + O(\mu^{\kappa+1}),  \qquad \mu \to 0^+,
\eqn
where $L_0$ is given as a sum of integrals of the form \eqref{eq:L}. It therefore remains to show the equality \eqref{eq:L0}. For this let us introduce first certain cut-off functions for the singular part $\Sing \Omega$ of $\Omega$. Thus, let $K$  be a compact subset in $\R^{2n}$, $\epsilon >0$, and denote by $v_\epsilon$ the characteristic function of the set
 \bqn
 (\Sing \Omega \cap K)_{2\epsilon}=\mklm{z \in \R^{2n}: |z-z'| <2 \epsilon \text{ for some } z' \in \Sing \Omega\cap K}.
 \eqn
 Consider further the unit ball $B_1$ in $\R^{2n}$, together with a function $\iota \in \CT(B_1)$ with $\int \iota dz=1$, and set $\iota_\epsilon(z) =\epsilon^{-2n} \iota(z/\epsilon)$. Clearly  $\int \iota_\epsilon dz =1$, $\supp \iota_\epsilon \subset B_\epsilon$, and we define
 \bq
 \label{eq:u}
 u_\epsilon=v_\epsilon \ast \iota_\epsilon.
 \eq
One can then show that $u_\epsilon \in \CT((\Sing \Omega \cap K)_{3\epsilon})$, and $u_\epsilon =1$ on $(\Sing \Omega\cap K)_\epsilon$, together with
\bqn
|\gd^\alpha_z u _\epsilon|\leq C_\alpha \epsilon^{-|\alpha|},
\eqn
where $C_\alpha$ is a constant which depends only on $\alpha$ and $n$, see H\"{o}rmander \cite{hoermanderI},  Theorem 1.4.1.\\\
Next,  we shall prove
\begin{lemma}\label{lemlim}
Let $a \in \CT(\R^{2n}\times \g)$, and $K$ be such that $\supp a \subset K$. Then the  limit 
\bq
\label{eq:MC}
\lim_{\eps \to 0}  \int_{\Reg \mathcal{C}}\frac{ [a (1-u_\eps)] (x,\xi,X)}{|\det  \, \psi'' (x,\xi,X)_{|N_{(x,\xi,X)}\Reg \mathcal{C}} |^{1/2}} d(\Reg \mathcal{C})(x,\xi,X)
\eq
exists and is equal to $L_0$, where $d(\Reg {\mathcal{C}})$ is the induced Riemannian measure on $\Reg {\mathcal{C}}$.
\end{lemma}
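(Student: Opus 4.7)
The plan is to pull back the integral in \eqref{eq:MC} through the explicit sequence of monoidal transformations constructed in Section 3, and to show that it decomposes into the same sum of integrals over the resolved critical manifolds $\Crit(^{(i_1\dots i_N)}\tilde\psi^{wk})$ that defines $L_0$. To begin, insert the full partition of unity $\chi_{i_1}\cdot \chi_{i_1i_2} \cdots \chi_{i_1\dots i_N}$ inherited from the iterative decomposition into the integral \eqref{eq:MC}; this writes it as a finite sum over all isotropy branches $((H_{i_1}),\dots,(H_{i_{N+1}})=(H_L))$, together with the analogous branches ending before reaching $(H_L)$.

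Second, on each branch restrict attention to the open subset of $\Crit(^{(i_1\dots i_N)}\tilde\psi^{wk})$ where $\sigma_{i_1}\cdots \sigma_{i_N}\ne 0$. On this set, the composition $\zeta=(\zeta_{i_1}\circ\zeta_{i_1i_2}\circ\cdots\circ\zeta_{i_1\dots i_N})\circ\delta_{i_1\dots i_N}\otimes \id_\xi$ is a local diffeomorphism onto its image in $\Reg\mathcal{C}$, by the identification \eqref{eq:G} in the proof of the first fundamental theorem. The pullback of the volume form on $\Reg\mathcal{C}$ along this diffeomorphism produces precisely the Jacobian factor $\tilde\Phi_{i_1\dots i_N}= \prod_{j=1}^N |\tau_{i_j}|^{c^{(i_j)}+\sum_{r\le j}d^{(i_r)}-1}\Phi_{i_1\dots i_N}$, while the pullback of $|\det \psi''_{|N\Reg\mathcal{C}}|^{1/2}$ gives, by the computation at the end of the proof of the second fundamental theorem (together with Lemma \ref{lemma:A}), exactly $|\mathrm{Hess}(^{(i_1\dots i_N)}\tilde\psi^{wk})_{|N\Crit(^{(i_1\dots i_N)}\tilde\psi^{wk})}|^{1/2}$ multiplied by the factor $|\tau_{i_1}\cdots\tau_{i_N}|^\kappa$ coming from the overall scaling $^{(i_1\dots i_N)}\tilde\psi^{tot}=\tau_{i_1}\cdots\tau_{i_N}\,^{(i_1\dots i_N)}\tilde\psi^{wk}$. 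Combined with Lemma \ref{lemma:kappa} giving $c^{(i_j)}+\sum_{r\le j}d^{(i_r)}-1\ge \kappa$, the residual factor $\prod|\tau_{i_j}|^{c^{(i_j)}+\sum d^{(i_r)}-1-\kappa}$ is bounded and continuous, and thus the pulled-back integrand is integrable on the whole of $\Crit(^{(i_1\dots i_N)}\tilde\psi^{wk})$ relative to the product Riemannian measure.

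Third, analyze the cutoff $(1-u_\epsilon)\circ\zeta$. Since $\zeta$ collapses the exceptional set $\{\sigma_{i_1}\cdots\sigma_{i_N}=0\}$ into the singular stratum of $\mathcal{C}$, which is contained in a neighborhood of $\Sing\Omega$ at the underlying spatial level, the cutoff is supported where all $|\tau_{i_j}|$ are bounded below by a function of $\epsilon$ tending to $0$. Meanwhile $(1-u_\epsilon)\circ\zeta\to 1$ pointwise on the open dense set $\{\sigma_{i_1}\cdots\sigma_{i_N}\ne 0\}$. Dominated convergence, applied with the dominating function just constructed in the previous paragraph, then shows that the limit \eqref{eq:MC} exists on each branch and equals the corresponding summand \eqref{eq:L} of $L_0$. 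Summing over all branches yields the identity.

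The main obstacle is the verification that the pulled-back surface measure and transversal Hessian combine correctly with the overall factor $\tau_{i_1}\cdots\tau_{i_N}$ from the weak/total transform decomposition, so that the $|\tau_{i_j}|^\kappa$ factor from $|\mathrm{Hess}|^{1/2}$ exactly cancels the first $\kappa$ powers in the Jacobian $\tilde\Phi_{i_1\dots i_N}$; this is where Lemma \ref{lemma:kappa} is essential to guarantee a non-negative residual exponent and hence integrability. Once this bookkeeping is in place, the existence of the limit and its identification with $L_0$ follow immediately, and as a byproduct one obtains the finiteness of the integral over $\Reg\mathcal{C}$ asserted in the surrounding theorem.
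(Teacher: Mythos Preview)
Your approach differs from the paper's and, while the underlying idea can in principle be made to work, the bookkeeping you sketch contains an imprecision that hides the main difficulty.

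The paper proceeds indirectly. For fixed $\eps>0$ it sets $I_\eps(\mu)=\int e^{i\psi/\mu}a(1-u_\eps)\,dX\,d\xi\,dx$ and computes its leading asymptotic coefficient in two ways: first, since $(1-u_\eps)$ vanishes near $\Sing\Omega$, the ordinary stationary phase theorem applies directly and yields $(2\pi\mu)^\kappa L_0(\eps)$ with $L_0(\eps)$ equal to the integral in \eqref{eq:MC}; second, running the entire desingularization machinery on $I_\eps(\mu)$ in place of $I(\mu)$ yields $(2\pi\mu)^\kappa$ times a sum of integrals of the form \eqref{eq:L} with $a$ replaced by $a(1-u_\eps)$. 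Uniqueness of the leading coefficient forces these two expressions to coincide, and then Lebesgue's theorem on the resolved side (where integrability has already been established) gives the limit as $\eps\to 0$. No explicit comparison of surface measures or Hessians is ever performed.

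Your direct route requires the identity
\[
\zeta^*\Big(\frac{d(\Reg\mathcal{C})}{|\det\psi''_{|N\Reg\mathcal{C}}|^{1/2}}\Big)
=\frac{\tilde\Phi_{i_1\dots i_N}}{|\tau_{i_1}\cdots\tau_{i_N}|^{\kappa}}\cdot
\frac{d\Crit(\,^{(i_1\dots i_N)}\tilde\psi^{wk})}{|\mathrm{Hess}\,^{(i_1\dots i_N)}\tilde\psi^{wk}_{|N\Crit}|^{1/2}}.
\]
But $\tilde\Phi_{i_1\dots i_N}$ is the Jacobian of $\zeta$ on the \emph{ambient} space $\rn\times\g$, not the Jacobian of its restriction to the critical submanifolds; and the determinant of the transversal Hessian depends on the choice of normal directions, which is distorted under $\zeta$ by a further Jacobian factor. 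So neither of your two separate pullback claims is correct as stated. What is true is that the \emph{combined} density $a\,dm/|\det\psi''|^{1/2}$ on the critical set is diffeomorphism-invariant --- this is exactly the coordinate-independence of the leading stationary-phase term --- and invoking that invariance is essentially equivalent to the paper's uniqueness argument. A genuinely direct proof would have to track both correction factors and check that they cancel, which is considerably more laborious than the paper's two-line trick.
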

\begin{proof}
With $u_\eps$ as in Equation \eqref{eq:u}, let us define
\bqn
I_\eps(\mu)=\int_{T^\ast \rn} \int_{\g}  e^{\frac{i}{\mu} \psi(x,\xi,X) }  [a(1-u_\eps)](x,\xi,X) \, dX \, d\xi \, dx.
\eqn
Since $(x,\xi,X) \in \Sing {\mathcal{C}}$ implies $ (x,\xi) \in \Sing \Omega$, a direct application of the generalized theorem of the stationary phase for fixed $\eps >0$ gives
\bq
\label{eq:asympt}
| I_\eps(\mu)- (2\pi \mu)^\kappa L_0(\eps) | \leq C_\eps \mu^{\kappa+1},
\eq
where $C_\eps>0$ is a constant depending only on $\eps$, and
\bqn
L_0(\eps)=  \int_{\Reg {\mathcal{C}}}\frac{ [a(1-u_\eps)] (x,\xi,X)}{|\det  \, \psi'' (x,\xi,X)_{|N_{(x,\xi,X)}\Reg {\mathcal{C}}} |^{1/2}} d(\Reg {\mathcal{C}})(x,\xi,X).
\eqn
On the other hand, applying our previous considerations to  $I_\eps(\mu)$ instead of $I(\mu)$, we obtain again an asymptotic expansion of the form \eqref{eq:asympt} for $I_\eps(\mu)$, where now, the first coefficient is given by a sum of integrals of the form \eqref{eq:L} with $a$ replaced by $a(1-u_\eps)$. Since the first term in the asymptotic expansion \eqref{eq:asympt} is uniquely determined, the two expressions for $L_0(\eps)$ must be identical. The statement of the lemma now follows by  the Lebesgue theorem on bounded convergence.
\end{proof}
Note that existence of the limit in  \eqref{eq:MC} has been established by  partially resolving the  singularities of the critical set $\mathcal{C}$, the corresponding limit being given by $L_0$. Let now $a ^+ \in \CT(\R^{2n} \times \g), R^+)$. Since $|u_\epsilon| \leq 1$, the lemma of Fatou implies that 
\bqn 
  \int_{\Reg \mathcal{C}} \lim_{\eps \to 0}  \frac{ [a^+ (1-u_\eps)] (x,\xi,X)}{|\det  \, \psi'' (x,\xi,X)_{|N_{(x,\xi,X)}\Reg \mathcal{C}} |^{1/2}} d(\Reg \mathcal{C})(x,\xi,X)
\eqn
is mayorized by the limit  \eqref{eq:MC}, with $a$ replaced by $a^+$. Lemma \ref{lemlim} then implies that
\bqn 
  \int_{\Reg \mathcal{C}} \frac{ a^+ (x,\xi,X)}{|\det  \, \psi'' (x,\xi,X)_{|N_{(x,\xi,X)}\Reg \mathcal{C}} |^{1/2}} d(\Reg \mathcal{C})(x,\xi,X) < \infty.
\eqn
Choosing now $a^+$ to be equal  $1$ on the compact set $K$  in which $a$ was supported, and applying the theorem of Lebesgue on bounded convergence to the limit \eqref{eq:MC}, we obtain Equation \eqref{eq:L0}. 
\end{proof}

\providecommand{\bysame}{\leavevmode\hbox to3em{\hrulefill}\thinspace}
\providecommand{\MR}{\relax\ifhmode\unskip\space\fi MR }
\providecommand{\MRhref}[2]{%
  \href{http://www.ams.org/mathscinet-getitem?mr=#1}{#2}
}
\providecommand{\href}[2]{#2}


\end{document}